\documentclass[11pt,reqno]{amsart}

\usepackage[hmargin=4cm,vmargin=4cm]{geometry}

\usepackage{amscd,amssymb}
\usepackage{hyperref}

\numberwithin{equation}{section}

\newtheorem {theorem}[equation]                   {Theorem}

\newtheorem {lemma}[equation]           {Lemma}
\newtheorem {proposition}[equation]     {Proposition}

\newtheorem {corollary}[equation]       {Corollary}
\theoremstyle{definition}
\newtheorem {definition}[equation]{Definition}
\newtheorem {remark}[equation]          {Remark}

\newtheorem {example}[equation]         {Example}
\usepackage{xcolor}

\newtheorem{thm}{Theorem}

\def\R{\mathbb{R}}
\newcommand{\pp}[2]{\frac{\partial#1}{\partial#2}}

%\textwidth=125mm
%\textheight=185mm
%\parindent=8mm
%\evensidemargin=0pt
%\oddsidemargin=0pt
%\frenchspacing

\begin{document}

\title[Existence and Classification of $b$-contact]{Existence and classification of $b$-contact structures}

\author{Robert Cardona}
\address{Robert Cardona, Departament de Matem\`atiques i Inform\`atica, Universitat de Barcelona, Gran Via, 585, 08007
Barcelona, Spain; Centre de Recerca Matemàtica, Campus de Bellaterra, Edifici C, 08193, Barcelona, Spain.}
\email{robert.cardona@ub.edu}
\thanks{R. Cardona thanks the LabEx IRMIA and the Universit\'e de Strasbourg for their support.}

\author{C\'edric Oms}\address{C\'edric Oms, Basque Center of Applied Mathematics, BCAM, Alameda de Mazarredo, 14, 48009 Bilbao, Bizkaia}\email{coms@bcamath.org}
\thanks{C. Oms acknowledges financial support from the Juan de la Cierva post-doctoral grant (grant number FCJ2021-046811-I). Both authors are partially supported by the ANR grant “CoSyDy” (ANR-CE40-0014), the AEI grant PID2019-103849GB-I00 / AEI / 10.13039/501100011033 and the AGAUR grant 2021 SGR 00603.}

\begin{abstract}
A $b$-contact structure on a $b$-manifold $(M,Z)$ is a Jacobi structure on $M$ satisfying a transversality condition along the hypersurface $Z$. We show that, in three dimensions, $b$-contact structures with overtwisted three-dimensional leaves satisfy an existence $h$-principle that allows prescribing the induced singular foliation. We give a method to classify $b$-contact structures on a given $b$-manifold and use it to give a classification on $S^3$ with either a two-sphere or an unknotted torus as the critical surface. We also discuss generalizations to higher dimensions.
\end{abstract}
\maketitle

%\tableofcontents 

\section{Introduction}

\maketitle

Rather new on the mathematical landscape of geometrical structures are the so-called \emph{$b$-contact} structures, introduced in \cite{MO1}. Those structures are the odd-dimensional counterpart to $b$-symplectic structures, which are a class of non-regular Poisson manifolds. The associated Poisson structure of a $b$-symplectic structure is non-degenerate (and thus symplectic) outside a hypersurface, called critical hypersurface, where it degenerates in a transverse manner. The induced Poisson structure on the hypersurface yields a codimension one symplectic foliation. The language of differential forms over the $b$-tangent bundle (first introduced by Melrose \cite{Me} and considered in a symplectic setting in \cite{NT}) is suitable for the description and study of those Poisson structures, called $b$-Poisson structures, as shown by the seminal work of Guillemin-Miranda-Pires \cite{GMP}. With this formalism come many advantages, as for instance Moser's path method, which still holds for $b$-symplectic forms. Since then, $b$-symplectic geometry has become an active field of research. The topology of $b$-symplectic manifolds, also called log-symplectic manifolds, has been studied by several authors \cite{cavalcanti,GMW, FMM, marcut}. In \cite{FMM} the existence problem on open manifolds is addressed using the $h$-principle. Another related class of singular symplectic structures is that of folded symplectic structures \cite{CGW}, whose existence, even on closed manifolds, can be established using the $h$-principle \cite{Can}.

On the contact side of this story, the study of the topological features of $b$-contact structures was initiated in \cite{MO1}, followed by an analysis of their associated Reeb dynamics \cite{MO2,MOP}. Once more, $b$-contact structures on a $b$-manifold $(M,Z)$ are contact outside the hypersurface $Z$, and the contact condition degenerates on the critical hypersurface. A natural way to express this degeneracy is by viewing them as Jacobi structures, which were introduced by Lichnerowicz \cite{Li} and Kirillov \cite{Kir} as generalizations of symplectic, Poisson, and contact structures. In the case of a $b$-contact form, the maximum wedge of the associated Jacobi structure {necessarily satisfies a certain transversality condition (see subsection \ref{subsec:Jacobi})}. The associated singular foliation yields leaves of maximal dimension outside the hypersurface where the induced structure is contact. On the critical hypersurface, an intricate interplay takes place between codimension one locally conformally symplectic leaves and codimension two contact leaves. As it happens with regular symplectic and contact manifolds, $b$-contact structures are induced on certain hypersurfaces of $b$-symplectic manifolds.

In this article, we study the existence and classification of $b$-contact structures on closed orientable three-manifolds. 
 To fix a $b$-manifold, we fix the ambient orientable closed manifold $M$ and the embedded hypersurface $Z$, which is always assumed to be closed but not necessarily connected. Since we will work with orientable $b$-manifolds, this immediately imposes that $Z$ is a separating hypersurface, that is for any connected component $Z'\subset Z$, $M\setminus Z'$ consists of two connected components (see the discussion in Section \ref{ss:bgeo}). Our existence theorems for $b$-contact structures work for a fixed $b$-manifold, which is stronger than the existence results in \cite{MO1}. There, the $b$-manifold cannot be fixed and it is shown that given a closed orientable surface in a closed $3$-manifold {$M$}, there exists a $b$-contact structure having a critical surface given by the union of two diffeomorphic copies of the surface. The second main contribution of this article is that we give a recipe that reduces the classification of $b$-contact structures on any $b$-manifold to ingredients from algebraic and contact topology. We apply this recipe to classify $b$-contact structures on the three-sphere with either a sphere or an unknotted torus as the critical surface. Our classification results for singular geometric structures can be compared with the seminal work of Radko \cite{R}, where a classification of $b$-Poisson structures on closed surfaces was obtained.

Recall that contact structures in dimension three fall into two different classes: overtwisted and tight. The study of the overtwisted ones is well understood. By the classical flexibility result proved by Eliashberg \cite{eli2}, overtwisted contact structures can be understood purely by the homotopy theory of plane fields. In particular, this yields existence and classification results of those structures on closed $3$-manifolds.
Tight contact structures lie on the other side of the spectrum: their study turns out to be way more intricate as they reflect the topology of the underlying manifold. They do not satisfy the $h$-principle and there are examples of $3$-manifolds that don't admit tight contact structures \cite{EH}. The list of manifolds where tight contact structures are classified is short (among others $S^3$, $\mathbb{R}^3$, $S^1\times S^2$ \cite{eli3}). The classification of tight contact structures in manifolds with boundary is for instance settled in the case of the $3$-disk \cite{eli3} or the solid torus \cite{H1,LZ}.

A $b$-contact structure on a manifold $M$ with an orientable critical hypersurface $Z$ is called fully overtwisted if {in each component of $M\setminus Z$ (which correspond to each leaf of maximal dimension of the associated Jacobi structure, see Lemma \ref{lem:dividing=contact})} we get an induced overtwisted contact structure. {Similarly, a $b$-contact structure is tight if in each component of $M\setminus Z$ we get an induced tight contact structure.} We prove that an existence $h$-principle holds for fully overtwisted $b$-contact structures on any fixed orientable $b$-manifold $(M,Z)$, which allows prescribing the singular foliation (as long as this foliation cannot arise for an obvious topological reason). 

{To give a precise statement, let us introduce a bit of terminology. Given a set of separating curves $\Gamma$ in an oriented surface $Z$, let $f$ be a defining function for $\gamma$ (that is $\Gamma$ is the regular zero-level set of $f$). This defines a splitting $Z=Z_+\cup Z_-$ of surfaces with boundary cutting open $Z$ along $\Gamma$, defined as the set where $f$ is non-negative (respectively non-positive). If we consider the defining function $-f$ instead, this leads to the same decomposition of $Z$ where $Z_+$ and $Z_-$ are swapped. For a given set of curves, there are only these two possible decompositions. \textit{From now on we take the following convention:} when we speak of a set of separating curves, we mean the curves together with a choice of $Z_+$ and $Z_-$. Futhermore, given a rank two vector bundle $\eta$ over a three-manifold, we denote by $e(\eta)\in H^2(M,\mathbb{Z})$ the Euler class of $\eta$ {and by $e_Z(\eta)$ its restriction to $H^2(Z,\mathbb{Z})$}. A $b$-isotopy is just an isotopy that preserves $Z$, the natural isotopies on $b$-manifolds, and $\chi$ denotes the Euler characteristic.}

\begin{thm}[Theorem \ref{thm:existencebcontacthprin}]\label{thma}
Let $(M,Z)$ be a closed oriented $b$-manifold of dimension three and let $\eta$ be a $b$-plane field. Then $\eta$ is homotopic through $b$-plane fields to a fully-overtwisted $b$-contact structure whose set of one-dimensional leaves is any collection of separating curves $\Gamma$ in $Z$ such that $\chi(Z_+)-\chi(Z_-)=\langle {e_Z}(\eta), Z\rangle$. Furthermore, if two fully-overtwisted $b$-contact structures have isotopic one-dimensional leaves and are homotopic relative to $Z$, then they are $b$-isotopic.
\end{thm}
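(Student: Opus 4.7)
The plan is to reduce the theorem to a local-to-global argument: construct a normal form for a $b$-contact structure in a tubular neighborhood of $Z$ realizing any prescribed multicurve $\Gamma$ as its set of one-dimensional leaves, and then appeal to Eliashberg's $h$-principle for overtwisted contact structures on each component of the complement $M\setminus Z$, relative to the boundary condition induced by this model. The topological constraint $\chi(Z_+)-\chi(Z_-)=\langle e(\eta),Z\rangle$ should appear naturally as the Poincar\'e--Hopf count controlling the unique obstruction to homotoping $\eta|_Z$ into the plane field prescribed by the normal form.

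For the existence statement, I would write an explicit $b$-contact form in a collar $(-\epsilon,\epsilon)\times Z$ with defining function $x$ of the form
$$\alpha=u\,\frac{dx}{x}+\beta,$$
where $u\colon Z\to\mathbb{R}$ vanishes transversely exactly along $\Gamma$ with sign compatible with the coorientation (so $\{u>0\}=Z_+$), and $\beta$ is a 1-form on $Z$ chosen so that the $b$-contact nondegeneracy $\alpha\wedge d\alpha\neq 0$ holds both on $Z_\pm$ (where it reduces to $u\,d\beta|_Z\neq 0$) and along $\Gamma$ (where it reduces to $\beta\wedge du\neq 0$, i.e.\ $\beta$ transverse to $\Gamma$). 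The zero locus of $u$ then carries the codimension-two contact leaves, while $Z_\pm$ become the lcs leaves. Obstruction theory on the rank-two bundle underlying $\eta|_Z$ identifies the unique obstruction to homotoping $\eta|_Z$ to the plane field of this model with the relative Euler class, which by a signed count of the characteristic singularities is exactly $\chi(Z_+)-\chi(Z_-)$. Once this homotopy is done, the $b$-plane field agrees with the normal form on a neighborhood $U$ of $Z$ and restricts to an honest plane field on each component of $M\setminus U$ with prescribed contact boundary behavior. Eliashberg's $h$-principle for overtwisted contact structures (rel boundary) on the open manifold pieces then supplies a fully-overtwisted extension, homotopic to $\eta$ rel boundary, which glues to the normal form on $U$ to produce the desired $b$-contact structure.

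For the uniqueness clause, given two fully-overtwisted $b$-contact structures with isotopic one-dimensional leaves that are homotopic rel $Z$, I would first use the isotopy of $\Gamma$ together with a $b$-contact analogue of Giroux's convex surface neighborhood theorem to identify them (up to $b$-contactomorphism) on a collar of $Z$. Eliashberg's uniqueness for overtwisted contact structures rel boundary then provides, on each complementary component, a homotopy through (genuine) contact structures with the fixed boundary condition. A $b$-Gray stability theorem, proved by a standard Moser-type argument adapted to the $b$-tangent bundle, finally upgrades this global homotopy to a $b$-isotopy.

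The main obstacle is the normal form/neighborhood theorem along $Z$: proving that a $b$-contact structure near a component of $\Gamma$ is determined up to $b$-contactomorphism by the cooriented curve is the $b$-contact counterpart of convex surface theory and requires careful control of how the codimension-one lcs leaves in $Z_\pm$ interact with the codimension-two contact leaves making up $\Gamma$. By comparison, the $b$-Gray stability and the obstruction-theoretic computation of the Euler class term should be essentially routine once the correct local models are in place.
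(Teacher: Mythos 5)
Your proposal follows essentially the same route as the paper: a semi-local $\mathbb{R}$-invariant model $u\,\frac{dz}{z}+\beta$ near $Z$ realizing the prescribed dividing set (the paper constructs this via a Morse function with zero set $\Gamma$ and the results of \cite{CO}), the identification of the Euler class $\chi(Z_+)-\chi(Z_-)$ as the sole obstruction to homotoping $\eta|_Z$ into this model, Eliashberg's relative $h$-principle on the components of $M\setminus Z$, and for uniqueness the combination of the dividing-set neighborhood theorem, the parametric overtwisted $h$-principle rel boundary, and $b$-Gray stability. The ``main obstacle'' you flag is resolved in the paper exactly as you anticipate, by showing every $b$-contact structure is convex near $Z$ and that $\mathbb{R}$-invariant $b$-contact forms correspond to $\mathbb{R}$-invariant contact forms, so the proposal is correct and matches the paper's argument.
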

 The theorem above admits an interpretation in terms of Jacobi geometry: on a three-manifold $M$ with a given separating hypersurface $Z$, any formal $b$-Jacobi structure (see Definition \ref{def:formalJacobi}) is homotopic to a genuine Jacobi structure $(\Lambda,R)$ such that $\Lambda \wedge R \pitchfork 0$ (i.e. a $b$-Jacobi structure), vanishing exactly along $Z$, with prescribed singular foliation. Indeed, as we will see, the condition on the set of one-dimensional leaves is necessary for the existence of a $b$-contact structure homotopic to $\eta$. 
 
 The proof is based on using the framework of $b$-geometry to apply techniques from contact topology, starting with a semi-local study around the critical surface of the $b$-manifold and an application of Eliashberg's relative $h$-principle for contact structures. Note that since the $b$-manifold is fixed, the singular locus of the $b$-contact structure (and of the associated Jacobi structure) is fixed as well. Closely related results are the existence $h$-principle for $b$-symplectic forms on open manifolds \cite{FMM}, and the existence $h$-principle (on closed manifolds) for folded symplectic forms, a geometric structure that degenerates as well along a critical hypersurface, established by Cannas da Silva \cite{Can}. In both cases, however, the singular locus cannot be prescribed, which contrasts with the results on $b$-contact structures in the present paper.

To the authors' knowledge, this is the first $h$-principle type existence result for Jacobi structures in dimension three that are not transitive (i.e. different from contact structures). {The simplest regular Jacobi structures} are contact structures and conformally symplectic structures, both of which satisfy an existence $h$-principle \cite{bem, BM}. In general, regular Jacobi structures can be interpreted as foliations of constant rank with contact or conformally symplectic leaves. Their flexible behavior has been recently studied by several authors, both in open manifolds \cite{DM, FF} and in closed ones \cite{CPP, bem, GT}. \\

 Using recent advances in the theory of high-dimensional convex hypersurfaces in contact topology, we prove a weaker existence theorem for high-dimensional $b$-contact structures. This one does not allow us to prescribe the singular foliation.
\begin{thm}[Theorem \ref{thm:HDcontactexist}]
{Let $(M,Z)$ be a closed oriented $b$-manifold of dimension $2n+1\geq 3$ such that $Z$ is separating. Then any formal $b$-contact structure is homotopic to a fully overtwisted $b$-contact structure.}
\end{thm}
 In this context, the ``formal" initial data is not just a $b$-hyperplane field, but what we call a formal $b$-contact structure. This formal structure exists on an odd-dimensional $b$-manifold with a separating hypersurface if and only if the manifold is stable almost complex. This implies that any stable almost complex manifold admits a $b$-Jacobi structure with any critical separating hypersurface $Z$. \\

Our first two theorems tackle the existence of $b$-contact structures, and our work continues by studying the classification of $b$-contact structures. This means classifying the singular foliations associated with $b$-Jacobi structures, and for each foliation determining the $b$-isotopy classes of $b$-contact structures. We give a method to achieve such classification results on a given $b$-manifold and apply it in two cases. In the paradigmatic case of $S^3$ with a two-sphere as the singular hypersurface, we obtain the following classification result up to $b$-isotopy.

\begin{thm} [Theorem \ref{thm:bclassificationS3S2}]\label{thm:b}
The classification of positive $b$-contact structures in $(S^3,S^2)$, endowed with some orientation, up to $b$-isotopy is the following.
\begin{itemize}
\item  There exists only one tight $b$-contact structure on $(S^3,S^2)$. {O}n the critical surface there is a single one-dimensional leaf.
\item {$b$-contact structures that are overtwisted in one connected component of $S^3\setminus S^2$ but tight in the other one are determined by specifying which side of the equator is the tight component (i.e. a sign), and by the homotopy class of the $b$-plane field in the closure of the overtwisted component relative to $S^2$ (i.e. by an element in $\mathbb{Z}$: the Hopf invariant)}. On the critical surface, there is a single one-dimensional leaf.
\item  For each $\Gamma$ isotopy class of separating closed curves in $S^2$ satisfying $\chi(S^2_+)=\chi(S^2_-)$, fully overtwisted $b$-contact structures inducing $\Gamma$ as the set of one-dimensional determined by {the homotopy class of the $b$-plane field in the closure of each component of $S^3\setminus S^2$ relative to $S^2$ (i.e. by an element in $\mathbb{Z}^2$: the Hopf invariant of the $b$-plane field in each component)}.
\end{itemize}
\end{thm}
It is implied in the statement that only those isotopy classes of separating curves satisfying $\chi(S^2_+)=\chi(S^2_-)$ can arise as the set of one-dimensional leaves of a $b$-contact structure on $(S^3,S^2)$, and can easily be characterized combinatorically (cf. Lemma \ref{lem:tree}). A classification as above can be carried out as well when the singular locus in $S^3$ is given by an unknotted $2$-torus, see Theorem \ref{thm:classS3T2}. 

The proof of Theorem \ref{thm:b} is obtained by reducing the classification of $b$-contact structures on a given $b$-manifold to algebraic topology and known results in contact topology. Indeed, the recipe of our proof holds for any $b$-manifold $(M,Z)$. The required ingredients to classify $b$-contact structures on a given $b$-manifold are: a classification of the admissible sets of one-dimensional leaves, the classification of relative homotopy classes of plane fields of the connected components of $M\setminus Z$, and the classification of tight contact structures with convex boundary on the connected components of $M\setminus Z$ (viewed as contact manifolds with boundary). In both cases that we study, the classification of tight contact structures of the completion of the connected components of $M \setminus Z$ is available in the literature (see \cite{eli3, H1, LZ}). {However, the classification of tight contact structures in general is still an open question.}\\

Last but not least, our study of the singular hypersurface of a $b$-contact manifold of any odd dimension shows that it always admits a smooth family of contact submanifolds {of codimension two in $M$}. This is interesting on its own since we show that the Reeb vector field of any fixed $b$-contact form restricts to a contact Reeb vector field in each submanifold of this family. As a corollary, we show that the Weinstein conjecture on the existence of periodic Reeb orbits holds for compact $b$-contact manifolds in dimension $5$ along the critical hypersurface. This generalizes a three-dimensional result proved in \cite{MO2} and can potentially give rise to new periodic orbits in problems in celestial mechanics. 

\begin{remark}
 As was communicated to the authors, \'Alvaro del Pino and Aldo Witte also proved the existence of these codimension two contact submanifolds invariant by the $b$-Reeb field of any $b$-contact form (Proposition \ref{prop:codim2contactleaf}). They also show the $\mathbb{R}$-invariance of a $b$-contact structure (Proposition \ref{prop:bconvex}) using a different approach. We would like to thank them for sharing {their results before their paper} \cite{dPW} {was available}.
\end{remark}

\subsection*{Organization of the article}

We start in Section \ref{sec:prel} by giving necessary background information on contact geometry, an introduction to the $b$-tangent bundle and $b$-contact geometry, and $Z$-immersions. In Section \ref{sec:bconv}, we study convexity on $b$-contact structures in greater detail and prove that any $b$-contact structure is convex. Section \ref{sec:fot} contains the proof of Theorem \ref{thma}. The classification of $b$-contact structures is carried out in Section \ref{sec:class}. We finish the article by studying higher dimensional $b$-contact manifolds in Section \ref{sec:Reeb}. We give a weaker existence theorem of fully overtwisted $b$-contact structures in higher dimensions and analyze the existence of periodic Reeb orbits on the critical set.

\subsection*{Acknowledgments}
The authors warmly thank Eva Miranda for proposing the study of the $h$-principle for $b$-contact structures. We would like to thank Alfonso Giuseppe Tortorella and Álvaro del Pino for useful discussions. We are thankful to the anonymous referees for several corrections and suggestions that improved the quality of the paper.

\section{Preliminaries}\label{sec:prel}

\subsection{Contact topology}

A contact structure $\xi$ on an orientable manifold $M$ of dimension $2n+1$ is a nowhere integrable hyperplane field. We assume that the hyperplane field is cooriented so that $\xi$ can be expressed as the kernel of some global one form $\alpha \in \Omega^1(M)$. The non-integrability condition is equivalent to $\alpha \wedge (d \alpha)^n \neq 0$. In three dimensions, an overtwisted disk is a disk embedded in a contact manifold $(M, \xi)$ such that $\xi$ is tangent to the boundary of the disk. {There is also a high-dimensional definition of an overtwisted disk whose precise definition is more involved. We will only say that it is a codimension one-disk with a certain germ of contact structure.} A contact manifold containing an overtwisted disk is called overtwisted, and if such a disk does not exist we say that the contact structure is tight. The dichotomy between overtwisted and tight contact structures has been an important landmark in the field of contact topology. It is known that in all odd dimensions, the high dimensional generalization of overtwisted contact structures satisfy a full $h$-principle \cite{eli2,bem}.

Given a hypersurface $\Sigma$ embedded in a contact manifold $(M,\xi)$, the contact structure induces a singular $1$-dimensional foliation on $\Sigma$, called the characteristic foliation and denoted $\Sigma_\xi$. In three dimensions, it is defined as $\xi\cap T\Sigma$, and the singular points of the foliation are exactly those points $x\in \Sigma$ where $\xi$ is tangent to $T\Sigma$. For more details concerning the characteristic foliation, we refer to \cite[Section 4.6.1]{Ge}. Convex surfaces are a powerful tool to study $3$-dimensional contact manifolds and were introduced in \cite{G1}. The higher dimensional equivalent has been initiated by the work of \cite{HH}, see also \cite{EP}.

An embedded hypersurface $\Sigma \subset (M,\xi)$ in a contact manifold is convex if there exists a contact vector field $X$ (that is a vector field that preserves the contact structure) that is transverse to $\Sigma$. An equivalent characterization is that there is a neighborhood $U\cong \Sigma \times (-\varepsilon,\varepsilon)$ of $\Sigma$ where $\xi$ is defined by a contact form that is invariant with respect to a coordinate in $(-\varepsilon,\varepsilon)$. Such a neighborhood is called $\R$-invariant and the contact form is called $\R$-invariant. In dimension $3$, convex surfaces are $C^\infty$-generic, as shown by Giroux \cite{G1}.
\begin{theorem}\label{thm:giroux}
Let $\Sigma \subset (M,\xi)$ be a surface. Then there exists a $C^\infty$-small isotopy that makes $\Sigma$ convex.
\end{theorem}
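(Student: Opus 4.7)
The plan is to follow Giroux's original strategy \cite{G1} via the characteristic foliation $\Sigma_\xi$. Recall the key analytic characterization: $\Sigma$ is convex if and only if $\Sigma_\xi$ is \emph{divided}, i.e.\ there exists a multicurve $\Gamma\subset\Sigma$ cutting $\Sigma$ into two pieces $R_\pm$, a vector field $X$ directing $\Sigma_\xi$, and an area form $\omega$ on $\Sigma$, such that $X$ points transversally out of $R_+$ along $\Gamma$ and $\pm\operatorname{div}_\omega X>0$ on $R_\pm$. The goal therefore reduces to producing, after a $C^\infty$-small isotopy of $\Sigma$, a divided characteristic foliation.

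First, I would perturb $\Sigma$ by a $C^\infty$-small isotopy to bring $\Sigma_\xi$ into Morse--Smale position: all singularities hyperbolic (elliptic sources/sinks and saddles), all closed leaves hyperbolic, no saddle connections between singularities of the same sign. This is a standard transversality argument in the space of embeddings $\Sigma\hookrightarrow M$, using that the characteristic foliation depends smoothly on the embedding and that Morse--Smale is a generic condition among line fields on surfaces. The sign of each elliptic singularity and of each hyperbolic closed leaf is read off from the local orientation comparison between $\xi$ and $T\Sigma$.

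Second, on the resulting generic $\Sigma_\xi$, I would construct $\Gamma$ combinatorially. Each elliptic singularity lies in a small disk on which the foliation is radial with a definite sign; the complement of these disks decomposes into rectangles running along saddle separatrices and annular neighborhoods of closed leaves, on which the dynamics are explicit. One then chooses $\Gamma$ to separate positive from negative elliptic points and closed leaves, crossing each saddle-connecting arc transversally once and each hyperbolic closed leaf once. The absence of same-sign saddle connections guarantees this choice is coherent across the surface. Picking a compatible $\omega$ and $X$ then gives the divergence sign condition.

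Finally, I would exhibit an $\R$-invariant contact model in a tubular neighborhood $U\cong \Sigma\times(-\varepsilon,\varepsilon)$ of the perturbed $\Sigma$, from which the transverse contact vector field is read off. Set $\beta=\iota_X\omega$, so that $\ker\beta=\Sigma_\xi$ and $d\beta=(\operatorname{div}_\omega X)\,\omega$. Choose $u\in C^\infty(\Sigma)$ vanishing transversally along $\Gamma$ with $\pm u>0$ on $R_\pm$. Then $\alpha=\beta+u\,dt$ is contact on $U$ and invariant under $\partial_t$, so $\partial_t$ is a transverse contact vector field for $\ker\alpha$; a standard Moser/Gray argument identifies $\ker\alpha$ with $\xi$ near $\Sigma\times\{0\}$ after a further $C^\infty$-small isotopy, completing the proof. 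The main obstacle I expect is the second step: verifying that the Morse--Smale structure allows a globally consistent dividing curve compatible with the divergence signs, and in particular handling the hyperbolic closed leaves, whose attracting/repelling type must match the chosen sign of the region they bound.
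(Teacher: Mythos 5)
The paper does not prove this statement: Theorem~\ref{thm:giroux} is quoted as background from Giroux's work \cite{G1} and used as a black box, so there is no internal argument to compare yours against. Your sketch is a faithful outline of Giroux's original proof (reduction to the existence of a dividing set for $\Sigma_\xi$, genericity of the characteristic foliation, combinatorial construction of $\Gamma$ from the elliptic/hyperbolic decomposition, and the $\mathbb{R}$-invariant model $\alpha=\beta+u\,dt$ followed by a Gray-type identification near $\Sigma$). The architecture and the final verification are correct.

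The one place where you understate the difficulty is the first step. Putting $\Sigma_\xi$ in Morse--Smale position is \emph{not} a jet-transversality statement about embeddings: hyperbolicity of closed leaves and the absence of saddle connections are global dynamical conditions, and their genericity among vector fields on a closed orientable surface is Peixoto's density theorem, a genuinely dynamical input. Moreover, genericity among \emph{vector fields} does not automatically transfer to genericity among \emph{embeddings}: you need Giroux's realization (flexibility) lemma, asserting that every singular foliation $C^\infty$-close to $\Sigma_\xi$ and divided by the same data is the characteristic foliation of a $C^\infty$-close perturbation of $\Sigma$ --- equivalently, that the map from embeddings to characteristic foliations is open onto a neighborhood in the relevant sense. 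Without that lemma the phrase ``standard transversality argument in the space of embeddings'' does not close the step. With Peixoto plus the realization lemma inserted there, the rest of your outline (including the care you flag about matching the attracting/repelling type of hyperbolic closed leaves to the sign of the adjacent regions) goes through as in \cite{G1}.
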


The set $\Gamma \subset \Sigma$ defined by $\Gamma:= \{x \in \Sigma \enspace |\enspace X_x \in \xi \}$, where $X$ is the contact vector field transverse to $\Sigma$, is called the \emph{dividing set} of the convex contact surface. It follows from the contact condition that $\Gamma$ is an embedded (closed) hypersurface and that $\Sigma \setminus \Gamma$ is equipped with an exact symplectic form. The dividing set contains the contact topological information of a neighborhood around the convex hypersurface. {Let us finish by pointing out that Theorem \ref{thm:giroux} holds in higher dimensions if we allow the isotopy to be $C^0$-small only \cite{HH, EP}.}

%\subsection{$Z$-immersions}

%We introduce in this section another %successful application of the $h$-principle, %namely for \emph{$Z$-immersions} as %introduced by Eliashberg in \cite{eli}.

%\begin{definition}
%	Let $Z \subset M$ be a hypersurface.
%	A map $f:M \to N$ is called a $Z$-immersion if $f|_{M\setminus Z}$ and $f|_Z$ are immersions and there exists a tubular neighborhood $\mathcal{N}(Z)$ of $Z$ and an involution $\tau:\mathcal{N}(Z)\to \mathcal{N}(Z)$ such that $f|_{\mathcal{N}(Z)}\circ \tau=f|_{\mathcal{N}(Z)}$.
%\end{definition}

%Let $f:M\to N$ be a $Z$-immersion. In a neighborhood around a point $p \in Z$, there exists coordinates around $p$ and around $f(p)$ such that $f(x_1,x_2,\dots, x_n)=(x_1^2,x_2,\dots,x_n)$. If $Z$ is closed and orientable, there exists a semi-local neighborhood $\mathcal{N}(Z)$ around $Z$ with a normal coordinate $t$ such that that $f$ writes down in $\mathcal{N}(Z)$
%$$f(t,z)=(t^2,z).$$
%The concept of separating hypersurface will play a role in the existence theorem proven by Eliashberg.
%\begin{definition}
%	We call a hypersurface $Z$ that splits $M$ in two (possibly disconnected) manifolds with common boundary $Z$ a \emph{separating} hypersurface.
%\end{definition}
%Finally, we state the main result on $Z$-immersions that we will use in this work, which is just a particular case of a more general statement \cite[Theorem B]{eli}.
%\begin{theorem}[\cite{eli}]\label{eliashbergthm}
 % Let $M$ be a three-dimensional closed manifold and $Z$ a closed separating hypersurface. Then there exists a $Z$-immersion $M\to \mathbb{R}^3$.
%\end{theorem}

\subsection{$b$-geometry}\label{ss:bgeo}

The language of $b$-differential forms was initiated by Melrose \cite{Me} in his famous work on the Atiyah-Patodi-Singer index theorem, and was later on used in the realm of Poisson geometry by Guillemin-Miranda-Pires \cite{GMP}. We give here a brief outline of the language of $b$-geometry, for more details see \cite{Me,GMP}. Given a hypersurface $Z$, a $b$-vector field is a vector field on $M$ tangent to $Z$. The hypersurface is called the critical hypersurface, or also sometimes the singular locus. The space of $b$-vector fields corresponds to the space of sections of a vector bundle on $M$ called the $b$-tangent bundle and denoted by $^bTM$ {(this is a consequence of the Serre-Swan theorem, see \cite[Section 2.2]{Me})}. It admits an \emph{anchor map} $\pi:{^bTM}\longrightarrow TM$ that is an isomorphism except along $Z$, where it has a one-dimensional kernel. Over $Z$, the line bundle given by this kernel has a canonical non-vanishing section (\cite[Proposition 4]{GMP}). The dual of the $b$-tangent bundle denoted ${^b}T^*M$ is called \emph{$b$-cotangent bundle}. Sections of the bundle $\bigwedge^k {^b}T^*M$ are called $b$-forms and denoted by ${^b}\Omega^k(M)$. Given a local defining function $z$ of the critical set $Z$, that is a function $z:U\to \R$ defined in a neighborhood $U=Z\times (-\varepsilon,\varepsilon)$ of $Z$ whose zero level-set is regular and given by $Z$, a $b$-form $\omega\in {^b}\Omega^k(M)$ can be written in $U$ as
$$\omega=\alpha\wedge\frac{dz}{z}  + \beta,$$
where $\alpha \in \Omega^{k-1}(U)$ and $\beta\in \Omega^k(U)$ are smooth differential forms (in particular $\beta$ does not have a $dz$ term). Any smooth form defines a $b$-form as well. Near $Z$ a smooth form $\gamma$ can be written as $\gamma_1\wedge dz + \gamma_2$, with $\iota_{\frac{\partial}{\partial z}}\gamma_2=0$. Then $\gamma$ corresponds to the $b$-form $z\gamma_1\wedge \frac{dz}{z}+\gamma_2$.

The differential of a $b$-form extends the usual exterior derivative and is defined in a neighborhood of $U$ by
$$d\omega:= d\alpha \wedge \frac{dz}{z} +d\beta.$$
However, observe that in general $d\beta$ is $b$-form, and not a smooth form.  Indeed, if $\beta$ does depend on the $z$ coordinate, the exterior derivative of $\beta$ splits again as $d\beta= d_Z\beta - z \cdot \dot{\beta}\wedge \frac{dz}{z},$ where $\dot{\beta}$ denotes the derivative of $\beta$ with respect to $z$ and $d_Z\beta$ the derivative in the directions tangent to $Z$.

We call a closed coorientable hypersurface $Z$ (possibly with several connected components) that splits $M$ into two (possibly disconnected) disjoint manifolds with common boundary $Z$ a \emph{separating} hypersurface. Given a separating critical hypersurface on an orientable manifold, there exists a non-vanishing $b$-form of maximal degree, a $b$-volume form. {Indeed, as the manifold is orientable, it admits a volume form $\Omega$ and as the critical hypersurface is orientable, it is given by the regular zero-level set of a smooth function $f$. The $b$-volume form is then given by the $b$-form $\frac{\Omega}{f}$.} We say that a manifold admitting a $b$-volume form is an orientable $b$-manifold. Having fixed an orientation of $TM$, in each component of $M\setminus Z$ (where $^bTM$ coincides with $TM$), the $b$-volume form induces either the same or the opposite orientation than the one chosen in $TM$. Hence a way to think of an orientation induced by the $b$-volume form of $(M,Z)$ given by $\frac{1}{f}\Omega$, where $\Omega$ is a volume form on $M$ and $f$ is a smooth function vanishing transversally along each connected component of $Z$. This thus induces a sign on each component of $M\setminus Z$, and every two components that share a common connected component of $Z$ are necessarily of different sign. Since we will always work with an orientable ambient manifold $M$, a $b$-manifold $(M,Z)$ is orientable if and only if $Z$ is separating. 
Furthermore, we adopt the convention that the orientation on the separating hypersurface $Z$ induced by the $b$-volume form is given by the smooth volume form $\Omega_Z$ such that $\frac{dz}{z}\wedge \Omega_Z$ induces the given orientation on the $b$-manifold.

In general, for $Z$ separating, we have a stable isomorphism between $^bTM$ and $TM$, see e.g. \cite[Lemma 2.1]{Can} {and \cite[Proposition 11.1.1]{klaasse}}, which is furthermore canonical up to homotopy \cite{CGW}.

\begin{lemma}\label{lem:stabl}
Let $M$ be a $m$-dimensional manifold with separating hypersurface $Z$. Then there is an isomorphism, canonical up to homotopy, of real vector bundles
$$ TM\oplus \mathbb{R}\cong {}^bTM\oplus \mathbb{R}  $$
\end{lemma}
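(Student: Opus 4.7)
The plan is to construct $\Phi\colon TM\oplus \R \to {}^bTM\oplus \R$ explicitly using a tubular collar of $Z$, patched with the canonical identification off the collar. The stabilizing $\R$-summand is essential in order to absorb a sign obstruction: in a collar, the map $\pi^{-1}\colon TM\to {}^bTM$ is multiplication by $1/z$ in the normal direction, and $z$ has different signs on the two sides of $Z$, which would obstruct a naive extension of $\pi^{-1}$ across $Z$.

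Since $Z$ is separating, fix a global defining function $z\colon M\to \R$ with $M_\pm = \{\pm z>0\}$ and a collar $U = Z\times(-\varepsilon,\varepsilon)$ in which $z$ is the normal coordinate, giving the local splittings $TM|_U = TZ\oplus \R\langle\partial_z\rangle$ and ${}^bTM|_U = TZ\oplus \R\langle z\partial_z\rangle$, where $z\partial_z|_Z$ is the canonical generator of $\ker\pi|_Z$. Let $e$ span the stabilizing $\R$-summand. In these bases, the natural map $\pi^{-1}\oplus\mathrm{id}$ off $Z$ has matrix $\mathrm{diag}(1/z,1)$ on the $2$-plane $\langle \partial_z,e\rangle\to\langle z\partial_z,e\rangle$, whose determinant has opposite signs on $M_\pm$; this is the obstruction to be resolved.

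Define $\Phi$ piecewise. Off $U$, set $\Phi = \pi^{-1}\oplus\mathrm{id}$ on $M_+\setminus U$ and $\Phi = \pi^{-1}\oplus(-\mathrm{id})$ on $M_-\setminus U$, the latter composing the canonical identification off $Z$ with the global automorphism of $TM\oplus\R$ that negates the stabilizing factor. Over $U$, let $\Phi$ act as the identity on the $TZ$-summand and, on the $2$-plane $\langle \partial_z,e\rangle\to\langle z\partial_z,e\rangle$, by the matrix
\[
M(z) \;=\; R(\theta(z))\cdot \mathrm{diag}(g(z),1),
\]
where $R(\theta)\in SO(2)$ is the standard rotation, $g\colon[-\varepsilon,\varepsilon]\to\R_{>0}$ is smooth and equal to $1/|z|$ near $\partial U$, and $\theta\colon[-\varepsilon,\varepsilon]\to[0,\pi]$ is smooth with $\theta\equiv 0$ near $z=\varepsilon$ and $\theta\equiv\pi$ near $z=-\varepsilon$. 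A direct computation gives $M(\varepsilon) = \mathrm{diag}(1/\varepsilon,1)$ and $M(-\varepsilon) = \mathrm{diag}(-1/\varepsilon,-1)$, which in the local bases are precisely the matrices of the off-$U$ pieces near $\partial U$; so $\Phi$ is smooth across $\partial U$, and $\det M(z) = g(z)>0$ ensures pointwise invertibility, making $\Phi$ a global bundle isomorphism.

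The main obstacle is that such a smooth path $M(z)$ in $GL_2^+$ exists in the first place: without stabilization the endpoints would be $\pm 1/\varepsilon\in GL_1(\R)$, lying in opposite components of $GL_1$ and hence unjoinable through invertibles, whereas after stabilization the rotation $R(\pi)$ connects $\mathrm{diag}(1/\varepsilon,1)$ to $\mathrm{diag}(-1/\varepsilon,-1)$ inside the path-connected $GL_2^+$, absorbing the sign flip of $z$. Canonicity up to homotopy follows because the admissible data---collar, defining function, path $\theta$, and smoothing $g$---forms a contractible space, and any two resulting isomorphisms preserve the compatible pair of orientations on $TM\oplus \R$ and ${}^bTM\oplus \R$, hence differ by an element of the path-connected space of orientation-preserving bundle automorphisms.
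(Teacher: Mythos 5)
The paper does not actually prove this lemma; it imports it from the references (\cite[Lemma 2.1]{Can} for the stable isomorphism and \cite{CGW} for canonicity), so the comparison is with those sources rather than with an in-text argument. Your construction of the isomorphism itself is correct and is essentially the standard one: identify $TM$ and ${}^bTM$ off $Z$ via $\pi^{-1}$, observe that in the collar the map is $\mathrm{diag}(1/z,1)$ on the stabilized normal $2$-plane, note that the sign of $\det$ flips across $Z$, and absorb the flip by a rotation by $\pi$ in the stabilized plane, matching $\pi^{-1}\oplus(-\mathrm{id})$ on the negative side. The gluing conditions you check at $\partial U$ are the right ones and they do hold, and the observation that stabilization is exactly what makes the two endpoints joinable inside $GL_2^+$ is the heart of the matter. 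This is the same clutching idea used in the cited references, so for the existence of the isomorphism your proof is fine.

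The canonicity paragraph, however, has two genuine problems. First, the closing claim that two isomorphisms inducing the same orientations ``differ by an element of the path-connected space of orientation-preserving bundle automorphisms'' is false: the group of orientation-preserving automorphisms of a vector bundle is a gauge group whose $\pi_0$ is in general nontrivial (already $\mathrm{Maps}(S^1,GL_2^+(\R))$ has $\pi_0\cong\mathbb{Z}$), so preserving orientations does not imply homotopic. Second, the correct mechanism --- contractibility of the space of auxiliary data --- is not quite available as you state it, because the data is not all contractible: the space of defining functions $z$ has two components (differing by an overall sign), and swapping them replaces $\Phi$ by an isomorphism that differs from it over $M_+\setminus U$ by $\mathrm{id}\oplus(-\mathrm{id})$, which has negative determinant and hence cannot be homotopic to the original through isomorphisms. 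Likewise the choice of rotating by $+\pi$ rather than $-\pi$ is a discrete convention whose independence you neither fix nor verify; restricting $\theta$ to $[0,\pi]$ makes that slice of data convex, but you should say explicitly that the homotopy class is canonical only after fixing a coorientation of $Z$ (equivalently the labeling $M_\pm$, which the separating hypothesis supplies but does not single out) and the sense of rotation, and then run the contractibility argument on the remaining (genuinely contractible) choices of collar, positive rescaling of $z$, $g$, and $\theta$. With those conventions pinned down the argument closes; without them, ``canonical up to homotopy'' is not yet established.
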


We will be interested in plane fields in the $b$-tangent bundle of ${^b}TM$.

\begin{definition}\label{def:bplanefield}
A $b$-\emph{hyperplane field} on a $b$-manifold is a co-rank $1$ sub-bundle of the $b$-tangent bundle.

\end{definition}

In the $3$-dimensional set-up, when the critical hypersurface $Z\subset M$ is separating, the previous lemma implies that there always exists $b$-plane fields on $M$.

\begin{proposition}\label{prop:TM=bTM}
Let $Z\subset M^3$ be a separating surface. Then
\begin{enumerate}
\item ${^b}TM$ is isomorphic to $TM$,
\item there exists a non-vanishing section of ${^b}TM$ and therefore also a $b$-plane field on $M$.
\end{enumerate}
\end{proposition}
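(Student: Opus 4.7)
My plan is to reduce both claims to showing that ${}^bTM$ is a trivial rank $3$ real vector bundle, exploiting the stable isomorphism of Lemma \ref{lem:stabl} together with low-dimensional obstruction theory.

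Because $Z$ is separating, $(M,Z)$ admits a $b$-volume form, hence ${}^bTM$ is orientable. The ambient closed orientable $3$-manifold $M$ is parallelizable by the classical theorem of Stiefel, so $TM$ is isomorphic to the trivial bundle of rank $3$. Lemma \ref{lem:stabl} then gives
\[
{}^bTM\oplus\underline{\mathbb{R}}\;\cong\;TM\oplus\underline{\mathbb{R}}\;\cong\;\underline{\mathbb{R}}^{4},
\]
so ${}^bTM$ is stably trivial and in particular all of its Stiefel--Whitney classes vanish. I would then run obstruction theory for the oriented orthonormal frame bundle of ${}^bTM$, a principal $SO(3)$-bundle over the $3$-dimensional CW complex $M$: the successive obstructions to a global trivialization live in $H^{k+1}\bigl(M;\pi_{k}(SO(3))\bigr)$, and from $\pi_{0}(SO(3))=0$, $\pi_{1}(SO(3))=\mathbb{Z}/2$ and $\pi_{2}(SO(3))=0$ the only potentially non-zero one is $w_{2}({}^bTM)\in H^{2}(M;\mathbb{Z}/2)$, which we have just seen vanishes. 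Since $\dim M=3$ no further obstructions occur, so ${}^bTM$ is trivializable; in particular ${}^bTM\cong TM$, which is (1). For (2), a trivialization ${}^bTM\cong M\times\mathbb{R}^{3}$ supplies a nowhere vanishing section (any constant coordinate direction), whose orthogonal complement with respect to any fibre metric is a rank-$2$ subbundle, i.e.\ a $b$-plane field in the sense of Definition \ref{def:bplanefield}.

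The only substantive step is the degree-$2$ obstruction. Notice that one cannot hope to produce the isomorphism ${}^bTM\cong TM$ directly by modifying the canonical projection $\pi\colon{}^bTM\to TM$: on a tubular neighborhood $U\cong Z\times(-\varepsilon,\varepsilon)$ of $Z$ the natural identification $z\,\partial_{z}\mapsto\partial_{z}$ disagrees with $\pi$ by a factor of $z$ off $Z$, and smoothly interpolating between $\lambda=1$ and $\lambda=z$ across $\{z=0\}$ without zeros is impossible because of the sign change. This is precisely where separability of $Z$ has to be used, and it is absorbed cleanly into Lemma \ref{lem:stabl}.
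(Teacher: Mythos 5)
Your proof is correct. It shares with the paper the key input, namely the stable isomorphism ${}^bTM\oplus\mathbb{R}\cong TM\oplus\mathbb{R}$ of Lemma \ref{lem:stabl}, but the destabilization is carried out differently. The paper cancels the stabilization by citing Lemma 3.7 of \cite{eli} to conclude ${}^bTM\cong TM$ directly, and then obtains the non-vanishing section separately from $\chi(M)=0$ via Hopf's theorem. You instead first trivialize $TM$ by Stiefel's parallelizability theorem for closed orientable $3$-manifolds, deduce that ${}^bTM$ is stably trivial, and then destabilize by hand with obstruction theory for the oriented frame bundle: the only possible obstruction is $w_2\in H^2(M;\mathbb{Z}/2)$ (since $\pi_0(SO(3))=\pi_2(SO(3))=0$), and it vanishes by stability of Stiefel--Whitney classes. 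This is slightly stronger and more self-contained: it establishes outright that ${}^bTM$ is trivial (a fact the paper also records, as the remark preceding its proof), from which both assertions (1) and (2) follow at once, and it replaces the external cancellation lemma --- which is genuinely needed in the paper's route, since rank-equals-dimension is outside the range where stable isomorphism automatically implies isomorphism --- by an explicit low-dimensional computation. Your closing observation that the canonical projection $\pi$ cannot be deformed into an isomorphism across $Z$ because of the sign change of $z$ is a pertinent sanity check, though not needed for the argument.
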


In particular, in dimension $3$, ${^b}TM$ is parallelizable when $Z$ is separating.
\begin{proof}
As $Z$ is separating, ${^b}TM$ is stably isomorphic to $TM$ by Lemma \ref{lem:stabl}. Thus by Lemma 3.7 in \cite{eli}, ${^b}TM$ is isomorphic to $TM$. As $M$ is of dimension $3$, $\chi(M)=0$ and therefore there exists a non-vanishing section of $TM$. Thus ${^b}TM$ also admits a non-vanishing section. We obtain a $b$-plane field by considering the subbundle orthogonal to such section with respect to any $b$-metric (a bundle metric on $^bTM$).
\end{proof}

In dimension $3$, by Proposition \ref{prop:TM=bTM}, if $Z$ is separating surface then there exists cooriented $b$-plane fields. However, this is not a necessary condition, as the following example shows.

\begin{example}
Consider a three-torus $\mathbb{T}^3$ with coordinates $(x,y,z)$ and choose as $Z$ the $2$-torus $\{\mathbb{T}^2\}\times \{0\}$. The critical surface is not separating, but $^bTM$ clearly admits a coorientable $b$-plane field. Indeed, the $b$-vector field $\pp{}{x}$ is a non-vanishing section of $^bTM$, so its orthogonal complement with respect to any $b$-metric is a $b$-plane field in $^bTM$.
\end{example}

\subsection{Contact geometry on the $b$-tangent bundle}

Contact geometry on $b$-manifolds was introduced in \cite{MO1}, motivated by the symplectic geometry approach to $b$-Poisson structures initiated in \cite{GMP}.

\begin{definition}
A $b$-\emph{contact} form on $(M^{2n+1},Z)$ is a one $b$-form $\alpha \in {^b}\Omega^1(M)$ such that $\alpha \wedge (d\alpha)^n\neq 0$. A $b$-contact structure is defined as the kernel of $b$-contact form.
\end{definition}
A $b$-contact structure is thus a particular case of a $b$-plane field. One could drop the assumption that there exists a globally defined contact form such that $\ker \alpha=\xi$, and just require that this is satisfied locally. For simplicity, we will always assume that a global contact form exists, this means that $\xi$ is coorientable. If we fix an orientation of $(M,Z)$, the $b$-contact structure is said to be positive if $\alpha\wedge (d\alpha)^n$ induces that given orientation. In what follows, we will only consider positive $b$-contact structures. Observe that a $b$-contact structure defines in $M\setminus Z$ a (smooth) contact structure, since a $b$-form is just a smooth form in $M\setminus Z$.\\

Inspired by the definition of convex contact structures, convexity in the $b$-setting can be defined as follows.

	 \begin{definition}\label{def:bconvex}
 A $b$-contact structure $\xi$ is {\emph{convex}} if  there exists a tubular neighborhood $U\cong Z\times (-\varepsilon,\varepsilon)$ of $Z$ and a $b$-contact form on $U$ defining $\xi$ of the form
 $$\alpha= u\frac{dz}{z}+\beta,$$
 with $u \in C^\infty(Z)$ and $\beta \in \Omega^1(Z)$.
 \end{definition}
 
 We will call such a $b$-contact form an $\mathbb{R}$-invariant $b$-contact form. The existence of such a $b$-contact form on some tubular neighborhood of $Z$ is equivalent to the existence, near $Z$, of a $b$-vector field that preserves the $b$-contact {form} and that is transverse (as a section of $^bTM$) to $Z$. {Notice that, whether $\xi$ is convex or not, given a $b$-contact form $\alpha$ the function $u$ restricted to $Z$ is determined (it is a residue in the language of Lie algebroid forms).} As we will see later, it turns out that every $b$-contact structure is convex. The notion of overtwistedness also has its $b$-contact analog.

\begin{definition}\label{def:OT}
A $b$-contact structure is \emph{overtwisted} along a connected component of $M\setminus Z$ if there exists an overtwisted disk embedded in that connected component. It is \emph{fully overtwisted} if every connected component of $M\setminus Z$ admits an embedded overtwisted disk. A $b$-contact structure that does not admit any overtwisted disk in $M\setminus Z$ is called \emph{tight}.
\end{definition}

\subsection{Jacobi structures}\label{subsec:Jacobi}

 In this section, we will shortly discuss the relation between $b$-contact structures and Jacobi structures satisfying a certain transversality condition. This is relevant to identify the classification results that we obtain with classifications of Jacobi structures, see also Remark \ref{rem:definingform} below. A Jacobi structure is a generalization of a Poisson structure, which induces on $M$ a singular foliation by contact and conformally symplectic leaves, and is defined as follows.

\begin{definition}
A \emph{Jacobi structure} on a manifold $M$ is given by a bivector field $\Lambda\in \mathfrak{X}^2(M)$ and a vector field $R\in \mathfrak{X}(M)$ that satisfy 
$$ [\Lambda,\Lambda]=2\Lambda \wedge R \quad \mathcal{L}_R \Lambda=0.$$
\end{definition}

As it is shown in \cite{MO1}, $b$-contact forms can be seen as a particular case of Jacobi structures, where $\Lambda^n \wedge R \pitchfork 0$ (here $(2n+1)$ is the dimension of the manifold). Such a Jacobi structure satisfying that $\Lambda^n \wedge R \pitchfork 0$ is called a \emph{$b$-Jacobi structure}. The following proposition is a global version of \cite[Proposition 6.4]{MO1}. Both results combined yield a global one-to-one correspondence between $b$-Jacobi structures whose singular hypersurface is $Z$, and $b$-contact forms on $(M,Z)$.
\begin{proposition}\label{prop:jacobi}
Let $\Lambda, R$ define a $b$-Jacobi structure, and denote by $Z=(\Lambda^n\wedge R)^{-1}(0)$ the critical hypersurface. Then $(\Lambda,R)$ is induced by a $b$-contact form defined on the $b$-manifold $(M,Z)$.
\end{proposition}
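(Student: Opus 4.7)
The plan is to globalize the local correspondence of \cite[Proposition 6.4]{MO1} by a sheaf-theoretic gluing argument, whose driving ingredient is that the $b$-contact form producing a given $b$-Jacobi pair is \emph{unique}. On the open set $M\setminus Z$ the pair $(\Lambda,R)$ is a regular (i.e.\ transitive) Jacobi structure, and any smooth $1$-form $\alpha$ inducing it must satisfy $\ker\alpha=\mathrm{Im}(\Lambda^\sharp)$, $\alpha(R)=1$, and $\iota_R d\alpha=0$; since $R$ is transverse to $\mathrm{Im}(\Lambda^\sharp)$ there, these conditions pin $\alpha$ down pointwise. This is the piece of information that turns an existence statement into a gluing statement.

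First I would apply the local proposition to obtain an open cover $\{U_i\}_{i\in I}$ of $M$ together with $b$-contact forms $\alpha_i\in {}^b\Omega^1(U_i)$ whose induced $b$-Jacobi pairs coincide with $(\Lambda|_{U_i},R|_{U_i})$. By the uniqueness recalled above, the smooth forms $\alpha_i|_{U_i\cap U_j\setminus Z}$ and $\alpha_j|_{U_i\cap U_j\setminus Z}$ agree. Since both are restrictions of smooth sections of the vector bundle ${}^bT^*M$ and $M\setminus Z$ is dense in $M$, this equality extends to all of $U_i\cap U_j$. Hence the collection $\{\alpha_i\}$ glues to a globally defined $b$-one-form $\alpha\in{}^b\Omega^1(M)$. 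The $b$-contact condition $\alpha\wedge(d\alpha)^n\neq 0$ and the identification of the associated $b$-Jacobi pair are local, so they are inherited from the $\alpha_i$, and we obtain a global $b$-contact form inducing $(\Lambda,R)$.

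The main obstacle is certifying that the ``uniqueness'' argument survives approach to $Z$ in the sense of $b$-forms rather than smooth ones. Concretely, one needs to know that the local forms $\alpha_i$ given by the local proposition have the same pole behavior along $Z$, so that their equality on the complement of $Z$ extends to an equality of sections of ${}^bT^*M$. The transversality condition $\Lambda^n\wedge R\pitchfork 0$, together with the canonical trivialization of the kernel of $\pi\colon {}^bTM|_Z\to TM|_Z$ from \cite[Proposition 4]{GMP}, fixes the residue of $\alpha$ along $Z$ intrinsically in terms of $(\Lambda,R)$, so this comparison is forced. Once this is in place, the gluing step is formal and yields the desired global $b$-contact form.
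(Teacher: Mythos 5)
Your argument is correct in outline, but it takes a genuinely different route from the paper. The paper does not glue local solutions at all: it writes down the candidate form \emph{globally} in one stroke, as the unique section $\alpha$ of ${}^bT^*M$ solving the fiberwise linear equations $\alpha(R)=1$ and $\Lambda(\alpha,\cdot)=0$ (solvable and unique precisely because $\Lambda^n\wedge R$ is a nowhere-vanishing section of $\bigwedge^{2n+1}\,{}^bTM$), and then verifies the $b$-contact condition by invoking the classical Jacobi identities of Vaisman on the dense set $M\setminus Z$ and extending them to $Z$ by smoothness. Your proof replaces this direct construction with local existence from \cite[Proposition 6.4]{MO1} plus a uniqueness-and-density gluing; the two share the essential mechanism (pointwise determination of $\alpha$ by $(\Lambda,R)$, and density of $M\setminus Z$ to cross the critical hypersurface), and your observation that equality of sections of the honest vector bundle ${}^bT^*M$ on a dense open set propagates everywhere is exactly the right way to dispose of the ``pole behavior'' worry, which is therefore less delicate than your last paragraph suggests. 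What the paper's route buys is self-containedness: it never needs a local existence statement, only the algebraic duality, and it proves the nondegeneracy of $d\alpha$ on $\ker\alpha$ along $Z$ rather than outsourcing it. What your route buys is modularity, but at the price of leaning on the cited local proposition in a specific form — namely that it produces, near every point of $Z$, a genuine $b$-contact form inducing the given Jacobi pair — so you should make sure the reference actually delivers local existence in that direction and not merely the map from $b$-contact forms to Jacobi structures; if it does, your proof is complete.
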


\begin{proof}
By definition, the bi-vector field $\Lambda$ and the vector field $R$ are tangent to the characteristic leaves of the Jacobi structure that they define. In particular they are both sections of $\bigwedge^* {}^bTM$, such that $\Lambda^{n}\wedge R\neq 0$ as sections of the $b$-tangent bundle. The equations 
$$ \alpha(R)=1, \enspace \Lambda(\alpha,\cdot)=0, $$
uniquely determines a section of $^bT^*M$, i.e. a one $b$-form. A classical fact in Jacobi geometry \cite{V} is that in $W=M\setminus Z$, where $^bTM|_{M\setminus Z}\cong TW$, the following relations hold:
$$ \iota_Rd\alpha=0, \enspace \beta= \beta(R)\alpha- d\alpha(\Lambda(\beta,\cdot),\cdot) \text{ for any } \beta\in T^*W. $$
In particular, given any $\gamma \in {}^bT^*M$ such that $\gamma(R)=0$ we have 
$$ \gamma(X)=d\alpha(\Lambda(\gamma,X),X), \enspace \text{for any } X\in {}^bTM$$
at every point $p$ outside $Z$. By smoothness, this holds along $Z$, which shows that $\alpha$ is a $b$-contact form in $(M,Z)$. One easily checks that the Jacobi structure induced by $\alpha$ (see \cite[Proposition 6.4]{MO1}) corresponds to $(\Lambda,R)$.
\end{proof}
\begin{remark}\label{rem:definingform}
The following remark was communicated to us by Alfonso Giuseppe Tortorella. Even if the correspondence above is between pairs $(\Lambda,R)$ and $b$-contact forms, different choices of $b$-contact forms defining the same $b$-contact structure lead to isomorphic {(and even gauge-equivalent)} Jacobi structures (understood as the Jacobi bracket on $C^\infty(M)$ induced by $(R,\Lambda)$). Hence, the classification of $b$-Jacobi structures is equivalent to the classification of $b$-contact structures.
\end{remark}
 A $b$-Jacobi structure, or equivalently, a $b$-contact structure, has the following induced singular foliation. By the transversality condition, the connected components of $M\setminus Z$ are the leaves of maximal dimension, which are (open) contact leaves. On $Z$, we have a foliation by codimension one conformally symplectic leaves and codimension two contact leaves. As we will see in Lemma \ref{lem:dividing=contact}, the contact leaves correspond to the connected components of the dividing set on $Z$ that will be defined in the next section.

In the three-dimensional case, the critical surface of the $b$-Jacobi structure is a $b$-Poisson structure (structures first studied in surfaces by Radko \cite{R}). Indeed, the restriction $\Lambda_Z$ of $\Lambda$ to $Z$ (which is well defined because $\Lambda$ is tangent to $Z$) satisfies $[\Lambda_Z,\Lambda_Z]=0$ for dimensional reasons, so it is Poisson. As furthermore $\Lambda_Z \pitchfork 0$, it is $b$-Poisson.

\section{Revisiting convexity in $b$-contact manifolds}\label{sec:bconv}

The observation that is fundamental for the rest of the work is that any $b$-contact structure is convex as in Definition \ref{def:bconvex}.

 \begin{proposition}\label{prop:bconvex}
 Any $b$-contact structure $\xi=\ker \alpha$ on a $b$-manifold $(M,Z)$ is convex.
 \end{proposition}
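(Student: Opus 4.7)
The plan is to apply a Gray-type stability argument in the $b$-category, interpolating between $\alpha$ and its naive $\mathbb{R}$-invariant truncation at $Z$. Fix a tubular neighborhood $U\cong Z\times(-\varepsilon,\varepsilon)$ with defining function $z$, and write $\alpha = u(z,x)\frac{dz}{z} + \beta(z,x)$. Set
$$\alpha_0 := u(0,x)\frac{dz}{z} + \beta(0,x),$$
which is manifestly of the convex form. It then suffices to produce a $b$-isotopy $\phi_s$ of a tubular neighborhood of $Z$ fixing $Z$ pointwise and such that $\phi_1^*\alpha = \lambda_1\alpha_0$ for some positive function $\lambda_1$: in the coordinates given by $\phi_1$ the distribution $\xi = \ker\alpha$ coincides with $\ker\alpha_0$, which is the desired convex defining form.

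I would interpolate linearly, $\alpha_s := (1-s)\alpha_0 + s\alpha$. The difference $\alpha - \alpha_0 = (u(z,x)-u(0,x))\frac{dz}{z} + (\beta(z,x)-\beta(0,x))$ vanishes along $Z$ as a $b$-form, since both parenthesized factors are $O(z)$; a short computation (using that on $Z$ the horizontal differentials of these factors also vanish and that $dz = z\cdot\frac{dz}{z}$) shows that $d(\alpha-\alpha_0)$ vanishes along $Z$ as a $b$-form as well. Consequently $\alpha_s$ and $d\alpha_s$ agree with $\alpha$ and $d\alpha$ on $Z$ as $b$-forms, so the $b$-contact condition $\alpha_s\wedge(d\alpha_s)^n\neq 0$ persists on $Z$, and by openness together with compactness of $Z\times[0,1]$, on a uniform (possibly smaller) tubular neighborhood. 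Running the standard Gray trick, I look for a time-dependent $b$-vector field $X_s\in\xi_s := \ker\alpha_s$ solving $\iota_{X_s}d\alpha_s + \dot\alpha_s = \mu_s\alpha_s$, with $\mu_s := \dot\alpha_s(R_s)$, where $R_s$ is the $b$-Reeb field of $\alpha_s$. Since $d\alpha_s|_{\xi_s}$ is nondegenerate by the $b$-contact condition, $X_s$ is uniquely determined, and being built from $b$-objects it is automatically a $b$-vector field.

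The crucial point is that $\dot\alpha_s = \alpha - \alpha_0$ vanishes along $Z$ as a $b$-form, which forces $X_s$ to vanish along $Z$ in the $b$-sense. Hence its flow $\phi_s$ is defined on some tubular neighborhood of $Z$ for all $s\in[0,1]$ and fixes $Z$ pointwise; the standard Gray computation then yields $\phi_1^*\alpha = \lambda_1\alpha_0$, proving convexity. The main technical obstacle is the $b$-category bookkeeping: checking that the linear path stays $b$-contact uniformly near $Z$ (handled by the two vanishing statements above), and verifying that $X_s$ is genuinely a $b$-vector field whose $b$-vanishing along $Z$ prevents its flow from leaving a tubular neighborhood and ensures it fixes $Z$.
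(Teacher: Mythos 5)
Your proof is correct and follows essentially the same route as the paper: both restrict $u$ and $\beta$ to $Z$ to obtain the $\mathbb{R}$-invariant model, interpolate linearly, observe that the path stays of $b$-contact type near $Z$ because the endpoints (and, by the $b$-form calculus, their differentials) agree along $Z$, and conclude by the Moser/Gray argument in the $b$-category. The paper simply cites Gray stability for $b$-contact structures from \cite{MO1} where you spell out the Gray vector field and its $b$-vanishing along $Z$; that extra detail is consistent with the cited result.
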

 
 \begin{proof}
 In a tubular neighborhood $\mathcal{N}(Z)$ around $Z$, the $b$-contact form decomposes as $\alpha=u \frac{dz}{z}+\beta$ where $u \in C^\infty(\mathcal{N}(Z))$ and $\beta \in \Omega^1(\mathcal{N}(Z))$. The key observation is that along $Z$ the $b$-contact condition only takes into account $u$ and $\beta$ restricted to $Z$. The $b$-contact condition writes
 \begin{equation}\label{eq:bcontact}
  \alpha \wedge (d\alpha)^n= \frac{dz}{z}\wedge (u(d\beta)^n+{n}\beta\wedge du \wedge (d\beta)^{n-1}+z\gamma) \neq 0,
\end{equation}
 where $\gamma \in \Omega^{2n+1}(\mathcal{N}(Z))$. We deduce that 
 
 $i^*(u(d\beta)^n+\beta \wedge du\wedge (d\beta)^{n-1})$ defines a volume form in $Z$, where $i:Z\hookrightarrow \mathcal{N}(Z)$ is the inclusion of $Z$ in its neighborhood. We define the $b$-form $\tilde{\alpha} \in {^b}\Omega^1(\mathcal{N}(Z))$ as
 $$\tilde{\alpha} =i^*u \frac{dz}{z} + i^*\beta.$$
It is of $b$-contact type by the previous discussion, and $\mathbb{R}$-invariant by construction. Consider the family of $b$-forms 
 $$ \alpha_t= (1-t)\alpha + t\tilde \alpha, t\in [0,1].$$
Since $\alpha$ and $\tilde \alpha$ coincide along $Z$, the $b$-form $\alpha_t$ is contact for all $t$ at least in a small neighborhood of $Z$. We can now apply Moser's path method for $b$-forms and Gray's stability for $b$-contact structures \cite[Theorem 5.13]{MO1} to find an isotopy $\varphi_t:\mathcal{N}'(Z)\longrightarrow \mathcal{N}(Z)$ of a perhaps smaller neighborhood of $Z$ such that $\varphi_1^*\xi=\ker \tilde \alpha$, proving the proposition. 
 \end{proof}
 
 \begin{remark}
 This proposition is of course not true for an arbitrary hypersurface on a smooth contact manifold since there exist non-convex hypersurfaces. The difference comes from Equation (\ref{eq:bcontact}): the derivation of $b$-forms imposes that some terms vanish along $Z$, something that does not happen with the usual contact condition near an embedded hypersurface \cite[Section 2.5.4]{Ge}. 
 \end{remark}

We now introduce the notion of dividing set, in analogy with convex surfaces in contact geometry. 
 \begin{definition}\label{def:bdividingset}
The \emph{dividing set} $\Gamma \subset Z$ of a $b$-contact structure $\xi=\ker \alpha$ is defined by the set $\{x\in Z \enspace |\enspace \alpha(v)=0 \}$,  where $v$ any non-vanishing section of the canonical line bundle $\ker \pi|_Z$ over $Z$, where $\pi:{^bTM} \longrightarrow TM$ is the anchor map of the $b$-tangent bundle to $TM$.
\end{definition}

Alternatively, given a splitting as in Definition \ref{def:bconvex}, the dividing set $\Gamma \subset Z$ of a $b$-contact structure $\xi=\ker \alpha$ is defined by  the set $\{x\in Z \enspace | \enspace u|_Z(x)=0 \}$.

 Observe that the dividing set is well-defined, i.e. does not depend on a specific choice of defining $b$-contact form: any other $b$-contact form is of the form $\alpha'=f\alpha$ for a non-vanishing $f\in C^\infty(M)$.

\begin{corollary}
    The diving set of a $b$-contact structure is a non-empty codimension one embedded hypersurface.
\end{corollary}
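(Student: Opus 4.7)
The plan is to use the convexity result of Proposition \ref{prop:bconvex} to reduce the question to an analysis of the scalar function $u$ that defines $\Gamma$. After replacing $\alpha$ by the equivalent $\mathbb{R}$-invariant form produced in that proposition, one may assume $\alpha = u\,\frac{dz}{z}+\beta$ in a tubular neighborhood of $Z$, with $u\in C^\infty(Z)$ and $\beta\in\Omega^1(Z)$, so that $\Gamma=u^{-1}(0)\subset Z$. As recorded in Equation (\ref{eq:bcontact}), the $b$-contact condition restricted to $Z$ amounts to the statement that
\[
\omega := u\,(d\beta)^n+n\,\beta\wedge du\wedge(d\beta)^{n-1}
\]
is a volume form on $Z$.

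From this, the embedded codimension-one property is essentially immediate. At any point of $\Gamma$ the first summand of $\omega$ vanishes, so the $2n$-form $n\,\beta\wedge du\wedge(d\beta)^{n-1}$ must be nonzero there, which in particular forces $du\neq 0$. Hence $0$ is a regular value of $u$ and $\Gamma$ is a closed embedded hypersurface of $Z$.

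The main obstacle is therefore non-emptiness, which I propose to handle by contradiction. Suppose $u$ does not vanish on some connected component $Z_0$ of $Z$. A Leibniz computation (using $du\wedge\beta=-\beta\wedge du$ and $d((d\beta)^{n-1})=0$) yields the identity
\[
d\!\left(\frac{\beta\wedge(d\beta)^{n-1}}{u^n}\right) \;=\; \frac{\omega}{u^{n+1}}
\]
on $Z_0$. Since $u$ has constant sign on the connected component $Z_0$, so does $u^{n+1}$, and the right-hand side is a volume form on $Z_0$ up to global orientation reversal. The left-hand side, however, is an exact top form, and $Z_0$ is closed and orientable, so integrating over $Z_0$ contradicts Stokes' theorem. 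Hence $u$ must vanish on every connected component of $Z$, so $\Gamma\neq\emptyset$. The only genuinely nontrivial input is the displayed identity, which reduces to a short direct calculation once one expands $d(u^{-n}\cdot\beta\wedge(d\beta)^{n-1})$.
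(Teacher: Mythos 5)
Your proof is correct and follows essentially the same route as the paper: convexity reduces to the $\mathbb{R}$-invariant form $u\frac{dz}{z}+\beta$, the volume-form condition on $u(d\beta)^n+n\beta\wedge du\wedge(d\beta)^{n-1}$ forces $du\neq 0$ along $u^{-1}(0)$, and non-emptiness follows by exhibiting an exact volume form on a closed manifold if $u$ were nowhere zero. Your displayed identity $d\bigl(u^{-n}\beta\wedge(d\beta)^{n-1}\bigr)=\omega/u^{n+1}$ is simply the explicit primitive behind the paper's exactness claim, stated per connected component.
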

\begin{proof}
    By Proposition \ref{prop:bconvex}, we can choose an $\mathbb{R}$-invariant contact form in a collar neighborhood of $Z$. It writes $\alpha=u\frac{dz}{z}+\beta$ with $z$ a defining function of $Z$ and $u,\beta$ forms invariant by $z$, and observe that the dividing set is $\Gamma=\{i^*u^{-1}(0)\}$, where $i:Z\rightarrow M$ is the inclusion of the critical set. The contact condition implies that $i^*(u(d\beta)^n+\beta \wedge du\wedge (d\beta)^{n-1})$, and thus whenever $i^*u$ vanishes we necessarily have $di^*u\neq 0$. Furthermore, if $u$ never vanishes then $u(d\beta)^n+n\beta\wedge du\wedge (d\beta)^{n-1}$ defines an exact volume form on $Z$, which is a contradiction with the fact that $Z$ is closed. Hence $\Gamma=u^{-1}(0)$ is non-empty as claimed.
\end{proof}
Furthermore, given a defining convex $b$-contact form defining $\xi$, the dividing set has an induced orientation: indeed, we denote by $Z_\pm:=\{x\in Z| \pm u(x)<0\}$. The induced orientation is given by a vector field on $Z$, pointing transversally out of $Z_+$ along $\Gamma$. As we only consider positive $b$-contact structures, the orientation does not depend on the choice of $b$-contact form defining $\xi$.
As we saw, $b$-contact structures can be seen as particular Jacobi structures. In this formulation, the dividing set coincides with the codimension two leaves of the singular foliation of the associated Jacobi structure. This follows from the local normal forms of $b$-contact forms \cite[Theorem 5.4]{MO1}.

 \begin{lemma}\label{lem:dividing=contact}
     The singular foliation of the $b$-Jacobi structure associated with a $b$-contact form has the following leaves:
     \begin{itemize}
         \item Each connected component of $M\setminus Z$ is a contact leaf of maximal dimension,
         \item each connected component of the dividing set $\Gamma\subset Z$ is a contact leaf of codimension two,
         \item each connected component of $Z\setminus \Gamma$ is a conformally symplectic (which is in fact symplectic) leaf of codimension one.
     \end{itemize}
 \end{lemma}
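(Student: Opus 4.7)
My plan is to derive all three claims from the local normal forms of $b$-contact forms established in \cite[Theorem 5.4]{MO1}, together with the correspondence of Proposition \ref{prop:jacobi} and its local counterpart in \cite[Proposition 6.4]{MO1}. Recall that for any Jacobi pair $(\Lambda, R)$, the characteristic distribution at a point $p$ is spanned by $R_p$ and the image of the sharp map $\Lambda^\sharp_p \colon T^*_pM \to T_pM$, and the leaves of the associated singular foliation are the maximal integral submanifolds of this distribution.

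The first assertion is immediate: on $M \setminus Z$ the $b$-contact form $\alpha$ is a smooth contact form, and the induced Jacobi pair is the standard one associated with a contact structure. In that case the characteristic distribution coincides with $TM$, so every connected component of $M \setminus Z$ is a single leaf of maximal dimension $2n+1$.

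For the remaining two assertions I would work in an $\mathbb{R}$-invariant tubular neighborhood of $Z$, which exists by Proposition \ref{prop:bconvex}, and write $\alpha = u \tfrac{dz}{z} + \beta$ with $u \in C^\infty(Z)$, $\beta \in \Omega^1(Z)$, and $\Gamma = \{u = 0\}$ where $u$ vanishes transversally. At a point of $Z \setminus \Gamma$, the local normal form of \cite[Theorem 5.4]{MO1} brings $\alpha$ to a model in which $u$ is constant and $\beta$ is a contact-type primitive on $Z$. Computing $(\Lambda, R)$ from this model on $M\setminus Z$ and extending smoothly across $Z$ as in the proof of Proposition \ref{prop:jacobi}, I would verify that both $R$ and the image of $\Lambda^\sharp$ become tangent to $Z$ with combined rank $2n$, so each connected component of $Z\setminus \Gamma$ is a codimension one leaf; the induced pair on $Z$ is then directly read off as a locally conformally symplectic structure, and the $\mathbb{R}$-invariance together with the normal form force the Lee form to vanish, making it symplectic. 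At a point of $\Gamma$, where $u$ vanishes to first order transversely, the corresponding normal form of \cite[Theorem 5.4]{MO1} allows one to check that the characteristic rank drops once more to $2n-1$ along $\Gamma$, giving codimension two leaves whose induced Jacobi structure is exactly the contact one associated with the restriction of $\beta$ to $\Gamma$.

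The main obstacle I expect is the careful bookkeeping of how the smooth objects $\Lambda$ and $R$ vanish along $Z$: since $\alpha$ has a singular $\tfrac{dz}{z}$ component, the defining relations $\alpha(R) = 1$ and $\Lambda(\alpha, \cdot) = 0$ naturally live on ${}^bT^*M$, and one must track which orders of the defining function $z$ appear in the smooth components of $R$ and $\Lambda$ in order to get precisely the claimed drops in rank, namely by one along $Z\setminus \Gamma$ and by two along $\Gamma$. Once this is under control, the identification of the three induced geometries (contact, symplectic, contact) follows immediately from the two local normal forms.
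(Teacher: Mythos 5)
Your proposal is correct and follows essentially the same route as the paper: the paper likewise reduces to the $b$-Darboux normal forms of \cite[Theorem 5.4]{MO1} (one model at points of $\Gamma$, two at points of $Z\setminus\Gamma$), computes the bivector $\Lambda$ and the Reeb field explicitly in those coordinates to read off the rank drops, and identifies the symplectic structure on $Z\setminus\Gamma$ via the restriction of $d\beta$. The only cosmetic difference is your preliminary appeal to $\mathbb{R}$-invariance via Proposition \ref{prop:bconvex}, which the paper bypasses by working directly in Darboux coordinates.
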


 \begin{proof}
     The first item is trivial, as the $b$-Jacobi structure at any point in $p\in M\setminus Z$ is of maximal possible rank since the $b$-contact form is contact at $p$. Let $2n+1$ be the dimension of $M$, and $p\in \Gamma$ a point of the dividing set. By the $b$-Darboux theorem \cite[Theorem 5.4]{MO1}, there exists a local coordinate chart $(U,(x_i,y_i,z))$, $i=1,\dots, n$, centered at $p$ such that that the $b$-contact form is given by $\alpha=dx_1+y_1\frac{dz}{z}+\sum_{i=2}^n x_i dy_i$ (the remaining two cases of \cite[Theorem 5.4]{MO1} are excluded by the assumption that $p\in \Gamma$). Observe that $\Gamma \cap U=\{y_1=z=0\}$. A direct computation yields that the bi-vector field of the Jacobi structure in these coordinates is given by 
     $$\Lambda=\frac{\partial}{\partial y_1}\wedge z\frac{\partial}{\partial z}-y_1\frac{\partial}{\partial y_1}\wedge \frac{\partial}{\partial x_1} + \sum_{i=2}^n (y_i \pp{}{x_1}-\pp{}{x_i} )\wedge \pp{}{y_i}.$$
     The rank of $\Lambda$ is thus $2n-2$ along $\Gamma \cap U$, meaning that the Jacobi structure spans a leaf of the associated foliation through $p$ is of dimension $2n-1$ with an induced contact structure. This corresponds to the connected component of $\Gamma$ where $p$ lies.

     Let now $p$ be a point in $Z\setminus \Gamma$. Then we are in the two remaining cases of the $b$-Darboux theorem \cite[Theorem 5.4]{MO1}. Denote by $R_\alpha$ the Reeb $b$-vector field of $\alpha$.
     \begin{equation*}
        \alpha|_p=\begin{cases}
             \left(dx_1+(1+y_1)\frac{dz}{z}+\sum_{i=2}^n x_i dy_i\right)|_p, \quad \text{if $\pi(R_\alpha|_{p})\neq 0$}\\
             \left(\frac{dz}{z}+\sum_{i=1}^nx_idy_i\right)|_p,\quad \text{otherwise,}
         \end{cases}
     \end{equation*}
          where $\pi:{^b}TM\to TM$ is the anchor map. The associated Jacobi structures are
    $$
             \left(\frac{\partial}{\partial y_1}\wedge z\frac{\partial}{\partial z}-(1+y_1)\frac{\partial}{\partial y_1}\wedge \frac{\partial}{\partial x_1} + \sum_{i=2}^n (y_i \pp{}{x_1}-\pp{}{x_i} )\wedge \pp{}{y_i}\right)|_p,$$
     if  $\pi((R_\alpha)_{p})\neq 0$, and 
     $$\left(\sum_{i=1}^n \pp{}{x_i}\wedge \pp{}{y_i}+ x_i z\pp{}{z}\wedge \pp{}{x_i}\right)|_p,$$
     otherwise. Furthermore, in the first case $R_{\alpha}=\pp{}{x_1}$ and in the second case $R_{\alpha}$ vanishes along $TZ$ as a $b$-vector field, and thus vanishes in $TM|_p$ as a $b$-vector field. Since $\Lambda$ has rank $2n$ but $\Lambda^n \wedge R_{\alpha}=0$ at these points, $p$ belongs to a codimension two conformally symplectic leaf. The fact that the leaf is symplectic follows from observing that $\Lambda$ on $Z\setminus \Gamma$ is induced by the restriction of the form $d\beta$ to $Z$, where $\alpha=u\frac{dz}{z}+\beta$, which is non-degenerate away from $\Gamma$.
 \end{proof}
Notice that in particular the singular foliation only depends on the $b$-contact structure, and not on the defining form. This is not surprising in view of Remark \ref{rem:definingform}.

We can now introduce the concept of overtwistedness for $b$-contact structures, by looking at each leaf of maximal dimension.
 
 The next lemma shows that in a neighborhood of the critical set, a convex $b$-contact structure can be viewed as a contact structure. This lemma has been previously used in \cite{MO1} for the singularization of convex contact structures and desingularization of $b$-contact structures. Recall that a $b$-isotopy of $(M,Z)$ is an isotopy of $M$ preserving $Z$.

 \begin{lemma}\label{lem:convexbcontact=convexcontact}
 Let $U\cong Z\times (-\varepsilon,\varepsilon)$ be a tubular neighborhood of $Z$. There is a one-to-one correspondence between $\mathbb{R}$-invariant $b$-contact forms in $(U,Z)$ and $\mathbb{R}$-invariant contact forms in $U$.
 \end{lemma}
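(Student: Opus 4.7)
The plan is to construct the correspondence explicitly via the coordinate-level formula
$$\alpha = u\,\tfrac{dz}{z}+\beta \quad \longleftrightarrow \quad \tilde\alpha = u\, dz + \beta,$$
where $z$ is the coordinate on $(-\varepsilon,\varepsilon)$ (a defining function for $Z$), and $u\in C^\infty(Z)$, $\beta\in\Omega^1(Z)$ are pulled back via the projection $U\cong Z\times(-\varepsilon,\varepsilon)\to Z$. The forward map is Definition \ref{def:bconvex}; for the inverse, given an $\mathbb{R}$-invariant contact form $\tilde\alpha$ on $U$ I would set $u:=\iota_{\partial_z}\tilde\alpha$ and $\beta:=\tilde\alpha - u\,dz$, and use $\mathcal{L}_{\partial_z}\tilde\alpha=0$ to check that both descend to $Z$. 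It is clear by construction that the two assignments are mutually inverse.

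The heart of the proof is a short direct computation. Because $u,\beta$ are pulled back from $Z$ and $d(dz/z)=0$, one gets $d\alpha = du\wedge\tfrac{dz}{z}+d\beta$ and $d\tilde\alpha = du\wedge dz + d\beta$. Using that $(du\wedge\tfrac{dz}{z})^2=(du\wedge dz)^2=0$ and that $\beta\wedge(d\beta)^n=0$ for degree reasons (as $\dim Z=2n$), the binomial expansion collapses to
$$\alpha\wedge(d\alpha)^n=\frac{dz}{z}\wedge\Theta, \qquad \tilde\alpha\wedge(d\tilde\alpha)^n=dz\wedge\Theta,$$
where
$$\Theta := u(d\beta)^n+n\,\beta\wedge du\wedge (d\beta)^{n-1}$$
is the \emph{same} $2n$-form on $Z$ in both cases. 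Hence $\alpha\wedge(d\alpha)^n$ is a nowhere-vanishing $b$-form on $U$ if and only if $\Theta$ is a volume form on $Z$, if and only if $\tilde\alpha\wedge(d\tilde\alpha)^n$ is a nowhere-vanishing $(2n+1)$-form on $U$.

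There is no genuine obstacle here; the lemma is essentially unpacking notation. The only point worth handling carefully is that the non-vanishing conditions are a priori imposed on all of $U$, not just on $Z$: since $\Theta$ is constant along the $(-\varepsilon,\varepsilon)$-fibers of $U\to Z$, the conditions at a single slice propagate to the whole neighborhood, and the two notions of non-vanishing (as a $b$-form in $\bigwedge^{2n+1}{}^bT^*U$ versus as a genuine top-form in $\bigwedge^{2n+1}T^*U$) both reduce to $\Theta\neq 0$ on $Z$. This finishes the correspondence.
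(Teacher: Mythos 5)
Your proof is correct and takes essentially the same route as the paper, which also defines the correspondence by the identification $dz \leftrightarrow \frac{dz}{z}$ and observes that for $\mathbb{R}$-invariant $u,\beta$ the contact and $b$-contact conditions coincide. Your explicit computation of the common form $\Theta = u(d\beta)^n + n\,\beta\wedge du\wedge(d\beta)^{n-1}$ just spells out what the paper leaves implicit (and matches its Equation (3.1) with the $z\gamma$ term absent by invariance).
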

 \begin{proof}
Any $\mathbb{R}$-invariant $b$-contact form writes $\alpha=u\frac{dz}{z}+\beta$, where $z$ is a coordinate in $(-\varepsilon,\varepsilon)$ and $u\in C^\infty(Z)$ and $\beta \in \Omega^1(Z)$. The correspondence is hence given by the identification $dz \longmapsto \frac{dz}{z}$, since for $\mathbb{R}$-invariant $u,\beta$, the $b$-contact condition and the contact condition are the same.
 \end{proof}
Intuitively, the lemma asserts that results known for convex contact hypersurfaces can be translated to $b$-contact structures.

\begin{remark} \label{rem:charfoldef}
Let $\alpha$ be the convex $b$-contact form such that $\xi = \ker \alpha$ and we write $\alpha= u\frac{dz}{z}+\beta$ where $u \in C^\infty(Z)$ and $\beta \in \Omega^1(Z)$. In a tubular neighborhood of $Z$, we can define the contact structure defined by the kernel of $\overline{\alpha}=udz+\beta$. The characteristic foliation of $Z$ (in the $b$-contact sense) could be defined by the characteristic foliation (in the usual sense) of $\ker \overline{\alpha}$. However, in contrast to the contact case, this definition depends on the choice of $b$-contact form defining the $b$-contact structure. For this reason, in what follows, we will work exclusively with the dividing associated with the $b$-contact structure.
 \end{remark}
 
 Up to here, our discussion holds for $b$-contact manifolds of any dimension. The following corollaries, which only make sense for three-dimensional $b$-contact manifolds, will be of interest for our classification results.\\

\paragraph{\textbf{Giroux criteria for three dimensional $b$-contact manifolds.}}
 
 In this section, we restrict to three dimensions. A key lemma to relate results in convex surface theory in the contact case with the $b$-contact setting is the following.
\begin{lemma}\label{lem:convexdelta}
Let $\xi$ be a convex $b$-contact structure on an odd-dimensional $b$-manifold $(M, \hat Z)$, and let $Z$ be a connected component of $\hat Z$ with dividing set $\Gamma\subset Z$ its dividing set. Let $U\cong Z\times (-\varepsilon,\varepsilon)$ be a tubular neighborhood of $Z$ where $\xi=\ker \alpha$ for a $\mathbb{R}$-invariant $b$-contact form $\alpha$. For any surface $Z^*=Z\times \{\pm \delta\}$ with $0<\delta<\varepsilon$, let $(N,\xi|_N)$ be the open contact manifold corresponding to the connected component of $M\setminus Z$ where $Z^*$ lies. Then $Z^*$ is a (usual) convex surface in $(N,\xi|_N)$ with dividing set $\Gamma \times \{\delta\}$. Furthermore
$(V,\xi|_V)=(N\cap U,\xi_{N\cap U})$ is contactomorphic to a contact $\mathbb{R}$-invariant neighborhood of $Z^*$ inside $N$.
\end{lemma}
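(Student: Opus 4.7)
The strategy is to convert the singular factor $\frac{dz}{z}$ into a smooth exact form away from $Z$ by a logarithmic change of coordinate, after which convexity and the identification of the dividing set become direct computations. I would fix an $\mathbb{R}$-invariant $b$-contact form $\alpha = u\frac{dz}{z}+\beta$ defining $\xi$ on $U$, with $u\in C^\infty(Z)$ and $\beta\in \Omega^1(Z)$, which is possible by the convexity hypothesis and Lemma \ref{lem:convexbcontact=convexcontact}. Since the ambient $b$-manifold is orientable, $Z$ is separating and $N$ lies on one side of $Z$; in particular the function $z$ has a definite sign on $V=N\cap U$, so $|z|$ is smooth and positive there and $\frac{dz}{z}=d(\log|z|)$ as a smooth form. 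Introducing the coordinate $s:=\log(|z|/\delta)$ gives a diffeomorphism $V\cong Z\times(-\infty,\log(\varepsilon/\delta))$ carrying $Z^*$ to the slice $\{s=0\}$, and a direct check shows $ds=\frac{dz}{z}$ regardless of the sign of $z$. Hence $\alpha$ pulls back to the smooth, $s$-invariant contact form $u\,ds+\beta$, which already establishes the second claim: $(V,\xi|_V)$ is contactomorphic to an $\mathbb{R}$-invariant neighborhood of $Z^*$ inside $N$.

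For the first claim, the vector field $\partial_s$ is transverse to $Z^*$ and preserves the $s$-invariant form $u\,ds+\beta$, so it is a contact vector field transverse to $Z^*$, establishing convexity. The associated dividing set is by definition
$$\{p\in Z^* : (u\,ds+\beta)_p(\partial_s|_p)=0\}=\{u=0\},$$
which matches $\Gamma\times\{\pm\delta\}$: by Definition \ref{def:bdividingset}, the $b$-dividing set $\Gamma$ is also cut out by $\{u=0\}$, since pairing $\alpha$ with the canonical $b$-section $z\partial_z$ of $\ker\pi|_Z$ returns exactly $u$.

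The argument is essentially Lemma \ref{lem:convexbcontact=convexcontact} applied on a one-sided punctured collar, combined with the fact that the two notions of dividing set (the $b$-one and the convex surface one) are cut out by the same function. No serious obstacle arises; the only minor subtlety is that $s$ ranges over the half-infinite interval $(-\infty,\log(\varepsilon/\delta))$, which is consistent with the statement only asking for a contactomorphism with some $\mathbb{R}$-invariant neighborhood, not necessarily a symmetric one.
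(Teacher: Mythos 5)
Your proof is correct and is essentially the paper's argument: your transverse contact vector field $\partial_s$ is exactly the field $Y=z\,\partial_z$ used in the paper (the paper verifies $\mathcal{L}_Y\alpha=0$ directly and then uses the flow of $Y$ to produce the $\mathbb{R}$-invariant identification, which is precisely your logarithmic coordinate change), and both proofs identify the dividing set as $\{u=0\}=\Gamma\times\{\delta\}$ in the same way.
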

\begin{proof}
     Assume $\delta>0$ for simplicity, the other case being analogous. Let $z$ denote the coordinate in a tubular neighborhood $U\cong Z\times (-\varepsilon,\varepsilon)$ around $Z$ for which the $b$-contact structure is defined by a $b$-form $\alpha$ that is $\mathbb{R}$-invariant.  The fact that $Z^*$ is convex in $(N,\xi|_N)$ is not immediate from the existence of the $\mathbb{R}$-invariant $b$-contact form $\alpha$, since it is not $\mathbb{R}$-invariant when understood as a smooth form. Indeed, it writes $\alpha=\frac{u}{z}dz+\beta$ and notice that although $u$ is $\mathbb{R}$-invariant, the function $\frac{u}{z}$ is not $z$-independent. However, one way to see that $Z^*$ is convex is by observing that the vector field $Y=z\pp{}{z}$ defines, in a neighborhood $Z\times (\delta-\tau, \delta+\tau)$ of $Z^*$ completely contained in $N$, a contact vector field transverse to $Z^*$. Indeed, we have 
\begin{align*}
    \mathcal{L}_Y\alpha&= d\iota_Y\alpha + \iota_Yd\alpha=du-du=0.\\
\end{align*}  Observe that the dividing set (in the usual sense of contact geometry) of the convex surface $t=\delta$ is given by $\Gamma\times \{\delta\}$, since $\alpha(Y)=0$ exactly when $u=0$, and hence $\Gamma=\{p\in Z^* \enspace |\enspace Y_p\in \xi_p\}$. 

Furthermore, assuming $\delta$ is small enough, there exists some $\delta' \in (\delta, \varepsilon)$ and time $t^*>0$ such that the flow $\varphi_t$ of the transverse contact vector field $Y=z\pp{}{z}$ satisfies $\varphi_{t^*}(Z\times (0,\delta'))=Z\times(0,\epsilon)$. This contactomorphism identifies $(V,\xi_V)$ 
with a contact $\mathbb{R}$-invariant neighborhood of $Z^*$, see e.g. the proof of \cite[Lemma 4.6.19]{Ge}. 
\end{proof}

 The first corollary is the well-known Giroux criteria for contact structures on three-manifolds adapted to $b$-contact structures on $b$-manifolds of dimension three.
 
 \begin{corollary}[Giroux criteria for $b$-contact structures]\label{cor:bgirouxcrit}
Let $\xi$ be a convex $b$-contact structure on a three-dimensional $b$-manifold $(M, \hat Z)$. If the dividing set $\Gamma$ on a connected component $Z$ of $\hat Z$ contains a circle $\gamma$ contractible in $Z$, then in a neighborhood $U$ of $Z$, each connected component of $U\setminus Z$ contains an overtwisted disk compactly supported in $U\setminus Z$, unless $Z\cong S^2$ and $\Gamma$ is connected.
\end{corollary}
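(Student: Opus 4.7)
The plan is to reduce the statement to the classical Giroux criterion for convex surfaces in smooth contact manifolds, using the translation between convex $b$-contact structures and $\mathbb{R}$-invariant contact structures provided by Lemma \ref{lem:convexdelta}.

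First, since $\xi$ is convex, Proposition \ref{prop:bconvex} supplies an $\mathbb{R}$-invariant $b$-contact form $\alpha = u\frac{dz}{z}+\beta$ on some tubular neighborhood $U \cong Z\times (-\varepsilon,\varepsilon)$ of $Z$, and by Definition \ref{def:bdividingset} the dividing set $\Gamma$ is exactly $\{u|_Z=0\}$. Pick any $\delta \in (0,\varepsilon)$ and consider the parallel surface $Z^* = Z \times \{\delta\}$, which lies in one connected component $N$ of $M\setminus Z$. By Lemma \ref{lem:convexdelta}, $Z^*$ is a genuine convex surface in $(N,\xi|_N)$ with dividing set $\Gamma\times\{\delta\}\cong \Gamma$, and the $\mathbb{R}$-invariant neighborhood $Z\times(0,\varepsilon)\subset N$ is contactomorphic to an $\mathbb{R}$-invariant neighborhood of $Z^*$ inside $N$.

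Next I invoke the classical Giroux criterion for convex surfaces (see, e.g., \cite[Theorem 4.8.5]{Ge} or the original \cite{G1}): a convex surface $\Sigma$ in a contact three-manifold with contractible dividing curve admits an overtwisted disk in any $\mathbb{R}$-invariant neighborhood, with the sole exception of $\Sigma\cong S^2$ with connected dividing set. Applied to $Z^*$, the hypothesis that $\Gamma$ contains a circle $\gamma$ contractible in $Z$, together with the exclusion of the sphere-with-one-circle case, yields an embedded overtwisted disk $D$ contained in a small $\mathbb{R}$-invariant collar $Z\times(\delta-\tau,\delta+\tau)\Subset Z\times(0,\varepsilon)$ of $Z^*$. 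In particular $D$ is compactly supported in the connected component $Z\times(0,\varepsilon)$ of $U\setminus Z$. Repeating the argument with $\delta<0$ produces an overtwisted disk compactly supported in the other connected component $Z\times(-\varepsilon,0)$.

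The main conceptual point (rather than a true obstacle) is verifying that the classical Giroux criterion really applies to $Z^*$: the $b$-contact form $\alpha$ is \emph{not} $\mathbb{R}$-invariant when viewed as a smooth form on $N$, so convexity of $Z^*$ is not automatic. This is precisely what Lemma \ref{lem:convexdelta} settles through the contact vector field $Y = z\,\partial_z$, and it is also what guarantees that the dividing set of $Z^*$ in the usual sense coincides with $\Gamma$, so that the contractibility hypothesis transfers. Once this identification is in place, the corollary is essentially a reformulation of the standard statement, and the exceptional case $(Z,\Gamma) = (S^2, S^1)$ is inherited directly from the smooth version.
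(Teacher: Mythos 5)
Your proposal is correct and follows essentially the same route as the paper: pass to the parallel surface $Z^* = Z\times\{\delta\}$, use Lemma \ref{lem:convexdelta} to see it is a genuine convex surface with dividing set $\Gamma\times\{\delta\}$ in an $\mathbb{R}$-invariant neighborhood, and then invoke the classical Giroux criterion, treating each side of $Z$ separately. The observation you single out---that convexity of $Z^*$ is not automatic because $\alpha$ fails to be $\mathbb{R}$-invariant as a smooth form---is exactly the point the paper delegates to Lemma \ref{lem:convexdelta}.
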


\begin{proof}
 Let $U\cong Z\times (-\varepsilon,\varepsilon)$ be a neighborhood of $Z$ with a $\mathbb{R}$-invariant $b$-contact form $\alpha$. Let $(V,\xi|_V)$ be a connected component of $U\setminus Z$, understood as a contact manifold. By Lemma \ref{lem:convexdelta}, it is contactomorphic to a neighborhood of the convex surface $Z^*=Z\times \{\delta\}$ with dividing set $\Gamma\times \{\delta\}$. An application of the usual Giroux criterion for convex surfaces in contact manifolds tells us that in a small neighborhood of $Z^*$, strictly contained in $V$, there is an overtwisted disk if and only if $\Gamma$ contains a circle $\gamma$ contractible in $Z$ and we are not in the case that $Z\cong S^2$ and $\Gamma$ is connected.
\end{proof}

Another important fact asserts that the dividing set of the $b$-contact structure determines the $b$-contact structure in a neighborhood around the critical set $Z$. 
\begin{proposition}\label{cor:dividingset}
Let $\xi_0,\xi_1$ be two $b$-contact structures on $(M^3,Z)$ with dividing sets $\Gamma_0,\Gamma_1$. Assume that there is an orientation preserving isotopy from $\Gamma_0$ to $\Gamma_1$. Then there is a $b$-isotopy $\psi_t : \mathcal{N}(Z) \to \mathcal{N}(Z)$, $t \in [0,1]$, where $\mathcal{N}(Z)$ is a tubular neighbourhood around $Z$ such that $\psi_1(\xi_0)=\xi_1$.
\end{proposition}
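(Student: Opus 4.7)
The plan is to reduce the statement to the well-known Giroux uniqueness theorem for neighborhoods of convex surfaces in three-dimensional contact geometry, passing through the $b$-convex/convex dictionary established in Lemma \ref{lem:convexbcontact=convexcontact}.

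First, I would invoke Proposition \ref{prop:bconvex} to choose, for $i=0,1$, an $\mathbb{R}$-invariant $b$-contact form $\alpha_i = u_i \frac{dz}{z} + \beta_i$ defining $\xi_i$ on a common tubular neighborhood $U \cong Z \times (-\varepsilon,\varepsilon)$. By Lemma \ref{lem:convexbcontact=convexcontact}, the smooth forms $\overline{\alpha}_i = u_i\,dz + \beta_i$ are $\mathbb{R}$-invariant contact forms on $U$, and the surface $Z \times \{0\}$ is convex in the usual sense with respect to each of them. Unwinding Definition \ref{def:bdividingset}, the dividing set of $\overline{\alpha}_i$ (as a contact convex surface) is exactly $\Gamma_i = \{u_i|_Z = 0\} \subset Z$, with the same induced coorientation as in the $b$-picture, since both notions are encoded by the vanishing of $u_i$ on $Z$ and the sign of $u_i$ on either side.

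Next, I would apply Giroux's realization/uniqueness theorem for convex surfaces (see e.g. \cite[Section 4.6]{Ge}): if two contact structures on $Z \times (-\varepsilon,\varepsilon)$ that are $\mathbb{R}$-invariant admit $Z$ as a convex surface and their dividing sets $\Gamma_0, \Gamma_1$ are isotopic in $Z$ by an orientation preserving isotopy, then there is a smooth isotopy $\varphi_t$ of a (possibly smaller) neighborhood $\mathcal{N}(Z)$, fixing $Z$ setwise, with $\varphi_1^*(\ker \overline{\alpha}_1) = \ker \overline{\alpha}_0$. Concretely, one first isotopes $\Gamma_0$ to $\Gamma_1$ inside $Z$, extends this isotopy to $Z \times (-\varepsilon,\varepsilon)$ as the identity in the transverse direction, and then applies Moser's trick to the one-parameter family of $\mathbb{R}$-invariant contact forms interpolating the two resulting dividing sets; the standard fact that the $\mathbb{R}$-invariant contact structure near $Z$ is determined by the (oriented) isotopy class of the dividing set is precisely what is needed.

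Finally, I would transfer this back to the $b$-setting. Since $\varphi_t$ preserves $Z$, it is a $b$-diffeomorphism of $\mathcal{N}(Z)$, and hence induces a map on $^bT\mathcal{N}(Z)$ sending $b$-forms to $b$-forms. Under the identification $dz \leftrightarrow \frac{dz}{z}$ of Lemma \ref{lem:convexbcontact=convexcontact}, the pullback $\varphi_1^*\alpha_1$ is again $\mathbb{R}$-invariant and has the same dividing set and defining data as $\alpha_0$, so $\varphi_1^*\xi_1 = \xi_0$. Setting $\psi_t = \varphi_t$ and reversing time gives the desired $b$-isotopy. The only delicate step is ensuring that the Giroux isotopy preserves $Z$ and respects the coorientation of the dividing set; both are guaranteed by working with $\mathbb{R}$-invariant models and by the hypothesis that the isotopy $\Gamma_0 \rightsquigarrow \Gamma_1$ is orientation preserving.
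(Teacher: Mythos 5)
Your overall strategy --- reduce to the neighbourhood of the critical set via Proposition \ref{prop:bconvex}, translate to $\mathbb{R}$-invariant smooth contact forms via the $dz \leftrightarrow \frac{dz}{z}$ dictionary, and invoke the Giroux/Geiges uniqueness argument for convex surfaces --- is exactly the paper's, and the first two steps (isotoping $\Gamma_0$ to $\Gamma_1$ by a product isotopy, so that one may assume the dividing sets coincide) are fine. The gap is in the last step, where you run Moser's trick in the \emph{smooth} contact category and then try to transport the resulting isotopy back to the $b$-setting. This transfer does not work as stated, for two reasons. First, the Gray--Moser vector field for the family $\overline{\alpha}_t = u_t\,dz + \beta_t$ has, in general, a nonzero $\partial_z$-component along $Z$ (its $\ker dz$- and $dz$-components are coupled through $du_t$), so its flow need not preserve $Z$ at all; it is therefore not a $b$-isotopy. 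Second, and more fundamentally, the correspondence of Lemma \ref{lem:convexbcontact=convexcontact} is a formal substitution on $\mathbb{R}$-invariant \emph{data} $(u,\beta)$, not a diffeomorphism-equivariant identification of plane fields: away from $Z$ the distributions $\ker(u\,dz+\beta)$ and $\ker(\frac{u}{z}\,dz+\beta)$ are genuinely different, and for a general $Z$-preserving diffeomorphism $\varphi$ one has $\varphi^*\frac{dz}{z} \neq \frac{1}{z}\varphi^*dz$ unless $\varphi$ literally preserves the coordinate $z$. Hence $\varphi_1^*(\ker\overline{\alpha}_1)=\ker\overline{\alpha}_0$ does not imply $\varphi_1^*\xi_1=\xi_0$, and your claim that $\varphi_1^*\alpha_1$ is ``again $\mathbb{R}$-invariant with the same defining data as $\alpha_0$'' is unjustified.

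The paper avoids this by never leaving the $b$-category at the Moser stage: it uses the Geiges interpolation (writing $\beta_i=\iota_{X_i}\Omega_i$, normalising the area forms, and interpolating $u_t$ and $X_t$ linearly so that the divergence condition $\pm\operatorname{div}_\Omega X_t>0$ on $Z_\pm$ keeps the whole family contact) to produce a path of genuine $b$-contact forms $\alpha_t=u_t\frac{dz}{z}+\iota_{X_t}\Omega$ joining $\alpha_0$ to $\alpha_1$, and then applies Moser's path method for $b$-contact forms \cite[Theorem~5.11]{MO1}. There the Moser vector field is by construction a $b$-vector field, hence tangent to $Z$, and its flow is automatically the desired $b$-isotopy. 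To repair your argument you would need to carry out this interpolation explicitly (rather than citing Giroux's theorem as a black box, which only controls the smooth contact structures) and run Moser on the $b$-forms.
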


\begin{proof}
First, we can assume without loss of generality that the two $b$-contact structures have the same dividing set. Indeed, the isotopy from $\Gamma_1$ to $\Gamma_2$ can be extended to a $b$-isotopy $\Psi_t:\mathcal{N}(Z)\to \mathcal{N}(Z)$. The $b$-contact structure given by $d\Psi_1(\xi_0)$ and $\xi_1$ then have the same dividing set.

Let  $\alpha_i$, $i=0,1$ be two convex $b$-contact forms defining the $b$-contact structure (which exist due to Proposition \ref{prop:bconvex}), given in the neighborhood by
$$\alpha_i=u_i \frac{dz}{z}+\beta_i,$$
where $u_i\in C^\infty(Z)$ and $\beta_i\in \Omega^1(Z)$.

The proof of the proposition follows the proof of \cite[Theorem 4.8.11]{Ge}. In the tubular neighborhood, we define the following two smooth contact forms $\overline{\alpha}_i=u_idz+\beta_i$. Both contact forms define the same dividing set, and the characteristic foliation (as for smooth contact forms) defined by $\ker \beta_i$ is transverse to the dividing set.
More precisely, as in \cite[Theorem 4.8.11]{Ge}, we may assume that we have area forms $\Omega_i\in \Omega^2(Z)$ and vector fields $X_i$ on $Z$ ($i=0,1$), such that $Z_\pm=\{x\in Z|\pm \mathrm{div}_{\Omega_i} X_i>0\}$ and the divergence is zero on $\Gamma$. As $\Omega_i$ are two area forms, there exists a positive function $g: Z \to \mathbb{R}^+$ such that $\Omega_0=g\Omega_1$.
As $g\mathrm{div}_{g\Omega}(X)=\mathrm{div}_\Omega(gX)$ for $g:\Sigma \to \mathbb{R}^+$, we know that for $X_0'=gX_0$, $\pm\mathrm{div}_{\Omega_1}(X_1')>0$ on $Z_\pm$. Abusing notation, we will thus denote by $X_1$ the vector field $gX_1$ and by $\Omega$ the volume form $\Omega_1$. Consider the family of vector fields $X_t=tX_0+(1-t)X_1$, for $t\in [0,1]$. These vector fields will likewise satisfy the divergence condition $\pm \mathrm{div}_\Omega (X_t) > 0$ on $Z_\pm$. We thus find the family of contact forms $\overline{\alpha}_t=u_t dz+\iota_{X_t}\Omega$. Similarily, we define the path of $b$-contact forms by $\alpha_t:=u_t\frac{dz}{z}+\iota_{X_t}\Omega$. We apply Moser's path method for this family of $b$-contact forms (see \cite[Theorem 5.11]{MO1}) and get thus an isotopy $\psi_t:\mathcal{N}(Z) \to \mathcal{N}(Z)$ that preserves $Z$ and such that $d\psi_t (\ker \alpha_1)=\ker \alpha_t$.
\end{proof}

\section{Fully overtwisted $b$-contact structures}\label{sec:fot}

In this section, we will adapt the $h$-principle for overtwisted contact structures \cite{eli2} to a class of $b$-contact structures on three-manifolds. In Section \ref{sec:Reeb}, we will give a weaker statement for higher dimensional manifolds.

\subsection{Formal $b$-contact and $b$-Jacobi structures}

In order to prove an $h$-principle for $b$-contact structures, we first define the formal counterpart of $b$-contact structures, and its interpretation in terms of Jacobi geometry.

\begin{definition}
Let $(M,Z)$ be a $b$-manifold of dimension $2n+1$. A pair $(\alpha,\omega)$ of $b$-forms $\alpha \in {^b}\Omega^1(M)$ and $\omega \in {^b}\Omega^2(M)$ is called an \emph{almost $b$-contact structure} if $\alpha\wedge \omega^n >0$.
\end{definition}
 By analogy with the contact case and other settings where $h$-principles are studied, we will sometimes refer to an almost $b$-contact structure as a ``formal" $b$-contact structure, as it plays the role of a formal solution to our problem. An almost $b$-contact structure is hence equivalent to a corank $1$ subbundle in $^bTM$ equipped with a symplectic bundle structure. When the ambient manifold is three-dimensional, an almost $b$-contact structure is simply a coorientable $b$-plane field since any rank two vector bundle admits a homotopically unique fiberwise symplectic structure (or equivalently a fiberwise complex structure). By duality, there is also a formal notion of $b$-Jacobi structure in terms of multivector fields.

\begin{definition}\label{def:formalJacobi}
A \emph{formal $b$-Jacobi structure} on a $b$-manifold $(M,Z)$ of dimension $(2n+1)$ is a couple $(\Lambda,E)$ where $\Lambda$ is a $b$-bivector field and $E$ is a $b$-vector field such that $\Lambda^n \wedge E \neq 0$.
\end{definition}
Since there is the anchor map from the $b$-tangent bundle to the tangent bundle, one can look at a formal $b$-Jacobi structure directly in the smooth tangent bundle. In this case, it corresponds to a vector field $E'$ and bivector field $\Lambda'$ of the tangent bundle that are tangent along $Z$ (and hence define sections of the $b$-tangent bundle) and satisfy $E'\wedge (\Lambda')^n\pitchfork 0$ and $(E'\wedge (\Lambda')^n)^{-1}(0)=Z$. We point out that requiring only this transversality condition does not imply that $E'$ and $\Lambda'$ define sections of the $b$-tangent bundle, so this condition needs to be imposed. This condition is not new, it is already required for the same reasons when considering formal $b$-Poisson structures, as in \cite[Section 3]{FMM}. In the following lemma, we show that the two definitions are equivalent. 

\begin{lemma}
Formal $b$-Jacobi structures are in correspondence with formal $b$-contact structures.
\end{lemma}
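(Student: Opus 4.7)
The statement is the formal analog of Proposition \ref{prop:jacobi}, and should be proved fiberwise via elementary linear algebra on $^bTM$; the resulting sections are smooth because the constructions are pointwise-algebraic and depend smoothly on the input. In particular, no new $b$-geometry is needed beyond the observation that $^bTM$ is an honest vector bundle, so the $b$-structure plays no special role beyond ensuring that sections of $\bigwedge^{\bullet}{}^bTM$ make sense along $Z$.

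Starting from an almost $b$-contact pair $(\alpha,\omega)$, the condition $\alpha\wedge\omega^n>0$ forces $\alpha$ to be nowhere zero and $\omega|_{\ker\alpha}$ to be fiberwise symplectic (rank $2n$), so the radical $L:=\ker\omega^{\flat}\subset{}^bTM$ is a line bundle transverse to $\ker\alpha$. This gives a canonical splitting $^bTM=\ker\alpha\oplus L$. Let $E$ be the unique section of $L$ with $\alpha(E)=1$, and let $\Lambda\in\Gamma(\bigwedge^{2}\ker\alpha)\subset\Gamma(\bigwedge^{2}{}^bTM)$ be the bivector dual to $\omega|_{\ker\alpha}$ under the appropriate sign convention (so that $\Lambda^{\sharp}$ inverts $-\omega^{\flat}$ on $\ker\alpha$). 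Then $\Lambda^{n}$ is a nowhere vanishing section of $\bigwedge^{2n}\ker\alpha$, and $\Lambda^{n}\wedge E$ is therefore a nowhere vanishing section of $\bigwedge^{2n+1}{}^bTM$, so $(\Lambda,E)$ is a formal $b$-Jacobi structure.

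For the reverse direction, given $(\Lambda,E)$ with $\Lambda^{n}\wedge E\neq 0$, set $K:=\operatorname{im}(\Lambda^{\sharp})\subset{}^bTM$; the non-degeneracy condition forces $K$ to have rank $2n$, $\Lambda|_{K}$ to be fiberwise symplectic, and $E$ to be transverse to $K$, yielding the splitting $^bTM=K\oplus\mathbb{R}\langle E\rangle$. Define $\alpha$ by $\alpha(E)=1$ and $\alpha|_{K}=0$, and define $\omega$ as the inverse two-form of $\Lambda|_{K}$ on $K$, extended to $^bTM$ by $\iota_{E}\omega=0$. A direct fiberwise computation confirms $\alpha\wedge\omega^{n}>0$. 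The only point requiring some care, and the one I would expect to be the most technical, is checking that the two constructions are mutually inverse; this amounts to noting that the splittings $\ker\alpha\oplus L$ and $K\oplus\mathbb{R}\langle E\rangle$ used in the two directions coincide, which follows directly from the definitions of $E$ and of $\alpha$ in terms of the respective splittings.
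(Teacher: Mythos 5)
Your proof is correct and follows essentially the same route as the paper: both establish the correspondence by fiberwise duality on $^bTM$, defining $E$ by $\alpha(E)=1$, $\iota_E\omega=0$ and taking $\Lambda$ to be the bivector inverse to $\omega$ on the complementary rank-$2n$ subbundle (and symmetrically in the other direction). The only difference is presentational — the paper encodes the dual bivector via auxiliary Hamiltonian-type vector fields $X_h$, whereas you use the splitting $^bTM=\ker\alpha\oplus L$ directly — so no further comparison is needed.
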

\begin{proof} 
The correspondence between formal $b$-Jacobi structures and formal $b$-contact structures is given by duality on the $b$-tangent bundle, similarly to Proposition \ref{prop:jacobi} and \cite[Proposition 6.4]{MO1}. Indeed, let $(\alpha,\omega)$ be a formal $b$-contact structure in $(M,Z)$. Define $E$ as the vector field satisfying $\alpha(E)=1$ and $\iota_E\omega=0$. The $b$-multivector field $\Lambda$ is determined by $\Lambda (df,dg)= \omega(X_f, X_g)$, where given a function $h$ the vector field $X_h$ is determined by the equations
$$\begin{cases} \alpha(X_h)=h\\ \iota_{X_h}\omega= dh(E)\alpha - dh\end{cases} $$
Conversely, a formal $b$-Jacobi structure $(\Lambda, E)$ determines a formal $b$-contact structure. The one $b$-form $\alpha$ is determined by $\alpha(E)=1$ and $\Lambda(\alpha,\cdot)=0$, and the two $b$-form is determined by the equations $\iota_E\omega=0$ and $\omega(\Lambda(\gamma,\cdot),\cdot)=\gamma$ for any $\gamma \in {^b}TM$ such that $\gamma(E)=1$.
\end{proof}

We can now settle the existence of a formal $b$-contact structure, which only depends on the topology of the ambient manifold.

\begin{proposition}\label{prop:formalb}
$M$ admits a formal $b$-Jacobi structure on $(M,Z)$, where $Z$ is a separating hypersurface, if and only if $M$ is stable almost complex.
\end{proposition}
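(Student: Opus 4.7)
My plan is to translate the statement to the $b$-tangent bundle and exploit the stable isomorphism ${}^bTM \oplus \mathbb{R} \cong TM \oplus \mathbb{R}$ from Lemma \ref{lem:stabl}. Since the preceding lemma identifies formal $b$-Jacobi structures with formal $b$-contact pairs $(\alpha,\omega)$, the statement reduces to: $(M,Z)$ carries a nowhere-vanishing $\alpha \in {}^b\Omega^1(M)$ together with $\omega \in {}^b\Omega^2(M)$ satisfying $\alpha \wedge \omega^n > 0$ if and only if $M$ is stable almost complex. I will take as a black box the classical fact that, for an odd-dimensional $M^{2n+1}$, being stable almost complex is equivalent to $TM \oplus \mathbb{R}$ admitting a complex structure; the higher destabilization obstructions vanish for dimensional reasons.

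For the forward direction, given a formal $b$-contact pair $(\alpha,\omega)$, I would set $\xi = \ker \alpha \subset {}^bTM$. The condition $\alpha \wedge \omega^n \neq 0$ forces $\omega|_\xi$ to be non-degenerate, so $\xi$ is a symplectic vector bundle of rank $2n$. Since $\alpha$ trivializes the quotient ${}^bTM/\xi$, any choice of Reeb-type $b$-vector field splits the sequence and gives ${}^bTM \cong \xi \oplus \mathbb{R}$; hence ${}^bTM \oplus \mathbb{R} \cong \xi \oplus \mathbb{R}^2$ inherits a complex structure (the standard one on $\mathbb{R}^2$, a compatible one on $\xi$). By Lemma \ref{lem:stabl}, $TM \oplus \mathbb{R}$ is then complex, so $M$ is stable almost complex.

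For the reverse implication, assume $TM \oplus \mathbb{R}$ admits a complex structure; by Lemma \ref{lem:stabl} this transfers to a complex structure $J$ on ${}^bTM \oplus \mathbb{R}$. Let $e$ denote the canonical unit section of the $\mathbb{R}$-factor and fix a $J$-compatible Hermitian metric $h$. Then $Je$ is nowhere zero and $h$-orthogonal to $e$, so $L := \mathbb{R}\langle e, Je\rangle$ is a $J$-invariant real rank-$2$ subbundle, hence a complex line subbundle, trivial because it is globally generated by $e$. Its complex-orthogonal complement $L^\perp$ is a complex rank-$n$ subbundle, and both $Je$ and $L^\perp$ lie inside $e^{\perp}$. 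The projection $\pi \colon {}^bTM \oplus \mathbb{R} \to {}^bTM$ restricts to an isomorphism $e^{\perp} \cong {}^bTM$ (its kernel $\mathbb{R}$ meets $e^\perp$ trivially and the ranks agree), which identifies ${}^bTM \cong \mathbb{R}\langle Je\rangle \oplus L^{\perp}$: a trivial real line together with a symplectic rank-$2n$ bundle. Taking $\alpha$ dual to $Je$ and $\omega$ the symplectic form on $L^{\perp}$ (extended by $\iota_{Je}\omega = 0$) produces the desired formal $b$-contact pair.

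The main obstacle I foresee is confirming the obstruction-theoretic step reducing stable almost complex to the existence of a complex structure on $TM \oplus \mathbb{R}$ in odd dimensions, which I would invoke from the classical literature rather than reprove. The rest is routine bundle linear algebra once one isolates the key geometric move: any trivial real line inside a complex bundle spans, together with its $J$-image, a trivial complex line that can be split off complex-orthogonally.
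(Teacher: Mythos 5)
Your proof is correct and follows essentially the same route as the paper: the paper's main argument passes through the bivector $\pi^*R\wedge\partial_t+\pi^*\Lambda$ on $(M\times\mathbb{R},Z\times\mathbb{R})$, but the remark immediately after the proposition records precisely your bundle-level version (splitting $\ker\alpha\oplus\langle R,\partial_t\rangle$ one way and extracting $\alpha,\omega$ from a complex structure on ${}^bTM\oplus\mathbb{R}$ the other way), both resting on Lemma \ref{lem:stabl}. The only difference is that the paper actually carries out the obstruction-theoretic reduction of ``stable almost complex'' to a complex structure on $TM\oplus\mathbb{R}$ via the stable range of $\pi_q(SO(2m)/U(m))$, whereas you cite it as a black box.
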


\begin{proof}
First, it is easy to argue the well-known fact that $M$ is stable almost complex if and only if $M\times \mathbb{R}$ admits an almost complex structure. We include it for completeness, as we couldn't find the complete argument written down in detail in the literature. Let $2m-1$ be the dimension of $M$. An almost complex structure in $TM\times \mathbb{R} \times \mathbb{C}$ is the same as the existence of a trivial section of the associated $SO(2m+2)/U(m+1)$ bundle. Here we look at the bundle over $M$, but it is equivalent to looking at it as a bundle over $M\times \mathbb{R}$ since this space retracts to $M$. The obstruction to finding such a section lies in $\pi_{q} (SO(2m+2)/U(m+1)$ for $q$ up to $2m-2$ (one less of the dimension of $M$). However, in the stable range of the gamma groups $SO/U$ we have that for a fixed $q$ and variable $n$, the groups $\pi_q(SO(2n)/U(n)$ are all isomorphic for all $n$ as long as $q<2n-1$. In particular, we have $\pi_{q}(SO(2m+2)/U(m+1))\cong \pi_{q}(SO(2m)/U(m))$ for each $q=0,...,2m-2$. So the obstructions vanish either for both bundles or for none of them, so there exists an almost complex structure in $TM\times \mathbb{R} \times \mathbb{C}$ if and only if there exists one in $TM\times \mathbb{R}$. We conclude by applying this observation inductively to $TM\oplus \mathbb{R} \oplus \mathbb{C}^k$.\\

Now, assume that $M$ admits a formal $b$-Jacobi structure $(R,\Lambda)$. Consider the $b$-manifold $(M\times \mathbb{R}, Z\times \mathbb{R})$, denote by $t$ the $\mathbb{R}$ coordinate and by $\pi:M\times \mathbb{R} \longrightarrow M$ the projection to the first factor. The bivector field
$$ \Pi= \pi^*R\wedge \pp{}{t}+ \pi^*\Lambda $$ 
is a non-degenerate section of $\Lambda^2(^bT(M\times \mathbb{R}))$ that induces a fiberwise complex structure on $^bT(M\times \mathbb{R})$. As vector bundles over $M\times \mathbb{R}$ or over $M$, we have $^bT(M\times \mathbb{R})\cong {}^bTM\oplus \mathbb{R} \cong TM \oplus \mathbb{R}$. We deduce that the latter admits an almost complex structure.\\
 
For the other implication, if $TM\times \mathbb{R}$ admits an almost complex structure then so does {$^bT(M\times \mathbb{R})$, where $^bT(M\times \mathbb{R})$ denotes the $b$-tangent bundle of the $b$-manifold $(M\times \mathbb{R}, Z\times \mathbb{R})$.} This is equivalent to having a non-degenerate two $b$-form $\omega$. Then $(M\times \{0\}, Z\times \{0\})$ is a $b$-submanifold of $(M\times \mathbb{R}, Z\times \mathbb{R})$ {(meaning that $M\times \{0\}$ is a submanifold of $M\times \mathbb{R}$ transverse to the critical set $Z\times \mathbb{R}$)}. The inclusion $i:M\times \{0\}\hookrightarrow M\times \mathbb{R}$ induces a canonical map
$$^bi: {}^bTM \longrightarrow {}^bT(M\times \mathbb{R}),$$ 
and $^bi^*\omega$ defines a $b$-form of degree $2$ of the $b$-manifold $(M,Z)$ which is of maximal rank in $M$. In particular, its kernel is a one-dimensional line bundle of $^bTM$. Choosing any one $b$-form $\alpha$ that is positive along the kernel of $^bi^*\omega$, we obtain a formal $b$-contact structure $(\alpha,{}^bi^*\omega)$ and hence a formal $b$-Jacobi structure.
\end{proof}

 \begin{remark}
    Alternatively, Proposition \ref{prop:formalb} can be proved using formal $b$-contact structures and Lemma \ref{lem:stabl}. Given a formal $b$-contact structure $(\alpha,\omega)$ on $M$, the Reeb vector field $R\in \Gamma({^b}TM)$ is defined by the equations  $\alpha(R)=1,\iota_R\omega=0$. As $\omega$ is non-degenerate on $\xi=\ker \alpha$, we can define an almost complex structure on ${^b}TM\oplus \mathbb{R}_t$ by defining it on the splitting $\xi \oplus \langle R,\partial_t\rangle$. By Lemma \ref{lem:stabl}, this thus shows that $M$ is stable almost complex. Conversely, given an almost complex structure on $TM\oplus \mathbb{R}$, and by Lemma \ref{lem:stabl} on ${^b}TM\oplus \mathbb{R}$, we get a non-degenerate $2$-form $\omega\in {^b}\Omega^2(M\times \mathbb{R})$. The $b$-forms $\alpha:=i^*\iota_{\partial_t} \omega$ and $\beta:=i^*\omega$ define a formal $b$-contact structure on $M\times\{0\}\cong M$, where $i:M\times \{0\} \rightarrow M\times \mathbb{R}$ denotes the inclusion of the zero section.
\end{remark}

\subsection{Homological obstruction of dividing sets}
 Given a formal $b$-contact structure $(\tilde{\alpha},\tilde{\omega})$ on the $b$-manifold $(M,Z)$, we want to show that is homotopic to a genuine $b$-contact form $ \alpha$, through formal $b$-contact structures. As a $b$-contact structure \emph{prints} a dividing set on the critical hypersurface, the natural question arises of which sets can be realized as the dividing set of $\ker \alpha$. Therefore, in this subsection, we discuss the homological conditions that the dividing set of a $b$-contact structure has to satisfy. We will then prove in the following section that given a $b$-plane field and any multicurve $\Gamma$ on $Z$ satisfying these necessary homological conditions, the $b$-plane field can be homotoped (through $b$-plane fields) to a $b$-contact structure $\xi=\ker\alpha$ such that $\Gamma$ is the dividing set of $\xi$ along $Z$. 
\begin{remark}\label{rem:plane}
    In the three-dimensional case, proving that a formal $b$-contact structure is homotopic to a genuine $b$-contact {structure} is equivalent to proving that a coorientable $b$-plane field is homotopic to a $b$-contact structure through $b$-plane fields. This is because the set of almost $b$-contact structures defining a plane field is contractible. First, given a $b$-plane field $\eta$ we can always choose a one $b$-form $\alpha$ such that $\ker \alpha=\eta$ (and this choice is unique up to homotopy of non-vanishing one-forms defining $\eta$). We want to prove that the set of $b$-forms of degree $2$ that are non-degenerate along $\eta$ is contractible. To see this, recall first that a fiberwise non-degenerate two-form on $\eta$ determines an almost complex structure on it.  Furthermore, the set of fiberwise non-degenerate two-forms on $\eta$ adapted to any given almost complex structures is also contractible, see for instance \cite[Proposition 2.6.4 (ii)]{McS}. Extending a fiberwise non-degenerate two-form on $\eta$ to a $b$-form of degree $2$ of maximal rank in ${^b} TM$ amounts to specifying its kernel on ${^b}TM$, a choice that is unique up to homotopy. Thus, it is enough to show that the set of fiberwise almost complex structures in $\eta$ is contractible. This is easily seen as follows. An almost complex structure on $\eta$ corresponds to a section of the associated $SO(1)/U(1)$-bundle over $M$. But $SO(1)/U(1)$ is a point, and thus the space of sections is contractible.
\end{remark}

To introduce the homological obstruction of dividing sets, given a $b$-plane field $\eta$ in $(M,\hat Z)$, let $Z$ be a connected component of $\hat Z$ and denote by $e_Z(\eta)\in H^2(Z,\mathbb{Z})$ the Euler class of the vector-bundle over $Z$ with fiber $\eta$. Equivalently, this coincides with the Euler class of $\eta$ in $M$ restricted to $Z$. Recall that if $\eta$ is a $b$-plane field in $(M,Z)$, the associated Euler class $e_Z(\eta)$ defined by $\eta$ on $Z$ is a homotopy invariant. Having fixed an orientation of $Z$ (induced by the orientation on $M$) and a fundamental class $[Z]\in H_2(Z) \subset H_2(M,\mathbb{Z})$,  the Euler class $e_Z(\eta)$ is determined by the integer $\langle e_Z(\eta), [Z]\rangle$. {Given a set of dividing closed curves on $Z$, recall that by convention (see the introduction) it comes with a choice of the decomposition $Z=Z_+\cup Z_-$ defined as follows. A defining function for the set of dividing curves determines $Z_+$ (respectively $Z_-$) as the components of $Z \setminus \Gamma$ where $f$ is non-negative (respectively non-positive).}

\begin{lemma}\label{prop:euler}
    Let $\xi$ be a $b$-contact structure in $(M,\hat Z)$. Let $Z$ be a connected component of $\hat Z$ and $\Gamma$ its dividing set. Then
    $$\chi(Z_+)-\chi(Z_-)=\langle e_Z(\eta), Z\rangle.$$
\end{lemma}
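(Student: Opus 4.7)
The plan is to reduce the claim to the classical convex-surface Euler class formula in smooth contact $3$-manifolds. By Proposition \ref{prop:bconvex}, near $Z$ we may write $\xi=\ker\alpha$ for an $\mathbb{R}$-invariant $b$-contact form $\alpha = u\frac{dz}{z}+\beta$ on some tubular neighborhood $U\cong Z\times(-\varepsilon,\varepsilon)$. Fix any $0<\delta<\varepsilon$ and let $N$ be the connected component of $M\setminus Z$ containing $Z^*:=Z\times\{\delta\}$. By Lemma \ref{lem:convexdelta}, $Z^*$ is an ordinary convex surface of the smooth contact manifold $(N,\xi|_N)$, with dividing set $\Gamma\times\{\delta\}$ and with the induced decomposition $Z^*_\pm = Z_\pm\times\{\delta\}$.

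I would then invoke the classical Giroux--Kanda convex-surface identity: for a convex surface $\Sigma$ in a cooriented contact $3$-manifold with dividing set $\Gamma_\Sigma$,
$$\chi(\Sigma_+)-\chi(\Sigma_-) = \langle e(\xi|_\Sigma),[\Sigma]\rangle.$$
Applied to $Z^*\subset(N,\xi|_N)$, this immediately gives $\chi(Z_+)-\chi(Z_-)=\langle e(\xi|_N|_{Z^*}),[Z^*]\rangle$, so the lemma reduces to identifying this integer with $\langle e_Z(\xi),[Z]\rangle$ under the diffeomorphism $Z\cong Z^*$.

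To check this bundle identification, I would work directly with the $\mathbb{R}$-invariant model. At a point $p\in Z$, write a vector of ${^b}T_pM$ as $(X,c\nu)\in T_pZ\oplus\mathbb{R}\nu$, where $\nu$ is the canonical non-vanishing section of $\ker\pi|_Z$; then $\xi_p$ is cut out by the linear equation $\beta_p(X)+u(p)\,c = 0$. At the corresponding point $(p,\delta)\in Z^*$, the smooth plane $(\xi|_N)_{(p,\delta)}\subset T_pZ\oplus\mathbb{R}\partial_z$ is cut out by exactly the same equation. The fiberwise map $\nu\mapsto\partial_z$ therefore induces an isomorphism of oriented rank-two vector bundles $\xi|_Z\xrightarrow{\ \cong\ }\xi|_N|_{Z^*}$ covering $Z\cong Z^*$, so the Euler classes and their pairings with the fundamental class coincide.

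The proof is conceptually short once $b$-convexity has been secured; the main subtlety is the last step, since $e_Z(\xi)$ is defined on the $b$-tangent bundle side whereas the Giroux--Kanda identity lives on the smooth tangent bundle. The $\mathbb{R}$-invariant normal form of Proposition \ref{prop:bconvex} is precisely what allows one to replace the canonical section $\nu$ by $\partial_z$ without altering the oriented isomorphism class of the bundle, making the comparison transparent.
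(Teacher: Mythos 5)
Your proof is correct and follows essentially the same route as the paper: push off to a parallel convex copy $Z^*=Z\times\{\delta\}$ in the smooth region and apply the classical convex-surface formula of Giroux--Kanda--Honda. The only cosmetic difference is that you justify the transfer of the Euler class pairing by an explicit fiberwise bundle isomorphism $\nu\mapsto\partial_z$, whereas the paper simply notes that $\langle e(\eta),\cdot\rangle$ is unchanged when the representative of $[Z]\in H_2(M;\mathbb{Z})$ is replaced by $[Z^*]$; both are valid.
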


 This is the $b$-version of \cite[Lemma 9.2]{Honda}. 

\begin{proof}
 Let $\alpha=u\frac{dz}{z}+$ be a $\mathbb{R}$-invariant $b$-contact form defining $\xi$ in a neighborhood $U\cong Z\times (-\varepsilon,\varepsilon)$, which exists by virtue of Proposition \ref{prop:bconvex}. Now the evaluation $\langle e_Z(\eta),Z\rangle$ does not depend on the choice of representative of the class of $Z$ in $H_2(M,\mathbb{Z})$, so we can choose instead a parallel copy $Z^*=Z\times \{\delta\}$ of $Z$ completely contained in $Z\times (0,\varepsilon)$, where $\alpha$ defines a positive contact form. Then $\langle e_Z(\eta),Z\rangle= \langle e_Z(\eta),Z^*\rangle$. Since $Z^*$ is convex in the usual sense of contact geometry (as argued in the proof of Corollary \ref{cor:bgirouxcrit}) and its dividing set (in the sense of contact geometry) is $\Gamma\times \{\delta\}$, it follows from e.g. from \cite[Lemma 9.2]{Honda} (see also \cite[Lemma 4.1]{CO} and its discussion) that
$\langle e_Z(\eta),Z^*\rangle=\chi(Z_+)-\chi(Z_-),$
concluding the proof.
\end{proof}

\subsection{Fully overtwisted $b$-contact manifolds}

In this section, we establish an existence and classification theorem for $b$-contact structures that are {fully} overtwisted (i.e. contain overtwisted disks on each component of $M\setminus Z$), on closed $b$-manifolds of dimension three. {For the next statement, let us mention that if two $b$-contact structures $\xi_1,\xi_2$ in $(M,Z)$ have isotopic dividing sets $\Gamma_1, \Gamma_2$ along $Z$, then up to applying any $b$-isotopy that sends $\Gamma_1$ to $\Gamma_2$ and a second  $b$-isotopy using Proposition \ref{cor:dividingset},  we can assume that the image $\xi_1'$ of $\xi_1$ by this $b$-isotopic coincides with $\xi_2$ in a neighborhood of $Z$. In this situation, one can require (as a hypothesis) that $\xi_1'$ and $\xi_2$ are homotopic as $b$-plane fields relative to a neighborhood of $Z$. Notice that this will not depend on the $b$-isotopy that we used to identify the dividing sets and the neighborhoods of $Z$, and thus this is a hypothesis that only depends on $\xi_1$ and $\xi_2$.  When this is satisfied, we will say with a slight abuse of notation that $\xi_1, \xi_2$ have isotopic dividing sets and are homotopic as plane fields relative to $Z$.}

\begin{theorem}\label{thm:existencebcontacthprin}
Let $(M,Z)$ be a closed oriented $b$-manifold of dimension $3$  with $Z$ compact and let $\eta$ be a $b$-plane field. Then $\eta$ is homotopic through $b$-plane fields to a fully-overtwisted $b$-contact structure whose dividing set is any collection of separating curves in $Z$ such that $\chi(Z_+)-\chi(Z_-)=\langle e_Z(\eta), Z\rangle$. Furthermore, if two fully-overtwisted $b$-contact structures have isotopic dividing sets and are homotopic as plane fields relative to $Z$, then they are contact $b$-isotopic.
\end{theorem}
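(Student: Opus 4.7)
The plan is to reduce to Eliashberg's overtwisted $h$-principle applied on each connected component of $M \setminus Z$, using Proposition \ref{prop:bconvex} (every $b$-contact structure is convex) together with Lemma \ref{lem:convexbcontact=convexcontact} to translate between the $b$-category and smooth contact geometry near $Z$. The essential new ingredient beyond the smooth contact case is that one can build a convex $b$-contact model realizing any admissible dividing set semilocally, and that the obstruction to matching such a model with a given $b$-plane field is measured exactly by the Euler-class relation $\chi(Z_+) - \chi(Z_-) = \langle e_Z(\eta), Z\rangle$.

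First I would produce a semilocal model near $Z$. On a tubular neighborhood $U \cong Z \times (-\varepsilon, \varepsilon)$, I construct an $\mathbb{R}$-invariant $b$-contact form $\alpha_0 = u \frac{dz}{z} + \beta$ with $u^{-1}(0) = \Gamma$, $\pm u > 0$ on $Z_{\pm}$, and $i^{*}(u\, d\beta + \beta \wedge du)$ a positive volume form on $Z$; such a pair $(u,\beta)$ exists by the standard realization lemma in convex surface theory, since the partition $Z = Z_+ \sqcup \Gamma \sqcup Z_-$ satisfies the Giroux compatibility (coorientations and transversality). Let $\xi_0 := \ker \alpha_0$. By Lemma \ref{prop:euler} applied to $\xi_0$ one has $\langle e_Z(\xi_0), Z \rangle = \chi(Z_+) - \chi(Z_-)$, which by hypothesis coincides with $\langle e_Z(\eta), Z \rangle$. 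Since oriented plane fields over a closed surface are classified up to homotopy by their Euler class, $\xi_0$ and $\eta|_U$ are homotopic through $b$-plane fields, and after such a homotopy of $\eta$ I may assume the two agree on a (possibly smaller) neighborhood of $Z$.

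Next, for each connected component $N$ of $M \setminus Z$, Lemma \ref{lem:convexdelta} identifies $(U \cap N, \xi_0)$ with a genuine $\mathbb{R}$-invariant contact collar of a smooth convex surface $Z^{*} \subset \overline{N}$ with dividing set $\Gamma$; on the compact manifold $\overline N$ bounded by $Z^{*}$, the $b$-plane field $\eta|_{\overline N}$ is a smooth plane field already contact on a collar of $\partial \overline N$. I would then invoke Eliashberg's relative overtwisted $h$-principle \cite{eli2} to obtain a homotopy rel a smaller neighborhood of $\partial \overline N$ from this plane field to an overtwisted contact structure. Carrying the construction out on both components and splicing gives a fully overtwisted $b$-contact structure on $M$ with dividing set $\Gamma$ and homotopic to $\eta$ through $b$-plane fields, establishing existence.

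For uniqueness, let $\xi_0, \xi_1$ be two fully overtwisted $b$-contact structures with isotopic dividing sets and homotopic through $b$-plane fields relative to $Z$. Proposition \ref{cor:dividingset} supplies a $b$-isotopy supported near $Z$ after which $\xi_0$ and $\xi_1$ coincide on some tubular neighborhood of $Z$. On each connected component of $M \setminus Z$, Lemma \ref{lem:convexdelta} again recasts the remaining problem as classifying overtwisted contact structures rel convex boundary within a fixed homotopy class of plane fields, to which Eliashberg's relative classification applies; the resulting contact isotopies glue across $Z$ to the desired $b$-isotopy. The hardest point is making this last step precise: one needs the relative version of Eliashberg's uniqueness theorem on compact $3$-manifolds with convex boundary, and one must verify that homotopies of $b$-plane fields translate, under Lemma \ref{lem:convexbcontact=convexcontact}, into bona fide homotopies of smooth plane fields rel the convex collars, so that the convex-boundary hypothesis of the smooth $h$-principle is genuinely satisfied.
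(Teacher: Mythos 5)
Your proposal follows essentially the same route as the paper's proof: realize the prescribed dividing set by an $\mathbb{R}$-invariant $b$-contact model near $Z$ via the realization lemma of convex surface theory, match it to $\eta$ using the Euler-class hypothesis and the homotopy classification of plane fields over $Z$, and then apply Eliashberg's relative (resp.\ parametric relative) overtwisted $h$-principle on the components of $M\setminus Z$, with Proposition \ref{cor:dividingset} handling the neighborhood of $Z$ in the uniqueness part. The only details the paper supplies beyond your outline are the explicit bump-function cut-off making the semilocal homotopy supported in the tubular neighborhood, the preliminary isotopy arranging a common overtwisted disk disjoint from the collar (a hypothesis needed to invoke the parametric $h$-principle), and the final application of Gray stability for $b$-contact structures to convert the resulting path of structures into a genuine $b$-isotopy.
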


The idea of the proof is to construct a homotopy of $b$-plane fields in a neighborhood of $Z$ that turns $\eta$ into a $b$-contact structure whose dividing set is prescribed and satisfies the condition that $\chi(Z_+)-\chi(Z_-)=\langle e_Z(\eta), Z\rangle$. As we have seen in Lemma \ref{prop:euler}, this condition is necessary. Once this homotopy is done, the result follows from Eliashberg's relative $h$-principle. In particular, there are overtwisted disks contained in each connected component of $M\setminus Z$. Recall that such a $b$-contact structure is called \emph{fully overtwisted} (see Definition \ref{def:OT}).

We will now prove Theorem \ref{thm:existencebcontacthprin}.

\begin{proof}
As $\eta$ is a cooriented $b$-plane field, we can write $\eta= \ker \alpha$, where $\alpha \in {^b}\Omega^1(M)$ is non-vanishing. Let $z$ be a local defining function for $Z$ and $T$ be a tubular neighborhood around $Z$. Then $\alpha= u \frac{dz}{z}+\beta$, where $u \in C^\infty(T)$, $\beta\in \Omega^1(T)$ and $\iota_{\frac{\partial}{\partial z}}\beta=0$. 

\begin{lemma}\label{lem:bplane=plane}
Let $\Gamma$ be any collection of separating curves in $Z$ such that $\chi(Z_+)-\chi(Z_-)=\langle e_Z(\eta), Z\rangle$. In the tubular neighborhood $T$ around the critical set, there exists a homotopy $\alpha_t$ of $b$-forms such that
\begin{itemize}
    \item the kernel of $\alpha_t$ defines a $b$-plane field,
    \item $\alpha_0=\alpha$,
    \item  $\alpha_1$ is a convex $b$-contact form with dividing set $\Gamma$.
\end{itemize}
\end{lemma}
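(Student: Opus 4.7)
The plan is to construct a model $\mathbb{R}$-invariant $b$-contact form $\tilde\alpha$ on $T$ whose dividing set is $\Gamma$, and then connect $\alpha$ to $\tilde\alpha$ through non-vanishing cooriented $b$-one-forms via an Euler class obstruction argument.

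\emph{Construction of $\tilde\alpha$.} By Lemma \ref{lem:convexbcontact=convexcontact}, producing an $\mathbb{R}$-invariant $b$-contact form on $T\cong Z\times(-\varepsilon,\varepsilon)$ with dividing set $\Gamma$ is equivalent to producing an $\mathbb{R}$-invariant smooth contact form with the same dividing set. This is the standard construction from convex surface theory: choose $\tilde u\in C^\infty(Z)$ vanishing transversely on $\Gamma$, positive on $Z_+$ and negative on $Z_-$ in accordance with the coorientation; then assemble a one-form $\tilde\beta\in\Omega^1(Z)$ from outward-pointing Liouville-type forms on each component of $Z_\pm$, matched along $\Gamma$, so that the contact-type inequality $\tilde u\, d\tilde\beta+\tilde\beta\wedge d\tilde u>0$ holds as a positive area form on $Z$. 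Then $\tilde\alpha:=\tilde u\,\frac{dz}{z}+\tilde\beta$ is the desired $\mathbb{R}$-invariant (in particular convex) $b$-contact form on $T$ with dividing set $\Gamma$.

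\emph{Euler class comparison.} The neighborhood $T$ deformation retracts onto $Z$, so cooriented $b$-plane fields on $T$ are classified up to homotopy by their restrictions to $Z$, i.e., by cooriented rank-two subbundles of ${}^bTM|_Z$. By Proposition \ref{prop:TM=bTM} together with $\chi(M)=0$, the bundle ${}^bTM$ is trivial, hence ${}^bTM|_Z$ is a trivial oriented rank-three bundle over the closed oriented surface $Z$; its cooriented rank-two subbundles are classified by the Euler class in $H^2(Z,\mathbb{Z})$. Lemma \ref{prop:euler} applied to $\tilde\eta:=\ker\tilde\alpha$ yields $\langle e_Z(\tilde\eta),Z\rangle=\chi(Z_+)-\chi(Z_-)$, which by the hypothesis on $\Gamma$ equals $\langle e_Z(\eta),Z\rangle$. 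Therefore $\eta|_T$ and $\tilde\eta|_T$ are homotopic as cooriented $b$-plane fields.

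\emph{Lifting to a homotopy of $b$-one-forms.} A cooriented $b$-plane field on $T$ admits a defining non-vanishing cooriented $b$-one-form, unique up to multiplication by a positive smooth function, so the space of such defining forms over a fixed plane field is contractible. Consequently, any homotopy of cooriented $b$-plane fields lifts to a homotopy of non-vanishing cooriented $b$-one-forms; applying this to the homotopy obtained in the previous step produces the required family $\alpha_t$ with $\alpha_0=\alpha$, $\alpha_1=\tilde\alpha$, and non-vanishing $b$-kernel throughout. The main subtlety lies in the first step, namely producing $\tilde\beta$ with the correct positivity on every component of $Z\setminus\Gamma$ and matching boundary behaviour along $\Gamma$; this is the standard Giroux construction of an $\mathbb{R}$-invariant neighborhood of a convex surface with prescribed dividing set, transferred to $b$-geometry via Lemma \ref{lem:convexbcontact=convexcontact}.
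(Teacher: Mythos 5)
Your proposal is correct and follows essentially the same route as the paper's proof: build an $\mathbb{R}$-invariant model $b$-contact form realizing $\Gamma$ (the paper does this by choosing a suitable Morse function and invoking \cite[Theorem 1.1]{CO}, which is exactly the Giroux-type construction you sketch), match Euler classes over $Z$ using the hypothesis $\chi(Z_+)-\chi(Z_-)=\langle e_Z(\eta),Z\rangle$ to get a homotopy of ($b$-)plane fields on the retractible neighborhood $T$, and lift it to a homotopy of non-vanishing defining one-forms via the $dz\leftrightarrow\frac{dz}{z}$ correspondence. The only difference is organizational (the paper phrases the whole argument as conditions on a family of pairs $(u_s,\beta_s)$), not mathematical.
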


\begin{proof}
To prove the lemma, it is sufficient to find a smooth family $(u_s,\beta_s) \in C^\infty(T)\times \Omega^1(T)$ with $s\in I=[0,1]$, for each $s$ we have $\iota_{\pp{}{z}}\beta_s=0$, and such that
\begin{enumerate}
\item $(u_0,\beta_0)=(u,\beta)$,
\item $\{u_s=0\}\cap \{\beta_s=0\}=\emptyset$ for each $s\in I$,
\item $(u_1,\beta_1)$ are such that $u_1\in C^\infty(Z)$, $\beta_1\in \Omega^1(Z)$,
\item $u_1 d\beta_1+d\beta_1\wedge u_1 \neq 0$.
\item the zero set of $u_1$ is $\Gamma$. 
\end{enumerate}

Given such a family of functions and smooth forms on $T$, the family of $b$-form defined by $\alpha_t=u_t\frac{dz}{z}+\beta_t$ defines the desired homotopy: the first condition assures that $\alpha_0=\alpha$, the second one assures that $\ker \alpha_t$ is a $b$-plane field, and the last three ones that $\alpha_t$ is a convex $b$-contact form as stated in the lemma.

To prove the existence of $(u_s,\beta_s)$, the {observation is that} near $Z$, smooth plane fields and $b$-plane fields are in correspondence, and that the contact and $b$-contact condition coincide for $\mathbb{R}$-invariant structures. {This (non-canonical) identification works as follows:} We think of the pair $(u,\beta)$ as a one form $\tilde{\alpha}=udz+\beta$ such that $\tilde{\alpha}\neq 0$ defined on a neighborhood $T\cong Z\times (-\varepsilon,\varepsilon)$ of $Z$, where $z$ is as before a coordinate in the second component of $T$. The one form defined a plane field since $\eta$ was a $b$-plane field and so 
$$\{x\in T \enspace| \enspace u_x=0 \text{ and } \beta_x=0\} = \emptyset.$$

Given any set of separating curves $\Gamma$ in $Z$ such that $\chi(Z_+)-\chi(Z_-)=\langle e_Z(\eta),Z\rangle$, there exists a Morse function $u_1$ such that $\Gamma=u_1^{-1}(0)$, which is positive in $Z_+$, negative in $Z_-$ and that does not have any minimum with positive value nor a maximum with negative value. This is easily seen by considering an embedding of $e:\Gamma \rightarrow \mathbb{R}^3$, with coordinates $(x_1,x_2,x_2)$ such that
\begin{itemize}
    \item $u_1=e^*x_3$ is Morse,
    \item $e(\Sigma)\cap \{x_3=0\}=\Gamma$,
    \item $e(\Sigma)\cap\{x_3>0\}= Z_+$, $e(\Sigma)\cap \{x_3<0\},$
\end{itemize}
see for instance \cite[Fig. 4]{CO}. It then follows from \cite[Theorem 1.1]{CO} that there exists a $\mathbb{R}$-invariant contact form that writes $\tilde \alpha_1=u_1dz+\beta_1$. The Euler class of $\ker \tilde \alpha_1$ evaluated at $[Z]$ is given by  $\chi(Z_+)-\chi(Z_-)$ as per \cite[Lemma 9.2]{Honda} (or \cite[Lemma 4.1]{CO}). Recall that the homotopy class of a plane field defined near $Z$ is determined by its Euler class. Since $\tilde{\alpha}$ and $\tilde{\alpha}_1$ have the same Euler class, there is a homotopy of plane fields $\eta_t$ such that $\eta_0=\ker \tilde{\alpha}$ and $\eta_1=\ker \tilde{\alpha}_1$. This homotopy is generated by a smooth homotopy of defining one-forms $\lambda_t$, and we can easily assume that $\lambda_0=\tilde{\alpha}$ and $\lambda_1=\tilde{\alpha}_1$. Writing $\lambda_t$ as $u_tdz+\beta_t$,  the family $(u_t,\beta_t)$ satisfies $(1)$-$(5)$, and thus the family of $b$-forms given by $\alpha_t=u_t\frac{dz}{z}+\beta_t$ satisfies the statement of the Lemma.
\end{proof}
  
Given a set of separating curves in $Z$, we define the family of $b$-plane fields in $T$ given by the kernel of the $b$-forms
$$\overline{\alpha}_\tau=u_{\tau \cdot \varphi(z)}\frac{dz}{z}+\beta_{\tau \cdot \varphi(z)}, \enspace \tau \in [0,1]$$
where $u_s$ and $\beta_s$ are the families obtained in Lemma \ref{lem:bplane=plane}, and $\varphi:(-\epsilon,\epsilon)\to [0,1]$ is a smooth bump function such that
$$\begin{cases} \varphi(z)=0  \text{ for } z\in [\epsilon-\delta,\epsilon), \\
\varphi(z)=1 \text{ for } z \in [0,\delta),\\
\varphi(z)=\varphi(-z),
 \end{cases}$$
 for $\delta \in (0,\epsilon)$ small. The kernel of $\overline{\alpha}_\tau$ is clearly a $b$-plane field because $\{x\in M \enspace| \enspace (u_s)_x=0 \text{ and } (\beta_s)_x=0\} = \emptyset$ for all $s\in [0,1]$, so the same holds for $u_{\tau \cdot  \varphi(z)}, \beta_{\tau \cdot \varphi(z)}$ if we choose $\epsilon$ small enough.

 On the other hand $\overline{\alpha}_1$ is $b$-contact in $T'= Z\times (-\delta,\delta)$. Indeed, we have 
  $$\overline{\alpha}_1 \wedge d\overline{\alpha}_1 = \frac{dz}{z}\wedge (du_1\wedge \beta_1+u_1d\beta_1),$$
  which is non-vanishing in $T'$ by construction. Furthermore, the $b$ one-form $\overline{\alpha}_1$ agrees with $\alpha$ in $T\setminus T'$. Hence $\ker \overline{\alpha}_1$ extends to $M$ as $\ker\alpha$, is homotopic to $\ker \alpha$ and is of $b$-contact type near $Z$. To conclude the proof of the theorem, we apply the relative $h$-principle for overtwisted contact structures \cite{eli2} to homotope the plane field given by the restriction of $\ker \overline{\alpha}$ to $M\setminus T'$ to a contact structure that agrees with $\ker \overline{\alpha}$ near $T'$. We hence obtain a global $b$-contact structure on $M$ that is homotopic to $\eta$ and induces $\Gamma$ as the dividing set on $Z$. This proves the first part of the theorem. \\

  For the second part, we will use that the $h$-principle for overtwisted contact structures is parametric. Let $\xi_0, \xi_1$ be two $b$-contact structures on a closed $b$-manifold of dimension three $(M,Z)$, and assume they have isotopic dividing sets. By Proposition \ref{cor:dividingset}, the $b$-contact structures are $b$-isotopic on an neighborhood $U\cong Z\times (-\varepsilon,\varepsilon)$ of the critical set $Z$. By considering any global extension of this semi-local $b$-isotopy, we can assume that $\xi_0$ and $\xi_1$ coincide near $Z$. Let $N$ be any connected component of $M\setminus U'$, where $U'\cong Z\times (-\delta,\delta)$ for some $\delta<\varepsilon$. It is a connected three-dimensional manifold with boundary. By assumption, the $b$-contact structures $\xi_0, \xi_1$ induce on $N$ two contact structures that coincide near a neighborhood $V= (U\setminus U')\cap N$ of $\partial N$, they both admit some (possibly different) overtwisted disk in the interior of $N$, and $\xi_0, \xi_1$ are homotopic through formal contact structures relative to $V$ (i.e. through a homotopy fixed in $U'\cap N$. Recall that the set of embedded disks on $N$ is path-connected by compactly supported isotopies in its interior. Hence, if $\Delta_0$ and $\Delta_1$ are the embedded overtwisted disks in $N$ of $\xi_0$ and $\xi_1$ respectively, we can find a compactly supported isotopy of $N$ that sends $\Delta_0$ to $\Delta_1$. Hence, up to isotopy, we can assume that $\xi_0, \xi_1$ both admit a fixed overtwisted disk $\Delta \subset N \setminus V$. Observe that $N\setminus V$ is connected, and the fixed overtwisted disk $\Delta$ lies in the complement of $V$. This allows us to apply the parametric version of the $h$-principle for overtwisted contact structures relative to a domain \cite[Theorem 1.2]{bem} (the three-dimensional case as in \cite{eli2} is enough, but we refer to the very concrete statement in \cite{bem} for the sake of clarity) implies that there exists a path of contact structures $\xi_t$ with $t\in[0,1]$ such that $\xi_t|_{V}=\xi_0|_{V}$. We can apply this argument to each connected component of $M\setminus U'$, so in fact there is a global path of contact structure $\xi_t$ in $M\setminus U'$ that is constant near the boundary. This gives a global path of $b$-contact structures since we already argued that $\xi_0|_{U'}=\xi_1|_{U'}$. By Gray's stability for $b$-contact structures, we obtain a $b$-isotopy $\varphi_t : (M,Z) \longrightarrow (M,Z)$ such that $(\varphi_1)_*(\xi_0)=\xi_1$, as claimed.
\end{proof}

 \begin{remark}\label{rem:Nec}
 It follows from the discussion in the proof of Lemma \ref{lem:bplane=plane} that if there is $b$-contact structure $\xi$ on $(M,Z)$ inducing a dividing set $\Gamma$, then $\chi(Z_+)-\chi(Z_-)=\langle e_Z(\xi), Z \rangle$. Indeed, the nearby parallel copies of $Z$ are convex surfaces in the usual sense of contact geometry and have the same dividing set as $Z$. Then the equality above follows from the homotopical classification of convex surfaces in terms of its dividing set \cite{CO, Honda}. 
 \end{remark}

\subsection{Tight $b$-contact structures}
The existence of tight contact structures in three-manifolds is much more subtle than that of overtwisted ones: it was shown in \cite{EH} that there exist closed three-manifolds $M$ that do not admit tight contact structures. Using deep results known in contact topology, we will deduce that the existence of tight $b$-contact structures on a given $b$-manifold is an intricate question as well.

\begin{proposition}\label{prop:notight}
    There exists a $b$-manifold $(M,Z)$ that admits no tight $b$-contact structure.
\end{proposition}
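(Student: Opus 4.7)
My plan is to leverage the Etnyre--Honda result \cite{EH} that there exists a closed orientable three-manifold $N$ admitting no tight contact structure, and turn it into a $b$-obstruction. Take $M=N$ and let $Z\subset M$ be an embedded two-sphere bounding a small three-ball $B\subset M$; then $Z$ is separating and $(M,Z)$ is an orientable $b$-manifold. The idea is that any putative tight $b$-contact structure on $(M,Z)$ would, after cutting along a parallel convex copy of $Z$ and filling in a standard tight ball, yield a tight contact structure on $N$, contradicting the choice of $N$.

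First, assume for contradiction that $\xi$ is a tight $b$-contact structure on $(M,Z)$. I would analyze the dividing set $\Gamma\subset Z$: it is nonempty by the corollary following Definition \ref{def:bdividingset}, and since every simple closed curve on $S^{2}$ is contractible, the $b$-Giroux criterion (Corollary \ref{cor:bgirouxcrit}) forces $\Gamma$ to be connected, i.e.\ a single embedded circle --- otherwise overtwisted disks appear in a neighborhood of $Z$, contradicting tightness. Next I would apply Lemma \ref{lem:convexdelta} to the component of $M\setminus Z$ diffeomorphic to $N\setminus B^{\circ}$: a parallel copy $Z^{*}=Z\times\{\delta\}$ is a smooth convex surface, with dividing set (in the usual sense) a single circle, bounding a compact contact three-manifold $\overline{M}_{-}$ that is tight (any overtwisted disk inside would sit in $M\setminus Z$, contradicting tightness of $\xi$).

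Finally I would close this up: by Eliashberg's classification of tight contact structures on the three-ball \cite{eli3}, there is, up to isotopy, a unique tight contact structure on $B^{3}$ with convex boundary whose dividing set is a single circle, so we may glue such a ball to $\overline{M}_{-}$ along $Z^{*}$ matching dividing sets. By the standard gluing theorem for tight contact manifolds along a convex two-sphere with connected dividing set (a theorem of Colin), the resulting closed contact manifold is tight. Since the underlying smooth manifold is $N$, this contradicts the defining property of $N$, and therefore $(M,Z)$ admits no tight $b$-contact structure.

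The main obstacle I foresee is the transition from the $b$-contact picture near $Z$ to the smooth convex surface picture needed to invoke Colin's gluing theorem; Lemma \ref{lem:convexdelta} is precisely the bridge that supplies a genuine convex sphere $Z^{*}$ with the correct dividing set inside the smooth contact manifold $M\setminus Z$, so that the ball-filling and gluing step becomes a standard application of known results in three-dimensional contact topology.
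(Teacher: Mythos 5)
Your proposal is correct and follows the same overall architecture as the paper's proof: start from the Etnyre--Honda manifold, take $Z=S^2$ bounding a ball, use the $b$-Giroux criterion (Corollary \ref{cor:bgirouxcrit}) to force a connected dividing set, pass via Lemma \ref{lem:convexdelta} to a genuine convex sphere $Z^*$ in $M\setminus Z$, identify the tight ball side by Eliashberg's Theorem \ref{thm:classificationball}, and cap off to produce a smooth tight contact structure on $M$, contradicting the choice of $M$. The one place where you diverge is the final tightness claim for the capped-off structure: you invoke Colin's gluing theorem for tight contact manifolds glued along a convex sphere with connected dividing set, whereas the paper gives a more elementary, self-contained argument --- if the glued structure were overtwisted, the transitivity of Darboux balls under contact isotopy would let one move the filling ball off an overtwisted disk, so that its complement (which is the tight piece $V'$) would still contain an overtwisted disk, a contradiction. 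Both justifications are valid; yours outsources the work to a standard but deeper theorem, while the paper's avoids citing Colin at the cost of a short ad hoc argument. In either version one should note, as both you and the paper do implicitly, that the Etnyre--Honda manifold must be taken to admit no tight contact structure for \emph{either} coorientation (e.g.\ $\Sigma(2,3,5)\#-\Sigma(2,3,5)$), since the capped-off structure may be negative with respect to the ambient orientation.
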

\begin{proof}
    By the main result in \cite{EH}, there exists a closed three-manifold $M$ that admits no tight contact structure. Let $B$ be a ball embedded in $M$, and $Z\cong S^2$ its boundary. Assume that $(M,Z)$ admits a tight $b$-contact structure $\xi$, and we will reach a contradiction. First, a neighborhood of $Z$ has to be tight which by Corollary \ref{cor:bgirouxcrit} implies that the dividing set of $Z$ is connected. The connected component $V=B\setminus Z$ of $M\setminus Z$ is a three-ball, and by considering a $\mathbb{R}$-invariant $b$-contact form near $Z$, we can find a contact convex sphere in the contact manifold $(\tilde Z\subset V, \xi)$. Since each component of $M\setminus Z$ is a tight contact manifold by hypothesis, \cite[Theorem 2.1.3]{eli3} implies that the ball $U \subset B$ whose boundary is given by $\tilde Z$ equipped with $\xi|_U$ is contactomorphic to a standard Darboux ball (either positive or negative depending on the sign of the component $V$). We can find as well a sphere $Z'$ close to $Z$, contained in the other connected component of $M\setminus Z$ (i.e. the set $M\setminus B$) which is a (standard) convex sphere. Let $V'$ be the connected component of $M\setminus Z'$ that does not contain $B$. The manifold $(V',\xi|_{V'})$ is another tight contact manifold whose sign is the opposite of that of $V$. Since the boundary $Z'$ is a standard convex sphere, we can fill the complement of $V'$ by a standard Darboux ball of the same sign as $\xi|_{V'}$, obtaining a contact structure $\xi'$ in $M$ instead of a $b$-contact structure. \\
    
    We claim that $\xi'$ is tight. Indeed we know that $(V',\xi')$ is tight, and that $(\overline{M\setminus V'},\xi')$ is diffeomorphic to a standard Darboux ball. Let us show that $(M,\xi')$ is a tight contact manifold.
    
    Assume that $(M,\xi')$ is overtwisted. For any pair of Darboux balls, we know that there exists an isotopy of contactomorphisms of $(M,\xi')$ sending one to the other, see for example \cite[Theorem 2.6.7]{Ge}. Choose as one ball $M\setminus V'$, and as another ball a Darboux ball $B_2$  disjoint from an overtwisted disk in $M$. Since removing $B_2$ yields an overtwisted contact manifold with boundary, removing $M\setminus V'$ yields as well an overtwisted contact manifold. We know that this is not the case since $(V', \xi|_{V'})$ is tight, reaching a contradiction and proving that $(M,\xi')$ is a tight contact manifold. \\
    
     We have thus shown that if $(M,Z)$ admits a tight $b$-contact structure, then $M$ admits a tight contact structure. This is not the case, finishing the proof.
\end{proof}

Given a $3$-manifold $M$ that does not admit tight contact structure, we merely give the example of a hypersurface $Z$ (diffeomorphic to $S^2$) such that the $b$-manifold $(M,Z)$ does not admit any tight $b$-contact structure neither. It would be interesting to see if one could still construct a hypersurface on $M$ such that it does admit a tight $b$-contact structure.

\begin{remark}\label{rem:surgery}
    {Notice that to reach a contradiction in the proof of Proposition \ref{prop:notight}, we construct a contact structure on $M$ from a $b$-contact structure. More generally, this can be done under the following hypotheses. Let $\xi$ be a $b$-contact structure in $(M,Z)$, and decompose $M$ into $M=M_-\cup M_+$ where $M_-$ and $M_+$ are the closure of the components of $M\setminus Z$ where $\xi$ induces a negative (respectively positive) contact structure. We can think of $M_+$ and $M_-$ as contact manifolds with convex boundary (by removing a small neighborhood of $Z$ from them), and the boundary components of $M_+$ and $M_-$ are paired according to the component of $Z$ that they correspond to. Each such pair has the same dividing set. If there exists a positive contact structure $\xi'$ in $M_-$ that is convex along the boundary and has the same dividing set as the boundary of $M_+$, then we can replace $(M_-,\xi)$ by $(M_-,\xi')$ and glue it to $(M_+,\xi)$ to obtain a (genuine) contact structure in $M$. In the proof above, the manifold $(M_-,\xi')$ is a standard Darboux ball.}
\end{remark}

\section{Classifications up to $b$-isotopy}\label{sec:class}
Having dealt with the existence results for $b$-contact structures in the last sections, we now turn our attention to the classification of $b$-contact structures. More precisely, given a $b$-manifold $(M,Z)$, we study how many $b$-contact structures there are equivalent up to $b$-isotopy. As $M\setminus \mathcal{N}(Z)$ consists of a union of contact manifolds with boundary, the existing classification results for tight contact manifolds with boundary are relevant for the classification. The most well-known result is the following:

\begin{theorem}[Theorem 2.1.3 in \cite{eli3}]\label{thm:classificationball}
Two tight contact structures on the $3$-ball that coincide at the boundary are isotopic (relative to the boundary).
\end{theorem}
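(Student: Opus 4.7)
The plan is to reduce the problem, step by step, from the full ball to an analysis of a spherical shell where convex surface theory applies.

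First I would arrange that $\xi_0$ and $\xi_1$ coincide on an open collar of $\partial B^3$, not just on $\partial B^3$ itself. Since they agree on $\partial B^3$, their characteristic foliations on $\partial B^3$ are the same, and hence by the classical theorem that the germ of a contact structure along a surface is determined up to isotopy (fixing the surface) by its characteristic foliation (a Moser-type argument, as in \cite[\S 2.5]{Ge}), there is an isotopy of $B^3$ fixed on $\partial B^3$ that makes $\xi_0=\xi_1$ on a collar $\partial B^3\times[0,\varepsilon)$. So from here on I assume the two structures already agree near the boundary.

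Second, I would align the structures around an interior point. Pick $p$ in the interior of $B^3$. By Darboux's theorem, $\xi_0$ and $\xi_1$ are each standard on small open neighborhoods of $p$; a diffeomorphism of $B^3$ supported in the interior (and away from the boundary collar) aligns these neighborhoods so that $\xi_0=\xi_1$ on a small Darboux ball $U^p$ around $p$. The problem then reduces to producing an isotopy of the shell $S:=B^3\setminus U^p\cong S^2\times[0,1]$, relative to $\partial S$, intertwining the two induced tight contact structures, which agree on both boundary spheres.

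Third, I would exploit convex surface theory on the shell. After a $C^\infty$-small perturbation of the product foliation, each sphere $S^2\times\{t\}$ can be made convex (Theorem \ref{thm:giroux}). The crucial input from tightness is that every convex sphere in a tight contact manifold has a \emph{connected} dividing set: any additional component would bound a disk in $S^2$, and the Giroux criterion (which is used in the $b$-version in Corollary \ref{cor:bgirouxcrit}) would produce an overtwisted disk. Hence on every generic convex sphere inside $S$, the dividing set is a single circle, and a neighborhood of such a sphere is contactomorphic to a standard $\mathbb{R}$-invariant model determined, up to isotopy, by that single dividing circle.

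Finally, I would assemble these local rigidity statements into a global isotopy by choosing a generic one-parameter family of spheres simultaneously convex for $\xi_0$ and for $\xi_1$ (after a preliminary isotopy), matching the dividing circles on corresponding spheres via an ambient isotopy of $S^2\times[0,1]$ fixed at $t=0,1$, and then using Gray stability to promote the matching of dividing data into an honest contact isotopy relative to $\partial S$. The main obstacle here is precisely this parametric/global step: pointwise rigidity gives only isotopies of germs, and turning these into a single isotopy of $S$ fixed on both boundary components requires a uniform control on the one-parameter family of dividing circles, typically obtained either by decomposing the shell into standard layers in which the dividing set changes in a controlled way (a Colin/Honda-type decomposition) or by invoking the uniqueness of tight contact structures on $S^2\times[0,1]$ with the prescribed connected dividing curves on each slice.
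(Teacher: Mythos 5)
The paper does not prove this statement: it is Eliashberg's uniqueness theorem for tight contact structures on the ball, imported verbatim as Theorem 2.1.3 of \cite{eli3} and used as a black-box input to the classification results of Section \ref{sec:class}. There is therefore no internal proof to compare yours against; your proposal has to stand on its own as a proof of Eliashberg's theorem, and as written it does not.

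Your first three steps are sound: making the structures agree on a collar via the characteristic foliation, aligning Darboux balls at an interior point, and observing that tightness forces every convex sphere to have connected dividing set by the Giroux criterion. The gap is exactly where you flag it, and it is not a technicality --- it is the entire content of the theorem. Two concrete problems. First, you cannot arrange a one-parameter family of spheres $S^2\times\{t\}$ to be \emph{simultaneously} convex for all $t$: Theorem \ref{thm:giroux} is a statement about a single surface, and genericity in a family only gives convexity away from finitely many values of $t$; the analysis of what happens at the non-convex times (Giroux's bifurcation/``movie'' analysis, or equivalently Honda's bypass attachments) is precisely the hard part. Second, your fallback --- ``invoking the uniqueness of tight contact structures on $S^2\times[0,1]$ with the prescribed connected dividing curves on each slice'' --- is essentially equivalent to the statement being proved (glue a standard Darboux ball onto one end and you recover the theorem for the ball), so the argument as stated is circular. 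To close the gap one needs either the key lemma that every bypass attached to a convex sphere with connected dividing set is trivial, so that the dividing data cannot change along the family, or Eliashberg's original elimination-of-singularities argument on the characteristic foliations of the family of spheres. Neither is supplied, so the proposal is an outline of the known strategy rather than a proof.
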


A more recent result was obtained for the solid torus, building on previous results \cite{H1}. A dividing set in the boundary of a tight solid torus is parametrized by $(n,-p,q)$, where $0 < q \leq p$ and $\operatorname{gcd}(p, q) = 1$. It means that having fixed a longitude and a meridian of the boundary of the solid torus, the dividing set is given by $2n$ curves whose homology class is given by $p$ times the longitude $S^1\times \{0\}$ of the solid torus and $q$ times the meridian $\{0\}\times \partial D^2$ of the solid torus. Define the integer numbers $r,s$ to be equal to $1$ if $(p,q)=1$, and otherwise, they depend on the continued fraction expansion of $-p/q$ as follows. Let $r_0,r_1,...,r_k$ denote the coefficients of the (negative) continued fraction expansion of $-p/q$, then 
\begin{align*}
r&=|(r_0+1)(r_1+1)...(r_{k-1}+1)r_k |,\\ s&=|(r_0+1)(r_1+1)...(r_{k-1}+1)(r_k+1)|.
\end{align*}
Then the first classification in \cite{H1} can be improved to an enumeration of the isotopy classes.
\begin{theorem}[\cite{LZ}]
Let  $M = S^1 \times D^2$ be a solid torus. Let $\Gamma$ be a dividing set on $\partial M$ parametrized by $(n, -p, q)$, where $0 < q \leq p$ and $\operatorname{gcd}(p, q) = 1$. Let the pair of integers $(r, s)$ be defined as above. Then the number of isotopy classes of tight contact structures on M with dividing set $\Gamma = (n, -p, q)$ is precisely
$$N(n, -p, q) = C_n((r - s)n + s),$$
where $C_n$ is the $n$-th Catalan number $\frac{1}{n+1} {2n \choose n}$.
\end{theorem}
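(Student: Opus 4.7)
The plan is to mimic Honda's convex-decomposition strategy, then refine the count at the combinatorial step to obtain the precise Catalan-number formula. The starting point is to choose a convex meridional disk $D = \{\mathrm{pt}\} \times D^2$ with Legendrian boundary, which exists by Giroux's theorem after a $C^\infty$-small isotopy. Tightness forces the dividing set $\Gamma_D \subset D$ to be an arc system (no closed components), and the endpoints of $\Gamma_D$ on $\partial D$ are prescribed by the boundary dividing slope $(n,-p,q)$. A standard neighborhood argument (Honda's reconstruction lemma) shows that the isotopy class of $\xi$ rel $\partial M$ is completely determined by the isotopy class rel endpoints of $\Gamma_D$ together with finitely many sign choices along the bypass layers. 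Hence the classification reduces to a combinatorial enumeration of admissible decorated arc systems.

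Next, I would set up the bypass/continued-fraction decomposition. Using the Farey tessellation in $\mathbb{H}^2$, the negative continued fraction $-p/q = [r_0, r_1, \ldots, r_k]$ indexes a minimal path from the meridional slope to the boundary slope, which in turn yields a factorization of $M$ into basic slices $T^2 \times [i, i+1]$. Each basic slice admits two tight structures distinguished by a relative half-Euler class (a $\pm$ sign). Within a single continued fraction block of length $|r_j|$, certain sign permutations yield isotopic (indeed, contactomorphic rel boundary) contact structures on that $T^2 \times I$ piece — this is Honda's ``shuffling'' phenomenon and is responsible for the appearance of the products $(r_0+1)(r_1+1)\cdots$ instead of $2^{|r_0|+\cdots+|r_k|}$. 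The definitions of $r$ and $s$ in the statement are exactly what one obtains by carefully recording, at the end of the recursion, how many shufflings are available when the last block does or does not absorb an extra basic slice; this accounts for the linear factor $(r-s)n+s$.

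The Catalan factor $C_n$ comes from allowing $n > 1$ parallel ruling curves on $\partial M$. With $2n$ endpoints on $\partial D$, the tight arc systems on $D$ are in bijection with non-crossing matchings of $2n$ points on a circle, which are enumerated by $C_n = \frac{1}{n+1}\binom{2n}{n}$. Each such matching carries an independent decoration by the basic-slice sign data described above, and a careful bookkeeping shows that the total decorations for a fixed matching are $(r-s)n + s$ (the $n$-dependence arises because the parallel rulings enable new shufflings whose count scales linearly with $n$). Multiplying yields $N(n,-p,q) = C_n((r-s)n+s)$.

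The main obstacle is showing this count is sharp, i.e.\ that distinct decorated matchings really give non-isotopic tight contact structures. The upper bound (completeness of the list) is essentially a systematic bypass attachment argument in the style of Honda, and while tedious it is not conceptually difficult. The lower bound is the genuinely hard part: one needs a contact-topological invariant fine enough to separate isotopy classes, and several tools are in principle available — the relative Euler class of the contact structure on the meridional disk, Giroux torsion detection, or the Ozsv\'ath--Szab\'o contact invariant in Heegaard Floer homology. I would attempt it first via relative Euler class plus state transitions across bypasses, reducing non-isotopy to the impossibility of a certain sequence of bypass moves connecting two decorated matchings, and invoke Heegaard Floer as a fallback when the elementary obstruction vanishes.
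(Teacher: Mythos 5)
First, a point of comparison: the paper does not prove this statement. It is quoted from Li--Zhang \cite{LZ} (building on Honda \cite{H1}) and used as a black box in the classification of $b$-contact structures on $(S^3,\mathbb{T}^2)$, so there is no internal proof to measure your proposal against. Your outline does follow the standard convex-surface-theory template of the cited literature --- convex meridional disk with Legendrian boundary, bypasses, the Farey/continued-fraction factorization into basic slices, and Honda's shuffling relations --- so the overall strategy is the right one.

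As written, however, the proposal contains a concrete error and leaves the two genuinely hard steps as acknowledged gaps, and those steps are the entire content of the theorem. The error: with boundary dividing set $(n,-p,q)$, each of the $2n$ dividing curves is homologous to $p\lambda+q\mu$, so the Legendrian boundary of the meridional disk $D=\{\mathrm{pt}\}\times D^2$ meets $\Gamma_{\partial M}$ in $2np$ points, not $2n$. The dividing set of $D$ is therefore an arc system on $2np$ endpoints, and the naive count of non-crossing matchings is $C_{np}$, which vastly overcounts; the factor $C_n$ in the formula does not arise as the number of matchings of $2n$ points on $\partial D$. Extracting it requires determining which of the $C_{np}$ configurations are realized by tight structures and which are related by isotopy, and the interaction between the nesting of the $2n$ boundary curves and the basic-slice sign data is exactly where the linear factor $(r-s)n+s$ comes from --- you assert this by ``careful bookkeeping,'' but that bookkeeping \emph{is} the theorem. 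Finally, the sharpness of the count (distinct decorated configurations give non-isotopic structures) is deferred to the relative Euler class ``with Heegaard Floer as a fallback''; since the relative Euler class is known to be insufficient to separate these structures once $n\ge 2$ (which is why Honda's original enumeration was only an upper bound in that range and why \cite{LZ} was needed), the fallback is not optional, and no argument is supplied for it. In short, this is a reasonable roadmap to the proof in \cite{LZ}, but not a proof.
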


\subsection{The case of $(S^3,S^2)$}
We will start by classifying $b$-contact structures in the prototypical example of a three-dimensional $b$-manifold: the three-sphere with critical set a two-sphere (say, the equator). We assume that the $b$-manifold $(S^3,S^2)$ is endowed with an orientation, hence there is a positive and a negative hemisphere. Let us give an example of a tight $b$-contact structure on $(S^3,S^2)$.

\begin{example}\label{ex:S3tight}
Let $(S^3, S^2)$ be the $b$-manifold described above. Choose a neighborhood $U=S^2\times (-\varepsilon, \varepsilon)$ of the critical set of the $b$-manifold, and denote by $z$ the coordinate in the second factor. Consider $udz+\beta$, where $u,\beta \in \Omega^*(S^2)$, be a $\mathbb{R}$-invariant contact form whose diving set along $S^2$ is just a circle (an equator). Such form exists, just consider a convex neighborhood of an embedded sphere {in a Darboux neighborhood}. By Giroux criteria, it will have a connected dividing set.

We can endow $U$ with the $b$-contact form $\alpha=u\frac{dz}{z}+ \beta$. Given some positive $\delta < \varepsilon$, the complement of $S^2\times (-\delta,\delta)$ is given by two disks $D_+,D_-$. In $D_+$, the orientation induced by the $b$-contact volume form defined by $\alpha$ near the boundary of $D_+$ induces the standard orientation of $S^3$. We can fill $D_+$ with the standard contact structure in the disk with convex boundary. In $D_-$, the $b$-contact volume form defined by $\alpha$ induces the opposite orientation. We can fill $D_-$ with the (negative) standard contact structure in the disk with convex boundary. This globally defines a tight $b$-contact structure in $(S^3,S^2)$. As we will see in this section, it is the unique (up to $b$-isotopy) positive tight $b$-contact structure in this $b$-manifold. As a $b$-Jacobi structure, the singular foliation has two three-dimensional leaves (the upper and lower hemisphere of $S^3$), two two-dimensional leaves (the two hemispheres on the critical set $S^2$), and a single one-dimensional leaf (the connected dividing set given by the equator on the critical set $S^2$).
\end{example}

As an intermediate step, we characterize the isotopy classes of {collections of curves} in $S^2$ that can arise as dividing sets of $b$-contact structures in $(S^3,S^2)$. Recall that given a set of separating closed curves in a surface $Z$, we denote by $Z_+,Z_-$ the (possibly disconnected) surfaces with boundary obtained by cutting open $Z$ along the curves.

\begin{lemma}\label{lem:tree}
Isotopy classes of closed curves that are the dividing set of some $b$-contact structure in $(S^3,S^2)$ satisfy $\chi(Z_+)=\chi(Z-)$ and are in one-to-one correspondence with two-equicolored trees with even number of vertices.
\end{lemma}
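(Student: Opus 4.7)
The plan is to prove the lemma in two stages: first the necessity of the Euler characteristic condition, then the identification with two-equicolored trees.

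\emph{Necessity of $\chi(S^2_+) = \chi(S^2_-)$.} By Lemma \ref{prop:euler}, for any $b$-contact structure $\xi$ with dividing set $\Gamma$ on $Z = S^2$, one has $\chi(S^2_+) - \chi(S^2_-) = \langle e_Z(\xi), Z\rangle$. Since $H^2(S^3,\mathbb{Z}) = 0$, the Euler class $e(\xi) \in H^2(S^3,\mathbb{Z})$ vanishes; by naturality $e_Z(\xi) = i^*e(\xi) = 0$, where $i : S^2 \hookrightarrow S^3$ is the inclusion, so the pairing vanishes.

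\emph{The dual tree.} To a cooriented set $\Gamma$ of disjoint separating curves in $S^2$ I associate its dual graph $T(\Gamma)$: vertices are the connected components of $S^2 \setminus \Gamma$ (colored by sign according to the coorientation), and each curve in $\Gamma$ is an edge joining the two components it separates. The coorientation forces adjacent vertices to carry opposite colors, so the coloring is proper. An easy induction on $|\Gamma|$, using that each separating curve sits inside a single component and splits it in two, shows that $S^2 \setminus \Gamma$ has exactly $|\Gamma| + 1$ connected components, so $T(\Gamma)$ satisfies $|V| = |E| + 1$ and, being connected, is a tree.

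\emph{Translating the Euler characteristic condition.} Each component of $S^2 \setminus \Gamma$ associated to a vertex $v$ is a planar surface with $\deg(v)$ boundary circles, hence of Euler characteristic $2 - \deg(v)$. Summing over each color class $V_\pm$ and using bipartiteness (every edge is incident to exactly one $+$-vertex and one $-$-vertex) gives $\chi(S^2_\pm) = 2|V_\pm| - |E|$, so $\chi(S^2_+) = \chi(S^2_-)$ is equivalent to $|V_+| = |V_-|$, which forces $|V|$ to be even.

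\emph{Bijection up to isotopy.} Surjectivity is obtained by induction on $|V|$: one realizes the unique tree on two vertices by a cooriented great circle splitting $S^2$ into two oppositely colored disks, and realizes each new leaf by inserting a small nested curve inside the appropriate colored region. Injectivity follows by iterating the Schönflies theorem: in any disjoint union of simple closed curves in $S^2$, an ``outermost'' curve bounds a disk on one side, and peeling off such curves inductively while matching the dual trees builds an ambient isotopy preserving the coorientation. The main (mild) obstacle is this combinatorial bookkeeping in the injectivity argument; the rest follows from the vanishing $e(\xi) = 0$ above and elementary counting on trees.
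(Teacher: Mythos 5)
Your necessity argument and the combinatorial dictionary are correct, and in fact more careful than the paper's: the identity $\chi(S^2_\pm)=2|V_\pm|-|E|$ (planar components with $\deg(v)$ boundary circles, bipartiteness) is exactly the computation needed to see that $\chi(S^2_+)=\chi(S^2_-)$ is equivalent to $|V_+|=|V_-|$, a point the paper only asserts. Your route to $e_Z(\xi)=0$ via $H^2(S^3;\mathbb{Z})=0$ and naturality is also a clean substitute for the paper's argument, which instead extends the plane field over each hemisphere and computes the degree of the associated Gauss map. The Sch\"onflies/outermost-curve argument for injectivity on isotopy classes is standard and fine.

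There is, however, one genuine gap in your surjectivity step. The lemma asserts a bijection between two-equicolored trees and isotopy classes of curves \emph{that actually arise as the dividing set of some $b$-contact structure on $(S^3,S^2)$}. Your induction only produces, for each two-equicolored tree, a configuration of cooriented curves in $S^2$ with that dual tree; it does not produce a $b$-contact structure on $(S^3,S^2)$ whose dividing set is that configuration. As written, you have shown that the tree assignment is injective on dividing sets and lands in the set of two-equicolored trees with evenly many vertices, but not that it is onto. The missing step is a realization statement: since any $b$-plane field on $(S^3,S^2)$ has $\langle e_Z(\eta),[S^2]\rangle=0$ (it extends over either hemisphere, or again because $H^2(S^3)=0$), Theorem \ref{thm:existencebcontacthprin} applies to any collection $\Gamma$ with $\chi(S^2_+)=\chi(S^2_-)$ and produces a (fully overtwisted) $b$-contact structure with dividing set exactly $\Gamma$. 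This is how the paper closes the argument, and your proof needs this citation (or an equivalent explicit construction of an $\mathbb{R}$-invariant $b$-contact germ near $S^2$ with prescribed dividing set, extended over the two balls) to obtain the claimed one-to-one correspondence.
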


\begin{proof}
Given a set of curves $\Gamma$ that arise as the dividing set of a $b$-contact structure in $(S^3,S^2)$, the associated tree is defined as follows: each connected component of $S^2\setminus \Gamma$ defines a vertex and two vertices are related by an edge if they share a connected component of $\Gamma$ as their boundary. The resulting graph is a tree as {every curve in the two-sphere disconnects}. Recall that to each connected component of $S^2\setminus \Gamma$, there is an associated sign, and thus the tree is two-colorable. We denote the set of points of $S^2 \setminus \Gamma$ where the attributed sign is positive/negative by $S^2_+$, and respectively $S^2_-$.
{Since $H^2(S^3, \mathbb{Z})=0$, the Euler class of $\xi$ is necessarily trivial both in $S^3$ and when restricted to $Z=S^2$, and thus $\chi(S^2_+)=\chi(S^2_-)$.} In the associated tree, this means that there is an equal number of positive and negative vertices and therefore the tree is a two-equicolored tree. 

Conversely, given any two-equicolored tree, construct a set of separating curves $\Gamma$ in $S^2$ inducing this tree by the previous construction. Let $\eta$ be any $b$-plane field in $(S^3,S^2)$, its Euler class along $S^2$ is necessarily vanishing since the $b$-plane field extends to the disk defined by any of the two hemispheres of $S^3$. By Theorem \ref{thm:existencebcontacthprin}, there exists some $b$-contact structure whose dividing set is $\Gamma$.
\end{proof}

\begin{remark}
For $n=1,2,3,4,5$, the number of two-equicolored trees with $2n$ vertices is given by $1, 1, 4, 14, 65$, see \cite{tree}.
\end{remark}

We proceed now with the classification of $b$-contact structures in $(S^3,S^2)$.

\begin{theorem} \label{thm:bclassificationS3S2}
The classification of positive $b$-contact structures in $(S^3,S^2)$, endowed with some orientation, up to $b$-isotopy is the following.
\begin{itemize}
\item  There exists only one tight $b$-contact structure on $(S^3,S^2)$. {O}n the critical surface there is a single one-dimensional leaf.
\item {$b$-contact structures that are overtwisted in one connected component of $S^3\setminus S^2$ but tight in the other one are determined by specifying which side of the equator is the tight component (i.e. a sign), and by the homotopy class of the $b$-plane field in the closure of the overtwisted component relative to $S^2$ (i.e. by an element in $\mathbb{Z}$: the Hopf invariant)}. On the critical surface, there is a single one-dimensional leaf.
\item  For each $\Gamma$ isotopy class of separating closed curves in $S^2$ satisfying $\chi(S^2_+)=\chi(S^2_-)$, fully overtwisted $b$-contact structures inducing $\Gamma$ as the set of one-dimensional determined by {the homotopy class of the $b$-plane field in the closure of each component of $S^3\setminus S^2$ relative to $S^2$ (i.e. by an element in $\mathbb{Z}^2$: the Hopf invariant of the $b$-plane field in each component)}.
\end{itemize}
\end{theorem}

Note that the dividing curves that can be realized in the case where both connected components in $S^3\setminus S^2$ are overtwisted can be combinatorically characterized, see Lemma \ref{lem:tree}. 

\begin{remark}
By the correspondence between $b$-Jacobi structures and $b$-contact forms (Proposition \ref{prop:jacobi}) and Remark \ref{rem:definingform}, Theorem \ref{thm:classificationball} can be reinterpreted as the classification of $b$-Jacobi structures.
\end{remark}

\begin{proof}
By Proposition \ref{prop:bconvex}, we can always assume that the $b$-contact structure is $\mathbb{R}$-invariant close to $Z=S^2$. Furthermore, any isotopy class of separating curves that can arise as the set of one-dimensional leaves of a $b$-contact structure satisfies $\chi(S^2_+)=\chi(S^2_-)$ by Lemma \ref{lem:tree}, and uniquely determines the $b$-contact structure near $Z$ by Proposition \ref{cor:dividingset}. We will proceed by treating first the case where the $b$-contact structure is tight, followed by the case where one of the connected components admits an overtwisted disk. Lastly, we consider the case where the structure is fully overtwisted.

Assume first that the $b$-contact structure is tight.  By the Giroux criteria for $b$-contact structures (Corollary \ref{cor:bgirouxcrit}), the dividing set $\Gamma\subset Z=S^2$ has one connected component. Let $U= Z\times (-\varepsilon,\varepsilon)$ be an $\mathbb{R}$-invariant neighborhood of $Z$. Take $\delta>0$ smaller than $\varepsilon$, and consider $Z_{\delta}:=Z\times (-\delta,\delta)\subset Z\times (-\varepsilon,\varepsilon)$. Each connected component of $M\setminus Z_{\delta}$ is diffeomorphic to a closed disk endowed with a contact structure whose boundary is, by Lemma \ref{lem:convexdelta}, a convex sphere with a connected dividing set. The orientation induced by the contact structure coincides with the orientation of $S^3$ in one of these components, and with the opposite orientation on the other side. By Eliashberg's theorem (Theorem \ref{thm:classificationball}), there exists only one tight contact structure in the disk inducing in the boundary the convex surface characterized by a connected dividing set. Hence there is only one tight $b$-contact {structure} in $(S^3,S^2)$ up to isotopy: Example \ref{ex:S3tight}.

Assume now that one of the connected components is tight and the other one if overtwisted. Then by the Giroux criteria, the neighborhood around the critical set is tight. As before, the connected component where the $b$-contact structure is tight is uniquely determined by Eliashberg's theorem (Theorem \ref{thm:classificationball}). By the classification of overtwisted contact structures \cite{eli2}, every homotopy class of plane fields in the other connected component, fixed near the boundary, can be homotoped to a (unique) overtwisted contact structure. On a disk, the homotopy type of a plane field (relative to the boundary) is determined by an element in $\mathbb{Z}$, the Hopf invariant (see e.g. \cite[p. 142]{Ge}). Thus, there are $\mathbb{Z}$ overtwisted contact structures in that connected component. There is a choice of which connected component is the tight one, so there exist $\{+,-\}\times \mathbb{Z}$ different $b$-contact structures that are tight in one connected component and overtwisted in the other one. 

In the last case, when the $b$-contact structure is fully overtwisted, the dividing set determines the $b$-contact structure in a neighborhood around the critical set up to $b$-isotopy (Proposition \ref{cor:dividingset}). This dividing set is determined by a two-equicolored tree by Lemma \ref{lem:tree}. Each hemisphere can be filled by as many pair-wise distinct overtwisted contact structures as homotopy classes of plane fields (relative to the boundary) by Theorem \ref{thm:existencebcontacthprin}. Hence, there are $\mathbb{Z}\oplus \mathbb{Z}$ different overtwisted $b$-contact structures inducing that given dividing set, which concludes the proof of the theorem.
\end{proof}

\subsection{The case of $(S^3,\mathbb{T}^2)$}

We look at $S^3$ as the manifold obtained by gluing two solid tori along their boundary, by a diffeomorphism that glues the longitude of one boundary torus to the meridian of the other boundary torus. Let $\mathbb{T}^2$ denote the gluing locus, and we consider the $b$-manifold $(S^3, \mathbb{T}^2)$ endowed with an orientation (i.e. a sign on each solid torus). We can think as well of the critical surface as the boundary of a tubular neighborhood of an unknot on the three-sphere.

Given any $b$-contact structure on $(S^3,\mathbb{T}^2)$, the Euler class of $\xi$ is always zero, since $H^2(S^3)=0$. So when evaluated in $\mathbb{T}^2$, it is also zero. This shows that any dividing set of $b$-contact structure in $(S^3,\mathbb{T}^2)$ is a family of embedded close curves such that $\chi(\mathbb{T}^2_+)=\chi(\mathbb{T}^2_-)$. Arguing as in Lemma \ref{lem:tree}, these families of curves can also be characterized in terms of graphs (and a pair of coprime integers for some of the graphs). Indeed, any set of curves satisfying $\chi(T^2_+)=\chi(T^2_-)$ can be identified with a graph that might have a cycle if there are curves with non-trivial homology in the torus. When there is a cycle, the set of separating closed curves is determined by specifying the homology class of these curves. 

\begin{lemma}\label{lem:T2div}
Isotopy classes of separating curves that are the dividing set of some $b$-contact structure in $(S^3,\mathbb{T}^2)$ satisfy $\chi(\mathbb{T}^2_+)=\chi(\mathbb{T}^2_-)$ and are in one-to-one correspondence with pairs $(G, (p,q))$, where $G$ is a two-equicolored graph with an even number of vertices and at most one cycle, $(p,q)=(0,0)$ if $G$ has no cycle and $(p,q)$ are two coprime integers such that $0<q\leq p$ if $G$ has a cycle.
\end{lemma}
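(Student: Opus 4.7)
The plan is to reduce the statement to three pieces: the Euler-class obstruction for $b$-plane fields on $(S^3,\mathbb{T}^2)$, the classification of disjoint multicurves on the torus up to isotopy, and the realization step via Theorem \ref{thm:existencebcontacthprin}.

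For the necessary condition $\chi(\mathbb{T}^2_+)=\chi(\mathbb{T}^2_-)$, I use that $H^2(S^3,\mathbb{Z})=0$, so every $b$-plane field on $(S^3,\mathbb{T}^2)$ has Euler class pairing trivially with $[\mathbb{T}^2]$. Lemma \ref{prop:euler} immediately yields $\chi(\mathbb{T}^2_+)=\chi(\mathbb{T}^2_-)$ for the dividing set of any $b$-contact structure on $(S^3,\mathbb{T}^2)$. Conversely, $b$-plane fields exist by Proposition \ref{prop:TM=bTM}, and by Theorem \ref{thm:existencebcontacthprin} any cooriented separating multicurve $\Gamma\subset\mathbb{T}^2$ satisfying this Euler equality arises as the dividing set of some $b$-contact structure. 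The lemma therefore reduces to counting isotopy classes of such $\Gamma$ in terms of the combinatorial data $(G,(p,q))$.

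To encode $\Gamma$ combinatorially, I use that any disjoint multicurve on $\mathbb{T}^2$ splits as a disjoint union of null-homotopic components (each bounding a disk) and essential components, and that disjoint essential simple closed curves on $\mathbb{T}^2$ are pairwise isotopic. Hence all essential curves share a common primitive homology class; fixing the basis of $H_1(\mathbb{T}^2)$ induced by the longitude and meridian of the genus-one Heegaard splitting of $S^3$, this class is represented by a pair $(p,q)$ with $\gcd(p,q)=1$ that I normalize to $0<q\leq p$, setting $(p,q)=(0,0)$ if $\Gamma$ contains no essential curve. Separation forces the number of essential curves to be even, say $2k$, and these cut $\mathbb{T}^2$ into $2k$ annuli that form a cycle of length $2k$ in the dual adjacency graph; the null-homotopic curves then nest within these annuli (or, in the $(0,0)$ case, within the torus itself) and contribute tree attachments. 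The graph $G$ is taken to be this coloured adjacency graph, which by construction has at most one cycle.

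The last step is to check that $\chi(\mathbb{T}^2_+)=\chi(\mathbb{T}^2_-)$ corresponds precisely to the claimed combinatorial condition on $(G,(p,q))$. Since signs alternate across each curve, the sum $\sum_{v\in V_\pm}d_v$ (where $d_v$ is the number of boundary components of the region $R_v$) is the same on both sides, and for a planar region $\chi(R_v)=2-d_v$; combining these with the contribution from the unique genus-one region that arises when $(p,q)=(0,0)$, a direct computation reduces $\chi(\mathbb{T}^2_+)-\chi(\mathbb{T}^2_-)$ to a multiple of $f_+-f_-$, so the Euler equality becomes the two-equicoloring condition with an even total number of vertices. Sufficiency is then direct: given any combinatorial datum satisfying the conditions, one constructs a concrete $\Gamma$ on $\mathbb{T}^2$ by placing $2k$ parallel essential curves of slope $(p,q)$ (none when $(p,q)=(0,0)$) and nesting null-homotopic curves in the resulting pieces in accordance with the tree attachments and the prescribed coloring. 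I expect the main obstacle to be the Euler-characteristic bookkeeping in the $(0,0)$ case, where the unique genus-one component contributes an extra term relative to the purely planar formula used in Lemma \ref{lem:tree}; once this is correctly absorbed into the combinatorial count, the stated correspondence follows as above.
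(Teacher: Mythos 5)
Your overall strategy coincides with the paper's: the paper likewise derives $\chi(\mathbb{T}^2_+)=\chi(\mathbb{T}^2_-)$ from $H^2(S^3)=0$ together with Lemma \ref{prop:euler}, invokes Theorem \ref{thm:existencebcontacthprin} for realization, and then only says that one argues ``as in Lemma \ref{lem:tree}'', with the cycle in the adjacency graph accounting for the essential curves and $(p,q)$ recording their common slope. Your decomposition of $\Gamma$ into essential and null-homotopic components, and your Euler-characteristic computation in the cyclic case (all regions planar, $\sum_{v\in V_+}d_v=\sum_{v\in V_-}d_v$, hence $\chi_+-\chi_-=2(f_+-f_-)$), are correct and are exactly what the paper intends.

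The gap is in the $(p,q)=(0,0)$ case, and it is not the harmless bookkeeping you suggest. When every component of $\Gamma$ is contractible, exactly one complementary region $R_0$ has genus one, so $\chi(R_0)=-d_0=(2-d_0)-2$ and the correct identity is $\chi(\mathbb{T}^2_+)-\chi(\mathbb{T}^2_-)=2(f_+-f_-)\mp 2$, the sign depending on which side $R_0$ lies. This is not ``a multiple of $f_+-f_-$'': the Euler equality forces $f_+-f_-=\pm1$, i.e.\ an odd number of regions and a tree that is \emph{not} two-equicolored, so it cannot be ``absorbed'' into the equicolored/even-vertex format. Concretely, two nested contractible circles give three regions colored $+,-,+$ with $\chi_+=\chi_-=0$ (hence a realizable dividing set by Theorem \ref{thm:existencebcontacthprin}) whose tree is a three-vertex path, while a single contractible circle gives a two-equicolored two-vertex tree yet has $\chi_+=1\neq-1=\chi_-$. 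Your closing claim therefore asserts a correspondence that your own computation, carried out correctly, contradicts; a further point you would also need in the acyclic case is that the colored tree alone does not determine the isotopy class, since one must additionally record which vertex carries the genus-one region. You have in fact isolated precisely the step at which the naive transcription of Lemma \ref{lem:tree} breaks; the acyclic branch of the correspondence has to be restated (trees with $|f_+-f_-|=1$ and a marked vertex of the majority color) rather than forced into the equicolored, even-vertex form.
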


We can now state the classification of $b$-contact structures in $(S^3,\mathbb{T}^2)$.

\begin{theorem}\label{thm:classS3T2}
The classification of positive $b$-contact structures in $(S^3,\mathbb{T}^2)$, endowed with some orientation, up to $b$-isotopy is the following, depending on the dividing set along the critical surface.
\begin{itemize}
\item  For each isotopy class of separating curves $\Gamma$ in $\mathbb{T}^2$ such that $\chi(\mathbb{T}^2_+)=\chi(\mathbb{T}^2_-)$, fully overtwisted $b$-contact structures are {determined by the homotopy class of the $b$-plane on the closure of each overtwisted component relative to $T^2$ (i.e. by an element in $\mathbb{Z}^2\oplus \mathbb{Z}^2$: half the Euler class and the Hopf invariant of the $b$-plane field in each component).}
\item For each isotopy class as above without contractible closed curves, there are $2\cdot N(n,-p,q)$ tight $b$-contact structures, where $(n,-p,q)$ parametrizes the dividing set.
\item For each isotopy class as above without contractible closed curves, $b$-contact structures that are tight in one component and overtwisted in the other are determined by {specifying which component of $S^3\setminus T^2$ is tight (i.e. by a sign)}, a number in $N(n,-p,q)$ {that specifies the dividing set}, and {by the homotopy class of the $b$-plane field on the closure of the overtwisted component relative to $T^2$ (i.e. an element in $\mathbb{Z}^2$: half the Euler class and the Hopf invariant of the $b$-plane field)}.
\end{itemize}
\end{theorem}
\begin{proof}

As before, by Proposition \ref{prop:bconvex}, we can always assume that any $b$-contact structure is $\mathbb{R}$-invariant close to $Z=\mathbb{T}^2$. Let $\Gamma$ be any isotopy class of separating curves in $\mathbb{T}^2$ that can be realized by a $b$-contact structure on $(S^3,\mathbb{T}^2)$, we know it satisfies $\chi(\mathbb{T}^2_+)=\chi(\mathbb{T}^2_-)$ by Lemma \ref{lem:T2div}. It uniquely determines the $b$-contact structure near $Z$ by Proposition \ref{cor:dividingset}. Let $U= Z\times (-\varepsilon,\varepsilon)$ be an $\mathbb{R}$-invariant neighborhood of $Z$. Take $\delta>0$ smaller than $\varepsilon$, and consider $Z_{\delta}:=Z\times (-\delta,\delta)\subset Z\times (-\varepsilon,\varepsilon)$. By Eliashberg's classification of overtwisted contact structures \cite{eli2}, there is a unique overtwisted contact structure in each relative homotopy class of plane fields in each component of $M\setminus Z_{\delta}$ whose boundary is a convex surface whose dividing set is $\Gamma$. Each component of $M\setminus Z_{\delta}$ is diffeomorphic to a solid torus with convex boundary and dividing set $\Gamma$ by Lemma \ref{lem:convexdelta}. The relative homotopy class of a plane field in $D^2\times S^1$ is determined by two integers, one is the Euler class and the other one is the Hopf invariant (see \cite[Section 4.2]{Ge}). These capture the obstructions for two plane fields to be homotopic relative to the boundary, and are classes in $H^2(S^1\times D^2, \mathbb{T}^2; \pi_2(S^2))\cong \mathbb{Z}$ and in $ H^3(S^1\times D^2, \mathbb{T}^2; \pi_3(S^2)) \cong \mathbb{Z}$. {For any even element in $H^2(S^1\times D^2, \mathbb{T}^2;\pi_2(S^2))$ and any element in $ H^3(S^1\times D^2, \mathbb{T}^2; \pi_3(S^2)) \cong \mathbb{Z}$, there is a $b$-plane field with those invariants (this is analogous to the arguments for plane fields in \cite[Section 4.2]{Ge} due to the isomorphism between the $b$-tangent bundle and the tangent bundle in our context).} Hence, fully overtwisted $b$-contact structures for each possible dividing set on $\mathbb{T}^2$ are {parametrized by two integers, the Euler class divided by two and the Hopf invariant}.

If the $b$-contact structure has some tight component, then by Giroux criteria (Corollary \ref{cor:bgirouxcrit}) the dividing set in the boundary of any component of $M\setminus Z_\delta$ has no contractible closed curves. Hence, this holds for the set of one-dimensional leaves of the $b$-contact structure along $Z$. Each component is a solid torus, so it can be filled with $N(n,-p,q)$ tight contact structures or $\mathbb{Z}^2$ overtwisted contact structures. Hence there are $2 \cdot N(n,-p,q)$ tight $b$-contact structures. A $b$-contact structure that is overtwisted in one component and tight in the other is determined by choosing which component is tight, a tight structure filling $D^2\times S^1$ and a homotopy class of plane fields on the solid torus (which determines a unique overtwisted contact structure). This completes the proof of the theorem.
\end{proof}

The strategy as employed in the proofs of Theorem \ref{thm:bclassificationS3S2} and \ref{thm:classS3T2} works in other examples as well. As already mentioned, we use the classification of the homotopy class of plane fields on each connected component of $M\setminus Z$ and the classification of tight contact structures on the connected components of $M\setminus Z$ (seen as a manifold with boundary), with a convex boundary $Z$. For example, one can classify following the same scheme of proof $b$-contact structures in $S^3$ with $Z$ a finite number of parallel copies of $S^2$.

\section{Towards $b$-contact topology in higher dimensions}\label{sec:Reeb}

In this section, we give an existence theorem for $b$-contact structures in higher dimensions and generalize results in \cite{MO2} on Reeb dynamics.

\subsection{Existence of fully overtwisted $b$-contact structures in all dimensions}

In Section \ref{sec:fot}, we gave an existence $h$-principle for $b$-contact structures in three dimensions, by further prescribing the singular foliation induced by the $b$-Jacobi structure. We will make use of the recent advances in higher dimensional convex hypersurface theory \cite{HH,EP} to give a weaker existence theorem in all dimensions. Note that in this case, we cannot prescribe the singular foliation as we did in the three-dimensional case.

\begin{theorem}\label{thm:HDcontactexist}
Let $(M,Z)$ be a closed oriented $b$-manifold of dimension $2n+1\geq 3$ such that $Z$ is separating. Then any formal $b$-contact structure is homotopic to a fully overtwisted $b$-contact structure.
\end{theorem}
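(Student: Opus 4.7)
The strategy is to adapt the proof of Theorem \ref{thm:existencebcontacthprin} by replacing its three-dimensional ingredients with their higher-dimensional counterparts, following the usual two-step scheme: a semi-local step near $Z$ and a global $h$-principle step on the complement.

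For the semi-local step, the goal is to produce an $\mathbb{R}$-invariant $b$-contact form on a tubular neighborhood $U \cong Z \times (-\varepsilon, \varepsilon)$ of $Z$ whose underlying formal $b$-contact structure is homotopic, through formal $b$-contact structures, to the restriction of the given $(\alpha, \omega)$. By Lemma \ref{lem:convexbcontact=convexcontact}, such an $\mathbb{R}$-invariant $b$-contact form corresponds to a genuine $\mathbb{R}$-invariant (smooth) contact form on $U$. The formal $b$-contact data restricted to $U$ can be identified, up to formal homotopy, with a formal contact structure on $U$ that is formally $\mathbb{R}$-invariant, since $U$ retracts to $Z$. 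Finding a genuine $\mathbb{R}$-invariant contact representative in this formal class is exactly the semi-local realization problem for convex hypersurfaces, which has been settled in high dimensions by Honda--Huang \cite{HH} (see also Eliashberg--Pancholi \cite{EP}).

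For the global step, fix a thinner neighborhood $U' = Z \times (-\delta, \delta) \subset U$ and consider each connected component $N$ of $M \setminus U'$. The formal $b$-contact data restricts on $N$ to a formal contact structure (as $b$-forms coincide with smooth forms away from $Z$) that already agrees near $\partial N$ with the genuine $\mathbb{R}$-invariant contact form produced in the previous step. Insert a small formal overtwisted ball in the interior of each such $N$, which is a standard local modification not changing the formal homotopy class of the contact structure. Then Borman--Eliashberg--Murphy's relative parametric $h$-principle for overtwisted contact structures \cite{bem} deforms, relative to the boundary, the formal contact structure on each $N$ into a genuine overtwisted one. Gluing the $\mathbb{R}$-invariant $b$-contact form on $U'$ with these overtwisted contact structures yields a globally defined, fully overtwisted $b$-contact structure on $(M, Z)$ homotopic through formal $b$-contact structures to the initial $(\alpha, \omega)$.

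The main obstacle is the semi-local step: unlike in dimension three, the high-dimensional convex hypersurface theorems do not provide the precise combinatorial control on the dividing set that was exploited via Lemma \ref{lem:bplane=plane}. This is why the statement of Theorem \ref{thm:HDcontactexist} does not prescribe the induced singular foliation on $Z$, in contrast to the three-dimensional Theorem \ref{thm:existencebcontacthprin}.
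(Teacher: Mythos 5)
Your overall strategy is the paper's: a semi-local step producing an $\mathbb{R}$-invariant $b$-contact form near $Z$, followed by the relative $h$-principle of Borman--Eliashberg--Murphy on the components of the complement. But the semi-local step as you describe it has a real gap: Honda--Huang \cite{HH} and Eliashberg--Pancholi \cite{EP} do not solve a formal-to-genuine realization problem. Their theorems take a hypersurface sitting inside a \emph{genuine} contact manifold and isotope it ($C^0$-small) to a convex one; they say nothing about producing a genuine contact structure from a formal one. To get into a position to apply them you first need Gromov's $h$-principle for contact structures on the \emph{open} manifold $U\cong Z\times(-\varepsilon,\varepsilon)$, which upgrades the smooth-form avatar $(\tilde\alpha_0,\tilde\omega_0)$ of the restricted formal $b$-contact structure to a genuine contact structure on $U$; only then can $Z$ be isotoped to a convex hypersurface and an $\mathbb{R}$-invariant model extracted from a collar of its image. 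This is exactly the missing link that the paper supplies, and without it the phrase ``the semi-local realization problem \dots has been settled by Honda--Huang'' does not carry the argument.

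Two further points need care. When translating between $dz$ and $\frac{dz}{z}$ you must check that the \emph{entire} homotopy of formal contact structures corresponds to a homotopy of formal $b$-contact structures (this works because the decomposed forms carry no $dz$-term, so the two nondegeneracy conditions literally agree), and that at the $\mathbb{R}$-invariant endpoint the $2$-form component equals the $b$-exterior derivative of the $1$-form component, so that the pair is a genuine $b$-contact form and not merely a formal one; the $\mathbb{R}$-invariance is what makes $du\wedge\frac{dz}{z}+d\beta=d\alpha$ hold. Lemma \ref{lem:convexbcontact=convexcontact} covers the endpoint but not the intermediate formal family. Also, the original formal data does not ``already agree near $\partial N$'' with the new germ: one must first use the homotopy extension property to globalize the semi-local homotopy of formal $b$-contact structures before applying \cite[Theorem 1.1]{bem} relative to the region where the structure has become genuine. (Your insertion of a formal overtwisted ball is harmless but unnecessary, since the existence statement of \cite{bem} already yields overtwistedness on each component of the complement of the fixed set.)
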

\begin{proof}
The strategy of the proof is similar to that of Theorem \ref{thm:existencebcontacthprin}, but a complete homotopical understanding of convex hypersurface theory is not available as in the three-dimensional case. Let $(\alpha_0,\omega_0)$ be an formal $b$-contact structure. We write $\alpha_0$ and $\omega_0$ on a neighborhood $U=Z\times (-\varepsilon,\varepsilon)$ of $Z$ as
$$ \alpha_0=u_0\frac{dz}{z}+ \beta_0, \qquad \omega_0=\gamma_0\wedge \frac{dz}{z} + \mu_0,$$
where $u_0\in C^\infty(U)$, $\beta_0, \gamma_0 \in \Omega^1(U)$ and $\mu_0\in \Omega^2(U)$ are such that  $\iota_{\partial_z}\beta_0 =\iota_{\partial_z}\gamma_0=\iota_{\partial_z} \mu_0=0$. We identify sections $\bigwedge^k {}^bT^*M$ with sections of $\bigwedge^k T^*M$ in $U$ via the identification $\frac{dz}{z}$ with  $dz$ as at the beginning of the proof of Theorem \ref{thm:existencebcontacthprin}. Let $(\tilde \alpha_0,\tilde \omega_0)$ be the formal contact structure obtained via this identification, i.e. 
$$\tilde \alpha_0= u_0 dz + \beta_0, \qquad \tilde \omega_0= \gamma_0 \wedge dz +\mu_0.$$ By Gromov's $h$-principle \cite{G}, there exists a family of formal contact structures $(\tilde \alpha_t, \tilde \omega_t)$ with $t\in [0,1]$ such that for $t=1$ it defines a contact structure in $U$. Now using either \cite{HH} or \cite{EP}, the hypersurface $Z\subset U$ is isotopic to a $C^0$-close convex hypersurface $\tilde Z$. There is a family of embeddings of 
$$\varphi_t:Z\times (-\delta,\delta) \longrightarrow U,$$
 with $\delta<\varepsilon$, such that $\varphi_0$ is just the trivial inclusion of $Z\times (-\delta,\delta)$ in $U$ and such that $(\tilde \alpha_2,\tilde \omega_2)= \varphi_1^*(\tilde \alpha_1,\tilde \omega_1)$ is an $\mathbb{R}$-invariant contact structure in $Z\times (-\delta,\delta)\subset U$. Hence we obtained in $Z\times (-\delta,\delta)$ a family of formal contact structures $(\tilde \alpha_t, \tilde \omega_t)$ with $t\in [0,2]$ such that for $t=2$ we have an $\mathbb{R}$-invariant contact form. We can write this pair of forms as 
 $$\tilde \alpha_t= u_tdz + \beta_t, \quad \tilde \omega_t= \gamma_t\wedge dz + \mu_t,$$
 such that $\beta_t, \gamma_t, \mu_t$ have no $dz$ term. Furthermore, by the $\mathbb{R}$-invariance near $t=2$, we can assume that $\beta_2, \gamma_2, \mu_2 \in \Omega^*(Z)$. We can associate a family of pairs of $b$-forms $(\alpha_t,\omega_t)$ with $t\in [0,2]$ by the identification $dz \mapsto \frac{dz}{z}$. That is
$$\alpha_t=u_t\frac{dz}{z} + \beta_t, \qquad \omega_t= \gamma_t\wedge \frac{dz}{z} + \mu_t. $$
First, we claim this family is a family of formal $b$-contact structures. To see this, observe that requiring that $(\alpha_t,\omega_t)$ if a formal $b$-contact structure, i.e. that $\alpha_t \wedge (\omega_t)^n >0$,
 can be written in terms of the decomposition as
$$  \alpha_t \wedge (\omega_t)^n= \frac{dz}{z} \wedge \Big(  u_t (\mu_t)^n + n\beta_t \wedge \gamma_t \wedge (\mu_t)^{n-1} \Big)>0.$$
On the other hand, requiring that $(\tilde \alpha_t, \tilde \omega_t)$ is a formal contact structure writes 
$$  \tilde \alpha_t \wedge (\tilde \omega_t)^n= dz \wedge \Big(  u_t (\mu_t)^n + n\beta_t \wedge \gamma_t \wedge (\mu_t)^{n-1} \Big)>0.$$
Since by construction $\mu, \beta_t, \gamma_t$ have no $dz$-term, these conditions are equivalent and simply require that $u_t (\mu_t)^n + n\beta_t \wedge \gamma_t \wedge (\mu_t)^{n-1}$ is non-degenerate along $\ker dz$.

What is not clear a priori is whether $(\alpha_2, \omega_2)$ is a genuine $b$-contact form, even if $(\tilde \alpha_2, \tilde \omega_2)$ is a genuine contact form. This will be the case exactly because we chose the splitting to be $\mathbb{R}$-invariant for $t=2$. Indeed, recall that for $t=2$ the forms $\beta_2, \gamma_2, \mu_2$ lie in $\Omega^*(Z)$. Since $(\tilde \alpha_2, \tilde \omega_2)$ is a genuine contact form, we have $\tilde \omega_2=d\alpha_2$ and thus:
$$ \tilde \alpha_2= u_2 dz + \beta_2, \qquad \tilde \omega_2= du_2 \wedge dz + d\beta_2, $$
and hence $\gamma_2=du_2$ and $\mu_2=d\beta_2$. By the $\mathbb{R}$-invariance of the forms $u_2, \beta_2$, this immediately implies that 
$$ \omega_2=  du_2 \wedge \frac{dz}{z} + d\beta_2= d\alpha_2,$$
i.e. that the $b$-form $\omega_2$ exactly corresponds to the $b$-exterior derivative of $\alpha_2$. We have thus shown that $(\alpha_t,\omega_t)$ is a homotopy of formal $b$-contact structures that is genuine for $t=2$. This family does not extend globally immediately, but we just need to use the homotopy extension property for formal solutions \cite[13.2.1]{hprinc} to find a global homotopy of formal $b$-contact structures $(\hat \alpha_t,\hat \omega_t), t\in [0,2]$ on $M$ which is constant and equal to $(\alpha_0,\omega_0)$ in $M \setminus U'$ for some smaller neighborhood $U'\subset U$ of $Z$, and equal to $(\alpha_t,\omega_t)$ in an even smaller neighborhood $U''\subset U'$ of $Z$. The pair $(\hat \alpha_2, \hat \omega_2)$ is thus genuine in $U''$ and hence restricted to a connected component $N$ of $M\setminus Z$ it defines a formal contact structure that is genuine near $V=U''\cap N$. We can apply the existence $h$-principle for overtwisted contact structures on $N$ relative $U''\cap N$ using \cite[Theorem 1.1]{bem}. Applying it to each connected component of $M\setminus Z$, we obtain a global $b$-contact structure $(\alpha', \omega')$ that is formally homotopic to $(\alpha_0,\omega_0)$.
\end{proof}
It follows from our discussion of the existence of formal $b$-contact structures that $(M,Z)$ admits a $b$-contact structure if and only if $M$ is stable almost complex.

\begin{corollary}
Let $M$ be a stable almost complex odd-dimensional closed manifold. For any separating hypersurface $Z$, the $b$-manifold $(M,Z)$ admits a fully overtwisted $b$-contact manifold.
\end{corollary}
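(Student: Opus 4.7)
The plan is to chain the two existence results established in this section, obtaining the corollary as an immediate consequence. Starting from the hypothesis that $M$ is stable almost complex and $Z \subset M$ is a separating hypersurface, Proposition \ref{prop:formalb} produces a formal $b$-Jacobi structure on $(M,Z)$. By the duality lemma between formal $b$-Jacobi and formal $b$-contact structures stated right after Definition \ref{def:formalJacobi}, this formal $b$-Jacobi structure is equivalent to a formal $b$-contact structure $(\alpha_0, \omega_0)$ on $(M,Z)$.

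Before invoking Theorem \ref{thm:HDcontactexist}, I would verify its orientability hypothesis. Since $M$ is stable almost complex, there exists $k \geq 0$ such that $TM \oplus \mathbb{R}^k$ carries a complex structure, which in particular orients $TM \oplus \mathbb{R}^k$; combined with the canonical orientation of $\mathbb{R}^k$ this orients $TM$. Together with the assumption that $Z$ is separating, this endows $(M,Z)$ with the structure of a closed oriented $b$-manifold (see Section \ref{ss:bgeo}) of dimension $2n+1 \geq 3$, so that the hypotheses of Theorem \ref{thm:HDcontactexist} are satisfied.

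With the formal $b$-contact structure $(\alpha_0,\omega_0)$ and the orientability in place, Theorem \ref{thm:HDcontactexist} applies directly and yields a homotopy through formal $b$-contact structures from $(\alpha_0,\omega_0)$ to a genuine fully overtwisted $b$-contact structure on $(M,Z)$. This proves the corollary. No genuine obstacle arises in this argument, as it is essentially a concatenation of two already established results; the only minor subtlety is noting that the stable almost complex assumption supplies the orientability required to invoke Theorem \ref{thm:HDcontactexist}.
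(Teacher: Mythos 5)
Your proof is correct and matches the paper's (implicit) argument exactly: the corollary is obtained by chaining Proposition \ref{prop:formalb} (stable almost complex $\Rightarrow$ formal $b$-Jacobi, hence formal $b$-contact, structure) with Theorem \ref{thm:HDcontactexist}. Your added verification that stable almost complexity supplies the orientation needed to invoke the theorem is a sensible and accurate detail that the paper leaves tacit.
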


\subsection{Periodic orbits on the critical set}

Using the observation in Proposition \ref{prop:bconvex}, we can show that there exist always contact leaves on the critical set of a $b$-contact manifold. Furthermore, the $b$-Reeb field defined by any $b$-contact form is always tangent to such submanifolds, where it is parallel to the Reeb field defined by a contact form.

\begin{theorem}\label{prop:codim2contactleaf}
Let $\xi$ be a $b$-contact structure on $(M^{2n+1},Z)$. 
Then the following holds:
\begin{itemize}
    \item There exists a one-parametric smooth family {$\{W_\delta\}_{\delta\in (-\tau,\tau)}$} of codimension two submanifolds contained in $Z$ where $\xi$ induces a (regular) contact structure;
    \item for any $\alpha$ defining $\xi$ the $b$-Reeb field of $\alpha$ is tangent to each $W_{\delta}$, where it coincides with a reparametrization of the Reeb field defined by a (regular) contact form of $W_{\delta}$.
\end{itemize}
\end{theorem}

\begin{proof}
Even if the first point only depends on $\xi$ and not on a fixed $b$-contact form, we will fix an arbitrary $\alpha$ from the beginning, as this has to be done anyway to prove the second point in the statement.
Write $\alpha$ on a neighborhood $U=Z\times (-\varepsilon,\varepsilon)$ of $Z$ as $\alpha=u\frac{dz}{z}+\beta$ with $u\in C^\infty(U)$ and $\beta \in \Omega^1(U)$. Even if $\alpha$ is not convex, the $b$-contact condition still implies that on $Z$
$$u(d\beta)^n+n\beta\wedge du \wedge (d\beta)^{n-1} \neq 0,$$
as can be seen from Equation (\ref{eq:bcontact}).
 On the set $W=\{x\in Z \enspace | \enspace u(x)=0\}$, this implies that $du\neq 0$ and thus $W$ is a regular hypersurface in $Z$. Furthermore, this also implies that $i_W^* \alpha= \beta$ is a contact form, where $i_W$ denotes the inclusion of $W$ in $M$. The set $W$ is non-empty, since otherwise {$\left(d\left(\frac{\beta}{u}\right)\right)^n$}
 %$\mu=u(d\beta)^n+n\beta\wedge du \wedge (d\beta)^{n-1}$ 
 restricted to $Z$ defines an exact volume form. Finally, the level sets $W_{\delta}=\{x\in Z \enspace|\enspace u(x)=\delta\}$ are regular as well for every {$\delta\in (-\tau,\tau)$ with $\tau>0$ small enough,} and $i_{W_{\delta}}\alpha$ defines a contact form as well since being contact is an open condition. 
 
 It remains to show that the $b$-Reeb field defined by $\alpha$ is tangent to $W_{\delta}$ and coincides with the Reeb field defined by the contact form $\beta_{\delta}=i_{W_\delta}^*\alpha$. To see this, write the Reeb vector field of $\alpha$ in $U$ as $R_\alpha=g \cdot z\pp{}{z} + X$, where $g\in C^\infty(U)$ and $X$ is tangent to the hypersurfaces $Z\times \{z\}$ with $z\in (-\varepsilon, \varepsilon)$. Denote by $i:Z\longrightarrow U$ the inclusion of the critical set in $U$. The equations $\iota_{R_\alpha} \alpha=1$ and $\iota_{R_\alpha}d\alpha=0$ evaluated in $Z$ yield
 {$$\begin{cases} 
  g \cdot u +\beta(X)=1 \\
 gd_Zu+\iota_Xd_Z\beta + \iota_Xdu\wedge \frac{dz}{z} + z \cdot \iota_{R_\alpha}(\frac{dz}{z}\wedge \dot{\beta})=0,
 \end{cases}
 $$}
 At $z=0$, we deduce that $\iota_Xdu=0$ and that $\iota_Xd_Z\beta=-gd_Zu$. The first equation implies that $i^*u$ is a first integral of $R_\alpha|_Z$, hence $R_\alpha$ is tangent to each $W_{\delta}$. Finally, the second equation pulled back to $W_{\delta}$ yields $\iota_Xi_{W_\delta}^*d\beta=0$. Hence $R_\alpha|_Z$ lies in the kernel of $\beta_\delta=i_{W_\delta}^*\beta$ and so is parallel to the Reeb field of $\beta$. Indeed, $\beta_\delta(R_\alpha)=\beta_\delta(X)=1-g \cdot u$, which is close to $1$ when $u$ is close to zero.
\end{proof}

\begin{remark}
The contact {submanifold $\{u=0\}$} contained on the critical set of a $b$-contact manifold obtained in {Theorem} \ref{prop:codim2contactleaf} corresponds to an odd-dimensional leaf of the singular foliation of the Jacobi structure associated with the $b$-contact structure. {Each component of $\{u=0\}$, as well as the one-parametric family of contact submanifolds, is a tight contact manifold, see \cite[Lemma 6.5]{dPW}}. The complement in $Z$, that is $Z \setminus \Gamma$ are even-dimensional leaves of the Jacobi structure, with an induced structure that is locally conformally symplectic.
\end{remark}

It follows that as long as the Weinstein conjecture holds in dimension $\dim M-2$, any $b$-Reeb field has infinitely many periodic orbits on $Z$. Since the Weinstein conjecture is known in dimension three \cite{Tau}, we deduce the following results, which generalizes \cite[Proposition 6.1]{MO2} to dimension five.

\begin{corollary}
Let $(M,Z)$ be a five-dimensional $b$-manifold endowed with a $b$-contact form $\alpha$. The $b$-Reeb field defined by $\alpha$ admits infinitely many periodic orbits on the critical set.
\end{corollary}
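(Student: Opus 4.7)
The plan is to combine Proposition \ref{prop:codim2contactleaf} with Taubes' proof of the Weinstein conjecture in dimension three. Since $M$ has dimension five, the critical set $Z$ has dimension four and the codimension two contact submanifolds $W_\delta\subset Z$ produced by Proposition \ref{prop:codim2contactleaf} are three-dimensional manifolds carrying a genuine contact form $\beta_\delta=i_{W_\delta}^*\alpha$. The key observation is that the $b$-Reeb field of $\alpha$ is tangent to each $W_\delta$ and, restricted to $W_\delta$, coincides with a positive reparametrization of the Reeb vector field $R_{\beta_\delta}$.

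First I would fix a neighborhood $U\cong Z\times(-\varepsilon,\varepsilon)$ of $Z$ in which $\alpha=u\tfrac{dz}{z}+\beta$ and look at the function $i^*u\in C^\infty(Z)$, where $i:Z\hookrightarrow U$ denotes the inclusion. By the argument in the proof of Proposition \ref{prop:codim2contactleaf}, $d(i^*u)$ does not vanish on $W=\{i^*u=0\}$, and this nondegeneracy is an open condition. Hence there exists $\delta_0>0$ such that for every $\delta\in(-\delta_0,\delta_0)$ the set $W_\delta=\{i^*u=\delta\}$ is a regular level set of $i^*u$ on $Z$. Since $Z$ is a closed hypersurface of the closed manifold $M$, each $W_\delta$ is a closed three-manifold, and $\beta_\delta=i_{W_\delta}^*\alpha$ is a contact form on it.

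Next I would apply the Weinstein conjecture in dimension three, proved by Taubes \cite{Tau}, to each closed contact three-manifold $(W_\delta,\beta_\delta)$ with $\delta\in(-\delta_0,\delta_0)$. This yields at least one periodic orbit $\gamma_\delta$ of the Reeb vector field $R_{\beta_\delta}$. By Proposition \ref{prop:codim2contactleaf}, $R_{\beta_\delta}$ is a positive rescaling of $R_\alpha|_{W_\delta}$, so $\gamma_\delta$ is (the image of) a periodic orbit of the $b$-Reeb field $R_\alpha$ contained in $W_\delta\subset Z$.

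Finally, I would conclude by noting that the submanifolds $\{W_\delta\}_{\delta\in(-\delta_0,\delta_0)}$ are pairwise disjoint, since they are distinct level sets of the same function. Consequently, the orbits $\gamma_\delta$ obtained for distinct values of $\delta$ are pairwise distinct closed curves in $Z$, producing uncountably many, and in particular infinitely many, periodic orbits of $R_\alpha$ on the critical set. There is no real obstacle in this argument beyond verifying that the family $W_\delta$ stays regular and closed for a nonempty interval of values of $\delta$, which is immediate from the contact condition encoded in Equation (\ref{eq:bcontact}) together with compactness of $Z$.
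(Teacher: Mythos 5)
Your argument is correct and is essentially the paper's own proof: the paper derives the corollary in one line from Proposition \ref{prop:codim2contactleaf} (the one-parameter family of closed contact $3$-manifolds $W_\delta\subset Z$ to which the $b$-Reeb field is tangent, restricting there to a reparametrized Reeb field) together with Taubes' theorem, exactly as you do. Your additional remarks on the regularity and disjointness of the level sets $W_\delta$ are the right details and pose no problem.
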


\bibliographystyle{alpha}

\end{document}